\documentclass[12pt]{amsart}
\usepackage{amsmath,amsxtra} 
\usepackage{amssymb,amsthm}
\usepackage{latexsym}
\usepackage{enumitem}

\usepackage{graphicx}
\usepackage{psfrag,color,float}


 \usepackage{mathrsfs}

\newtheorem{theorem}[subsection]{Theorem}
\newtheorem{prop}[subsection]{Proposition}
\newtheorem{lemma}[subsection]{Lemma}
\newtheorem{corollary}[subsection]{Corollary}

\newcommand{\n}{\newline}

\theoremstyle{remark}
\newtheorem{remark}{Remark} 

\theoremstyle{example}
\newtheorem{example}{Example} 

\theoremstyle{question}

\DeclareMathOperator*{\slim}{s\mbox{-}l.i.m.}
\DeclareMathOperator*{\mylim}{\mbox{}l.i.m.} 
\DeclareMathOperator*{\sslim}{s-lim} 

\newcommand{\norm}[1]{ \left\|  #1 \right\| }


\definecolor{orange}{rgb}{0.995, 0.75, 0.35}
\definecolor{purple}{rgb}{0.7, 0.2, 0.5}
\definecolor{royalblue}{rgb}{0.2, 0.7, 0.8}
\definecolor{darkgreen}{rgb}{0.2,0.725,0.25}



\newcommand{\cal}{\mathcal}

\newcommand{\R}{\mathbb{R}}
\newcommand{\C}{\mathbb{C}}

\def\fsh{f^\sharp}
\def\cF{{\cal F}}

\def\cV{\mathcal{V}}

 \global\long\def\Fo{\operatorname{\mathscr{F}}}

\def\ch{\cosh}
\def\th{\tanh}
\def\sech{\mathrm{sech}}

\def\al{\alpha}
\def\de{\delta}
\def\eps{\epsilon}
\def\lam{\lambda}
\def\veps{\varepsilon}
\def\vphi{\varphi}

\def\De{\Delta}

\def\Om{\Omega}
\newcommand{\ran}{\mathrm{Ran}\,}


\def\cB{\mathcal{B}}
\def\cD{\mathcal{D}}

\def\cE{\mathcal{E}}

\def\cM{\mathcal{M}}

\def\sH{\mathscr{H}}

\def\iy{\infty}
\def\inv{^{-1}}

\def\supp{\mathrm{supp}}

\newcommand{\gv}[1]{\pmb{#1}}

\newcommand{\gvsig}{\gv{\sigma}}



\newcommand{\la}{\langle}
\newcommand{\ra}{\rangle}







\def\sideremark#1{\ifvmode\leavevmode\fi\vadjust{\vbox to0pt{\vss
 \hbox to 0pt{\hskip\hsize\hskip1em
\vbox{\hsize2cm\tiny\raggedright\pretolerance10000
 \noindent #1\hfill}\hss}\vbox to8pt{\vfil}\vss}}}%

                                                   %

{\end{list}}

\begin{document}

\title[1D Schr\"odinger Operators]%
      {Perturbed Fourier transform associated with  Schr\"odinger Operators} 
\author[Shijun Zheng]%
       {Shijun Zheng}
\address[S. Zheng]{Department of Mathematical Sciences\\
        Georgia Southern University, GA 30460}
\email{szheng@GeorgiaSouthern.edu}
\date{November 30, 2022}
\keywords{spectral theory, Schr\"odinger operator, scattering}  
\subjclass[2020]{Primary: 35J10, 35P10; Secondary: 35P25, 42B37, 47A40}   

\begin{abstract}
We give an exposition on  the $L^2$ theory of the perturbed Fourier transform associated with a
Schr\"odinger operator $H=-d^2/dx^2 +V$ on the real line,  
where $V$ is a real-valued  \mbox{finite} measure.  
In the case $V\in L^1\cap L^2$, we explicitly 
define the perturbed Fourier transform $\mathcal{F}$ for $H$ and obtain an 
eigenfunction expansion theorem for square integrable functions. 
This provides a complete proof of the inversion formula  for $\cF$ 
that covers the class of short range potentials in   
$(1+|x|)^{-\frac12-\eps} L^2 $. 
 Such paradigm has  applications  in the study of scattering problems
 in connection with the spectral properties  
 and asymptotic completeness of the wave operators. 
\end{abstract}

\maketitle 

\setcounter{tocdepth}{1} 
\tableofcontents

\section*{List of Symbols} \bigskip

\begin{tabular}{cp{0.806\textwidth}}
$\R_*$ & $\R_*=\R\setminus \{0\}$ \\
$L^1_k$      &      $L^1_k=\{\text{$f$ real-valued}: \int_\R(1+|x|^k) |f| dx<\iy\}$ \\ 
$\cM_k$ &  $\cM_k=\{ \mu \ \text{finite measure}:  \int_\R (1+|x|^k) |\mu| (dx)<\iy\}$\\
$H_0$    &    $H_0=\text{closure of   ${-}d^2/dx^2$} $\\
$H_V$  &  $H_V=H_0+V$, Schr\"odinger operator with a potential\\ 
$\slim$ & {strong limit in $L^2$}    \\  
$R_0(z)$ &      $R_0(z)=(H_0-z)\inv$, resolvent for the laplacian\\
$R(z)$ &    $R(z)=(H-z)\inv$,    resolvent for the operator $H$ \\ 
$J_z^\pm$ & Jost functions \\
$\vphi_k^\pm(x)$  & scattering solutions  \\
  $ e_\pm(x,\xi)$ & solutions to  Lippmann-Schwinger equation\\
  $\Fo_\pm$ &   perturbed Fourier transforms   \\
$\Fo^*_\pm $ &   adjoints to the perturbed Fourier transforms \\
$W_{\pm}$ & $W_\pm=s \mbox{-}\lim_{t\rightarrow \pm\infty} e^{itH} e^{-itH_0}$ wave operators\\
 \end{tabular}

 \bigskip

\section{Introduction} 
Consider the Schr\"odinger operator $H=H_0 +V$ on $\mathbb{R}$,
where $H_0$ is the selfadjoint extension of $-d^2/dx^2$ in $L^2(\R)$, and the potential  $V$ is a real-valued distribution. 
In Fourier analysis, it is well-known that a square integrable function admits 
an expansion with the exponentials as eigenfunctions of $H_0$. A natural question is: Can we represent an $L^2$ function in terms of
``eigenfunctions'' of $H=H_V$, a perturbation of the Laplacian?     
Eigenfunction expansion problems have a long history in connections with spectral  and
scattering theory arising in analysis of mathematical physics. 

If $V$: $\R\to \R$ is in $L^1\cap L^2$ 
or if $V$ is   measure-valued satisfying  
$\int_\R (1+x^2) d\mu_{|V|} < \infty$, 
(here and elsewhere $d\mu_V:=V(dx) $ denotes a real Borel measure $V$ on the line),
then it is known that (\cite{Ch01}, \cite{GH98}, \cite{Z04a}) 
the essential spectra of $H$ and $H_0 $ coincide and there is no
singular continuous spectrum; the wave operators $W_{\pm}=
s-\lim_{t\rightarrow \pm \infty}e^{ i t H } e^{-i t H_0}$ exist and are
complete.   
In this paper,  we  first give an expository  in \S \ref{s:preliminV} to \S \ref{s:perturb:F} on the perturbed eigenfunction expansion 
associated to $H$, where  $V$ is a real-valued finite (Borel) measure on the line satisfying  
\begin{align}\label{eV:wei2}
\int_\R (1+x^2) |V| (dx)<\iy\,,
\end{align}
based on  \cite{Bra85,GlowKu2020,GH98,Herc89,KuZn17,Tip90,Ti46,Zheng2010i}.
The second part of the paper (\S \ref{s:L2} to \S \ref{s:main:proof}) 
is denoted to treating the class of potentials 
 when $V$ is a real-valued function in  $L^1\cap L^2$, which is a more general condition than \eqref{eV:wei2} for integrable functions. 
 For instance, the class $(1+|x|)^{-\frac12-\eps} L^2\subset L^1\cap L^2$ but not included in 
 $\cM_2$ where $\cM_k:=\{ \mu \in \cM:  \int_\R (1+|x|^k) |\mu| (dx)<\iy\}$ and $\cM=\cM_0$ is the space of all finite Borel measures on $\R$. 
 
 For $V\in L^1\cap L^2$, we  prove an eigenfunction expansion theorem (Theorem \ref{th:Plancherel})  by means of  the perturbed Fourier transform for $H$. 
In particular, we give a complete proof of \cite[Theorem 1]{Z04a} based on scattering eigenfunction expansion method and the completeness property for $W_\pm$. Our approach is of
  time-dependent pattern  that is different than those taken up in \cite{Ch01,GH98}. 
 As a consequence, we obtain an explicit formula for the integral kernels of  spectral operators $\vphi(H)$  in Proposition \ref{c:vphi(H):ker}.  
Recently, the spectral  theory associated with $H$  {has} been further developed in the study of  harmonic analysis and PDE  \cite{BattyChen20,BZ2010,ChenPu2019,DanFan06,GSch04,OZh06,OZh08,Zheng2010i}. 


The proof of Theorem \ref{th:Plancherel} is mainly motivated by Ikebe's  treatment \cite{Ikebe60}  for the eigenfunction expansions in three dimensions.  
For $x,\xi\in\R$ the generalized eigenfunctions $e(x,\xi)$ of $H$ satisfy 
\begin{equation}
(-d^{\,2}/dx^2 + V(x)) e(x,\xi) = \xi^2 e(x,\xi) \label{S_eq} 
\end{equation} 
which are called {\em scattering eigenfunctions} that 
  obey the integral equation (\ref{LS_eq}) of Lippmann-Schwinger type. 
    This enables us to define  $\cal{F}f$, the perturbed Fourier transform associated with $H$. 
    A perturbative 
     approach  leads us to studying the properties for the Green's function (Lemma \ref{L:R(z)})
     and its Fourier transform (Lemma \ref{lem:F-Green}).
     Thus we are able to establish    
\begin{align}\label{Fab:plancherel}
\int_{\alpha\le k^2\le \beta} \vert \cal{F}f(k)\vert^2dk= \Vert P_{[\alpha,\beta]}(H)f \Vert_{2}^2\,
\end{align}
in Lemma \ref{lem:Pf}, which is one  main ingredient for the proof of Theorem \ref{th:Plancherel}, and where 
$P_E$ is the spectral projection operator for $H$
corresponding to  the set $E\subset \R$.   
 The generalized Fourier transform  $\cF f$ takes the form  of \eqref{cF:L2}, which  
coincides with $\Fo_+{f}$ from (\ref{Fo+:f}) when $V\in \cV_0=L^1_2\cap L^2$, 
see the remarks in the end of section \ref{s:preliminV}.  

Theorem \ref{th:Plancherel} is essentially equivalent to the Plancherel  formula \eqref{Fab:plancherel}, which shows that  
$\cF^*\cF=P_{ac}$, that is, an inversion formula on $\sH_{ac}$, the absolutely continuous subspace for $H$.  
Moreover, using wave operator method we  prove in Proposition \ref{Th:F-surj}  that $\cF$ is a bijective isometry from $\sH_{ac}$ onto $L^2$.
Thus   $\cF^*$ is its inverse from $L^2$ onto $\sH_{ac}$ and it follows that $\cF\cF^*=I$ on $L^2$, see Proposition \ref{c:wave:F*F0}.  
These are the complete $L^2$ theory for  $\cF$ and its adjoint $\cF^*$.   
Our treatment in sections \ref{s:L2} to \ref{s:Om:F:surj} is based on a delicate improvement of the analogous results for a bounded decaying potential \cite{Epp}.
The  $L^2$ theory above 
 was originally proven in three dimensions 
\cite{Ikebe60,Povzner53}
and later  extended  to  higher dimensions \cite{Thoe67} as well as abstract setting \cite{Kato66,Kuroda67,RS}. 

As a result of Theorem \ref{th:Plancherel} and Proposition \ref{c:wave:F*F0}, we  obtain 
a kernel representation   $K(x,y):=\vphi(H)(x,y)$ for  a continuous and compactly supported function $\vphi$ 
 in Propositions \ref{c:vphi(H):ker} and  \ref{p:phi(H)(xy)},  %
which is simpler  than those found in e.g.,  
 \cite{EcKMTe2014,GH98,Kuroda67,PanU16}.   
Such kernel expression has been useful in the study of  theory of function spaces associated to $H$ \cite{BZ2010,OZh06,OZh08,Zheng2010i}.

In section \ref{s:waveOp:scatter} we further consider the scattering problem for $H$. 
Denote $U_0(t):=e^{-itH_0}$  the propagator for the free hamiltonian
and  $U(t):=e^{-itH}$ for the perturbed hamiltonian.  
We prove the linear scattering  in 
Theorem \ref{th:Omega}, which states that the solution of (\ref{eU:H_V})  scatters in the space of absolutely continuous vectors:
\begin{align}\label{eU:H_V}
& i\, \partial_t \psi= H \psi,\quad\;  
\psi(0,x)=\phi(x) \in \mathcal{D}(H)\,.
\end{align}
Namely,  for any $\phi\in \sH_{ac}$\,, there exists $\phi_+$ in $L^2$ such that 
\begin{align}
 &U(t)\phi =  U_0(t) \phi_+  +o(1)\quad in \  L^2  \quad as \ t\to +\iy .
\end{align}
Moreover, we obtain in \eqref{U0:scatter:phi+} the approximation 
 $U(t)\phi (x)\approx \frac{1}{\sqrt{2it}} e^{i \frac{x^2}{4t}} \widehat{\phi_+}(\frac{x}{2t})$  in $L^2$  as $t\to +\iy$.
This refines  the result in \cite{GH98} concerning the asymptotic behavior of the linear flow $\phi\mapsto \psi=U(t)\phi$. 
 Intuitively, it implies that the dynamics of the perturbed operator can be divided into two well-understood parts: Scattering states traveling to infinity in a way similar to the free evolution, and bound states which remain confined in a certain sense for all times.  
In the literature, one dimensional scattering results   can be found in  
\cite{BEKS94,Ch01,DT79,Herc89,Tip90} for selfadjoint hamiltonians, 
and \cite{GlowKu2020} for non-selfadjoint ones. 
Scattering problems in multi-dimensions have been considered in e.g., \cite{Ag75,AS71,Enss78, 
Kuroda67,Nenciu75,RS,Teschl2014b}.  
 The methods employed in the study of the above-mentioned  linear perturbation theory  
have also found applications in  long time asymptotics for nonlinear  problems, see 
\cite{GerPuZhang22} and the references therein.  


The structure of this paper is the following. 
In \S \ref{s:preliminV}-\S \ref{s:perturb:F} we  briefly review the spectral theory based on the scattering eigenfunctions 
(technically Jost functions)  
for $H=H_V:=H_0+V$, where  $V$ is a real Borel measure satisfying (\ref{eV:wei2}). 
In  \S \ref{s:L2}-\S \ref{s:main:proof}, using resolvent  calculus 
we establish the $L^2$ theory for
the perturbed Fourier transform $\cal{F}$ associated with $H=H_V$, where $V$ is in $L^1\cap L^2$
so that $H$ has the operator domain $\cD(H)=W^{2,2}$. 
We follow the ideas of  \cite{Ikebe60}, but  also use some
simplifications as given in \cite[vol. III]{RS} and  \cite{Sim71}. 
The work of Ikebe is based on computing the spectral measure for the positive spectrum of $H$, 
in terms of solutions of the Lippmann-Schwinger  equation. This will be a main line of proof in our approach.   

Time-dependent scatting theory is conventionally based on 
the comparison between one-parameter unitary groups $U(t)$ and $U_0(t)$, i.e., the perturbed and unperturbed
evolutions for the hamiltonians. Regarding the development of perturbed Fourier transforms, 
 the  proofs in \cite{GH98} and our work both rely on the existence and completeness of 
the wave operators \eqref{e:Omega-f}. 
However, when $V$ is in $L^1\cap L^2$, our spectral calculus approach yields  some simpler and more  concrete formulae.  
For example,  equation (\ref{surjection_equation}) shows the relation
$\cF f=\cF_0{\Omega^* f}  $  
 between  $\cF$,  $\cF_0$ and the wave operator.   
 The identity (\ref{L2:Pf:Ff}) reveals the exact spectral correlation between 
$\cF$ and the spectral projection $P$ over any interval in the positive half line, 
which  seems new  comparing  \cite{GH98}.  





The remaining of the paper is organized as follows. In section \ref{s:preliminV} we give a brief review on the spectral theory for $H$, where $V$ is assumed as (\ref{eV:wei2}).
Some more elaborations are given pertaining to  the method used to construct scattering eigenfunctions. 
In sections \ref{s:jost} and \ref{s:perturb:F} we recall the definitions of the Jost functions and scattering solutions for $H$.
Then we define the perturbed Fourier transforms $\Fo_\pm$  as  \cite{GH98}. 
In sections \ref{s:L2} to \ref{s:main:proof}, assuming $V\in L^1\cap L^2$,  
we prove Theorem \ref{th:Plancherel} based on the construction of the solutions to (\ref{LS_eq}).   
In section \ref{s:waveOp:scatter} we conclude the paper by showing a general scattering result for $V$ satisfying either (\ref{eV:wei2}) or $V\in L^1\cap L^2$
(Theorem \ref{t:wave:scatter} and Theorem \ref{th:Omega}) 
for the linear evolution operator $U(t)$. 

\section{Preliminaries on the Schr\"odinger operator}\label{s:preliminV}

Suppose $H = - \De +V$ is a Schr\"{o}dinger operator on the real line, where $V$
is a real-valued finite measure  satisfying the short-range condition (\ref{eV:wei2}).
$H$ is the  hamiltonian in the corresponding time-dependent
Schr\"{o}dinger equation \eqref{eU:H_V}, 
 which describes the evolution of quantum wave  packets under the action of $H$. 
The solution is uniquely determined by the initial state $\psi(t,x)=e^{-i t H }\phi(x)$,  \mbox{$\;t\in \R$.}   
 Scattering theory deals  with the wave function $\psi$, whose behavior in the past and in the
future are governed by the  equation (\ref{eU:H_V}).

If assuming $\psi_E (t,x)= u_E(x)e^{-iEt}$ {in (4)}, where \mbox{$E$} is the energy, we arrive at  the time-independent Schr\"odinger equation 
\begin{equation}\label{eU:VE}
 (-\De +V) u_E(x) = E u_E(x)\,
\end{equation}
 which is a typical model for the stationary  waves $u_E$ in an inhomogeneous medium. 
An interpretation is that  $H_0=-\De=-d^2/dx^2 $ describes the behavior of a free particle, 
while $H_0 +V$ describes one particle interacting with an \mbox{external} field of forces $V(x)$. 
In scattering experiment, the  potential $V=V_+-V_-$ is composed of repulsive and attractive components $V_\pm\ge 0$. The negative part $V_-$ may introduce the possibility of localized disturbances, called {\em bound states}, in addition to the scattering solutions of (\ref{eU:VE}).

It is well-known that under the condition (\ref{eV:wei2}) on $V$, a finite measure, 
   $H$ is defined as a selfadjoint operator by quadratic form with form domain $Q(H)$  \cite{Bra85,Herc89,BEKS94,GH98}, 
also  consult \cite{Sim71,Sim82}
 and  \cite[III, p.499]{RS}, \cite{Ch01,Kato66} for the case where $V$ are locally integrable functions.  
The form operator has spectral resolutions $H= \int \lam dE_\lam$ in $\sH=L^2$, where $dE_\lam$ is the associated spectral measure.   
Let $\sigma(H)$ denote the spectrum of $H$.  Then $\sigma(H)\subset \R$ consists of $\sigma(H)=\sigma_{ac}(H)\cup \sigma_{pp}(H)$ with  the singular continuous spectrum 
$\sigma_{sc}(H)$ being empty.  
There holds the {\em spectral decomposition} 
  $ \sH= \sH_{ac} \oplus \sH_{sc} \oplus \sH_{pp}$, where $\sH_{ac}$  denotes the subspace of absolutely continuous vectors, 
  $\sH_{sc}$ the subspace of singular continuous vectors, and $\sH_{pp}$ the subspace of eigen-vectors of $H$.


\subsection{Finite measures and their topology} 
Let $\cD=C_0^\iy(\R)$ be the space of test functions,
that is, the class of $C^\iy$-smooth functions with compact support. 
Let $\cD'$ be the dual space of $\cD$, which is called the space of {\em distributions}. 
The weak topology on $\cD'$ is given by the convergence such that $f_n \to f$
in $\cD'$ is defined as: $\la f_n, \phi\ra\to \la f, \phi\ra$ for all $\phi$ in $\cD$.
Let  $C_b^0=C_b^0(\R)$ be the Banach space of continuous bounded functions  endowed with the  norm $\norm{\cdot}_\iy$.
Denote $\cM$ the dual of $C_b^0$, that is, the space of 
finite 
measures 
on $\R$. By convention, if $\mu\in \cM$, then
the pairing on $\cM\times C^0_b$ is given by
\[
\la \mu, \phi\ra=\int \phi(x) d\mu=\int \phi(x) \mu(dx),\qquad \forall \phi\in C_b^0
\]
where the integral is evaluated as the Lebesgue integral of $\phi$ with respect to  
$\mu$ in $\cM\subset\cD'$.

\subsection{The hamiltonian $H:=H_V$ defined as quadratic form}
Let  $W^{s,2}:=W^{s,2}(\R)$  be the space of distributions  
$W^{s,2}=\{f\in L^2:  D^s f\in L^2\} $ endowed with the norm 
$\norm{f}_{W^{s,2}}= \left(\int (1+ |\xi|^2)^{s} |\hat{f}|^2 d\xi\right)^{1/2} $, 
where $D^s f$ is the $s^{\textup{th}}$-weak derivative of $f$ in $\cD'$ and $\hat{f}$ is the usual Fourier transform 
\begin{equation}\label{F0:fourier}
\hat{f}(\xi) = \frac1{\sqrt{2\pi}} \int_{-\infty}^\infty f(x) e^{-ix\xi}dx\,.  
\end{equation}
Let $V=d\mu_V=V(dx)$ be a real-valued finite Borel measure in $\cM$.  Let $H_0$
be the selfadjoint extension of $-\De=-d^2/dx^2$ in $L^2$.
 The operator sum $H=H_V:=H_0+V$ is in general not defined on $\cD(H_0)=W^{2,2}$.
One can define $H$ in the sense of quadratic forms 
\begin{align}\label{quadratic:H_V}
\la  H_Vf, \phi\ra=\int f'(x){\phi'(x)}   dx+
\int  f(x){\phi}\, V(dx) 
\end{align}
for all $\phi\in W^{1,2}$ and $f\in Q(H)$ \cite{Bra85,Sim71}. 
The form domain $Q(H)$ consists of all functions $f$ in $W^{1,2}$ 
such that the distribution 
$H_Vf=(-\De +V) f\in L^2\cap W^{-1,2}=L^2$. 
The eigenfunctions of $H$ are defined as the set of $f$ in $W^{1,2}$  for which 
\begin{align}\label{eigenfunction:Hv}
(-\De +V -\lam) f=0\quad \text{for some  $\lam\in\R$}\,. 
\end{align}
Note that if $V\in L^2+L^\iy$, then the form domain coincides with the operator domain 
$Q(H)=\cD(H)=W^{2,2}$. Accordingly, the eigenfunctions of $H$ belong {to} $W^{2,2}$. 

\subsection{Green's function and Jost functions}  
Assume $V$ is a  finite measure verifying (\ref{eV:wei2}).     
The Green's function for $H=H_V$ is a fundamental solution of  
\begin{align*}
 (-\De +V-\zeta) G(\cdot, y,\zeta)=\delta(\cdot-y)
\end{align*} 
in the  distributional sense.  Let $R(\zeta)=(H-\zeta)^{-1}$ be the resolvent of $H$.
Let $\zeta=\kappa^2$, $\Im \kappa>0$. 
Then its kernel, the Green's function, is given by 
\begin{align}\label{green:Hv}
G(x,y,\zeta)= \frac{J_\kappa^+(\max(x,y))J_\kappa^-(\min(x,y))}{W(J_\kappa^+,J_\kappa^-)}
\end{align}
 where  $J_\kappa^\pm$ are {\em Jost functions}, see \S \ref{s:jost} or \cite[Theorem 7.1]{GH98}.  
 $G(x,y,\zeta)$ is also analytic on  the negative real axis with only finite number of poles that are zeros of the
  Wronskian $W(J_\kappa^+,J_\kappa^-)=-2i\kappa\al_\kappa$, see  (\ref{W:alpha(z)}). 

\subsection{Scattering solutions} 
Assume $V$ is a real Borel measure  satisfies the short-range condition  \eqref{eV:wei2}. 
 In order to define the Fourier transforms $\Fo_\pm$ associated with $H$ one seeks 
the generalized eigenfunctions of (\ref{S_eq}), or in the complex domain  
\begin{align}\label{eU:Vphi}
(\De +z^2)\vphi^\pm_z = V\vphi^\pm_z \qquad \text{in  $ \cD'(\R)$}
\end{align}  
with the asymptotics
\begin{align}\label{asymp:vphi:z} 
&\vphi^\pm_z(x) \rightarrow \left\{
\begin{array}{ll} 
t_z e^{\pm izx} & x \rightarrow \pm\infty\\
 e^{\pm izx} + r_z^\pm e^{\mp izx} & x\rightarrow \mp\infty
\end{array}\right. 
\end{align} 
where  $t_z$ and $r_z^\pm$ are the transmission and reflection coefficients (\ref{e:tzV}), (\ref{rz:V}). 
In the physics domain $\Im z\ge 0$, $\vphi^\pm_z(x)$ are interpreted as {\em incoming waves} from the left ($x\sim -\iy$) and from the right respectively $(x\sim +\iy)$. 

Under the assumption (\ref{eV:wei2}) on the potential,
Gu\'erin and  Holschneider \cite{GH98} 
were able to solve (\ref{eU:Vphi}) and define the {\em scattering solutions} $\vphi^\pm_z(x)= t_z J_z^\pm(x)$ 
by developing the tools of Jost functions based on the works in  \cite{Bra85,DT79,Herc89,Tip90}. 
This allows to   derive a formula for the spectral measure 
and  compute the time evolution generated by  $H$.  
For $\Im z\ge 0$, $\vphi_z^\pm$ 
also verify the integral equation 
\begin{align}\label{phi:V:scatter}
\vphi_z^\pm(x)= e^{\pm izx} +\frac{1}{2iz} \int e^{iz|x-y|} \vphi^\pm_z(y)V(dy)   
\end{align}
as {\em scattering solutions} of $H$  
that are continuous   in $x,z$ 
and locally  bounded in $x,k$ if $z=k$ are real, see Proposition \ref{tr:vphi_z}. 
Consequently, the perturbed Fourier transforms $\Fo_\pm$ and their adjoints $\Fo^*_\pm$
can be defined by (\ref{Fo+:f})-(\ref{eFo*-g}) using the solutions $\psi^\pm(x,k)$ to the Lippmann-Schwinger equation (\ref{psiV:L-S}), 
 where the functions $\psi^\pm(x,k)$ are defined in (\ref{psi(k)V:L-S}) via  $\vphi_k^\pm(x)=\vphi^\pm(x,k)$.

The wave operators $W_{\pm}=s {-\lim}_{t\rightarrow \pm\infty} e^{itH} e^{-itH_0}$ is said to 
exist and be complete ({\em asymptotically complete}) means that  $W_\pm$  exist as strong limits in 
$\sH=L^2$ and $\ran{W_+}=\ran{W_-}=\sH_{ac}$\,. 
As such, the existence and completeness of $W_\pm$ were {proved} in \cite{Herc89}. 
From these one can show $W_\mp=\Fo_\pm^* \Fo_0$ in (\ref{eWav:pm}), where $\Fo_0$ is the unperturbed Fourier transform (\ref{F0:fourier}),
and derive an intertwining identity (\ref{eW:intertwine}) for spectral operators. 
In  Proposition \ref{HV:spec-measure}  
 we also recall  from \cite
{GH98} the  construction of the spectral measure for $H$ using $\vphi^\pm_k$. 
 The proof is based on Stone's formula (\ref{Eab:R(zeta)})  and  Green's function (\ref{green:Hv}). 

In sections \ref{s:L2} to \ref{s:main:proof},  assuming $V\in L^1\cap L^2$ we   prove the main theorem, namely,  Theorem \ref{th:Plancherel},  
to establish the $L^2$ theory via the $\cF$-transform and 
the spectral projections $P_{[a,b]} $ using the scattering eigenfunctions $e(x,\xi)$ of (\ref{LS_eq}).  
Here we summarize the comparison between this paper and \cite{GH98}  
  in  proving the $L^2$ theory for the perturbed Fourier transforms.  
\begin{enumerate}
\item[(a)] Our approach is mainly  functional calculus theoretical in solving for the continuum eigenfunctions $e(x,\xi)$, 
while  on the spectral domain, the approach in \cite{GH98} is more concentrated  on the analytical properties of $\vphi^\pm(x,z)=J_z^\pm(x)/\al_z$
defined by the Jost functions  (Propositoin \ref{tr:vphi_z}),
where  the values $z=k^2$, $k\in \R_*:=\R\setminus \{0\}$ correspond to the continuum spectrum, and
the eigenvalues in the pure point spectrum 
are the zeros of $\al_z=\al(z)$, or 
the poles of the transmission coefficients $t_z=t(z)$. 
Since our proof uses the Fredholm alternative theorem in Lemma \ref{lem:Fredholm},  
it gives rise to the ``exceptional set'' $\cE_0$ for the $\xi$ variable in Lemma \ref{l:e(xxi):E0}. 
\item[(b)] Both proofs require the existence and completeness of  the wave operators. 
It is worth mentioning that  $L^1\cap L^2$ includes the class $(1+|x|)^{-\frac12-\veps}L^2$.
The latter is a class of Agmon potentials and hence the wave operators are complete and the singular spectra are absent \cite[Theorem XIII.33]{RS}.

\item[(c)] Let $L^1_2:=\{\text{$f$ real-valued}: \int(1+x^2) |f| dx<\iy\}$. Then the class $L^1\cap L^2$ is not included in  $L^1_2$, 
e.g., the function $(1+|x|)^{-1-\veps}\in L^1\cap L^2$ but not in $ L^1_2$.  
If introducing the space $\mathcal{V}_0:=\{f\in L^1_2\cap L^2: \text{$f$ is real-valued}\}$, 
then, roughly speaking, the condition $V\in L^2$ concerns  the local integrability while $L^1_2$ concerns the weighted integrability near infinity. 
\item[(d)] Suppose $V\in \mathcal{V}_0$. 
Since any solution to  the integral equation (\ref{psiV:L-S}) 
must satisfy (\ref{eU:Vphi}) along with the asymptotics (\ref{a:psi:k+})-(\ref{a:psi:k-}),    
 $e(x,\xi)$ and $\psi^+(x,\xi)$ coincide 
 in view of (\ref{LS_eq}) and (\ref{psiV:L-S}).  
 It follows that $\cF=\Fo_+$. 
\item[(e)] The  eigenfunctions $e(x,\xi)$ can be used to compute the time 
evolution $U(t)\phi$  in light of  Theorem \ref{th:Plancherel}. 

\medskip
Let $V\in \mathcal{V}_0$.  
 Then $e(x,\xi)=\vphi^+(x,\xi)$, $\cF=\Fo_+$ and the set $\cE_0=\emptyset$ in Theorem \ref{th:Plancherel}. 
Note that for any bounded function $\vphi$ the  operator $\vphi(H)$ can be defined by the perturbed Fourier transform in virtue of \eqref{Eac:F*F}, 
\eqref{F*Fo:inversion} and 
the uniqueness of
the spectral theorem. 
Thus, for all $\phi\in \sH_{ac}$, we have 
 \begin{align}\label{eU(t):cF} 
 U(t)\phi(x)= \cF^* e^{-it \xi^2}\cF \phi(x) 
= \int_{\R^2}  e^{-it \xi^2} e(x,\xi) \bar{e}(y,\xi) \phi(y) d\xi dy 
\end{align} 
which is more concrete than the expression for $e^{-itH}$ in \cite[IX]{GH98}.   
\end{enumerate}  

\section{Jost functions and scattering solutions}\label{s:jost} 
Jost functions are  fundamental and useful tools in the study of spectral properties for $H=H_V$.  
Let $V$ be a real-valued short-range potential verifying (\ref{eV:wei2}).    
For $\Im z\ge 0$ the Jost functions $J_z^\pm $ are defined as the solutions of  
\begin{align}
&J^+_z(x)= e^{izx} + \int_x^\infty \frac{\sin (z(y-x))}{z}J^+_z(y)
V(dy)\label{eJ:+}\\
&J^-_z(x)= e^{-izx} + \int^x_{-\infty} \frac{\sin (z(x-y))}{z}J^-_z(y) V(dy). \label{eJ:-}
\end{align}
Then, we have the following properties. 
\begin{enumerate}
\item[(i)] 
$J^\pm_z$ solve the Helmholtz equation (\ref{eU:Vphi}).
\item[(ii)] There holds the symmetry $\overline{J^\pm_k}(x)=J^\pm_{-k}(x)$, $k\in \R_*$. 
\item[(iii)] The value of the Wronskian 
\begin{equation}\label{W:alpha(z)}
W[J_z^+, J_z^-](x)= -2iz \al_z\,. 
\end{equation} 
\item[(iv)]   The asymptotics hold 
\begin{equation}\label{eJz:asymp}
J_z^\pm(x) =\left\{
\begin{array}{ll}
e^{\pm izx}+O(x^{-2} e^{\mp i zx}), & x\rightarrow \pm\infty\\
\alpha_z^\pm e^{\pm izx} + \beta_z^\pm e^{\mp izx} +O(x^{-2} e^{\pm i z x } ), &
x\rightarrow \mp\infty\,.
\end{array}\right. 
\end{equation}
Here, for all $\Im z\ge 0$,  
 $\al_z:=\al_z^+=\al_z^-$ are given by  
\begin{align}
&\alpha_z^\pm =1-\frac{1}{2iz} \int j^\pm_z(y)V(dy), \,  \label{al(z):V}\\
&\beta_z^\pm = \frac1{2iz} \int e^{\pm 2 i zy} j_z^\pm(y)V(dy)\label{e:beta(z)} 
\end{align} 
and $j_z^\pm(x)=e^{\mp izx}J_z^\pm(x)$ are the modified Jost functions verifying 
\begin{equation}\label{jz:asym}
j_z^\pm(x) =\left\{
\begin{array}{ll}
1+O(x^{-2} e^{\mp 2i zx}), & x\rightarrow \pm\infty\\
\alpha_z^\pm  + \beta_z^\pm e^{\mp 2i zx} +O(x^{-2} ), & x\to \mp\infty\,.
\end{array}\right. 
\end{equation}
In addition, for all $k$ in $\R$ 
\begin{equation*}
\beta_k^+= -\overline{\beta_k^-},\quad  |\al_k|^2-|\beta_k^\pm|^2 = 1  .   
\end{equation*}
\end{enumerate}

The scattering solutions are defined as 
  $\vphi_z^\pm= J_z^\pm/\alpha_z$  for $\alpha_z \neq 0$. 
It follows that  for ${\Im} z\ge 0$, $\vphi_z^\pm$ solves equation \eqref{eU:Vphi}
  with the asymptotics (\ref{asymp:vphi:z}), which  is equivalent to  
the scattering  equation (\ref{phi:V:scatter}). 
This requires   
\begin{align}
& t_z=t_z^\pm = 1+\frac{1}{2iz} \int e^{\mp izy}\vphi_z^\pm(y)V(dy) \label{e:tzV}\\
&r_z^\pm= \frac{1}{2iz} \int e^{\pm izy}\vphi_z^\pm(y)V(dy)\,   \label{rz:V}
\end{align}
such that 
\begin{align}\label{t(z):r(z):alpha:beta}
&t_z =\alpha_z^{-1}\   and \  r_z^\pm= \beta_z^\pm/\alpha_z\,.
\end{align}

 In order to solve the  integral equation (\ref{phi:V:scatter}), a traditional procedure is to find $J_z^\pm$
using the modified Jost function $j_z$, the latter can be found using a series expansion based on suitable estimation of
iterated integrals. Then one defines $\vphi_z^\pm(x,z):=\vphi_z^\pm(x)= J_z^\pm(x)/\alpha_z$ as above.  
We recall from \cite[Theorem 6.1]{GH98} the following asymptotics and spectral properties for $\vphi_z^\pm$.  
\begin{prop}\label{tr:vphi_z}  Let $V$ satisfy the condition (\ref{eV:wei2}). Let $\Im z\ge 0$ and $\al_z\ne 0$. Then $\vphi_z^{\pm}$ have the following  properties.
\begin{enumerate}
\item[(a)] The functions $\vphi_z^\pm$ are solutions of (\ref{phi:V:scatter}). 
Equivalently, $\vphi_z^\pm$ solve (\ref{eU:Vphi}) with the  
assymptotics (\ref{asymp:vphi:z})  as $|x|\to\iy$, where
  $t_z =\alpha_z^{-1}$ and $r_z^\pm= \beta_z^\pm/\alpha_z$ as given in  (\ref{t(z):r(z):alpha:beta}). 
\item[(b)] If $V$ has compact support, then the above asymptotics (\ref{asymp:vphi:z}) are exact outside the support. 
\item[(c)] 
For fixed $x$, $\vphi_z^{\pm}(x)$ are meromorphic in ${\cal Im}z >0$,
with a {\em finite} number of poles located on the imaginary axis,
corresponding to the zeros of $\alpha_z$. Moreover, the functions
$\vphi_z^{\pm}(x)$ are continuous on the closed upper half-plane ${\Im}z \ge 0$ and continuously differentiable on ${\Im}z \ge 0,
z\neq 0$.  The same analyticities hold for the transmission and reflection coefficients. 
\item[(d)] On the real $k$ axis, $(\min \{ \frac1{|k|}, 1+|x|\} )^{-1}\vphi_k^{\pm}(x)$ and
$(1+|x|)^{-1} {\dot{\vphi}}_k^{\pm}(x)$ with respect to $k$ are 
uniformly bounded.  In addition,  
\begin{align}\label{ePhi:k}
 \overline{\vphi_k^{\pm}  }=\vphi_{-k}^{\pm}, \;\; \bar{t}_k= t_{-k},\;\; 
\overline{ r_k^{\pm }} = r_{-k}^\pm
 \end{align} 
 which implies for $\al_k\ne 0$   
 \begin{align}
 & \overline{\al_k}= \al_{-k},\;\; 
\overline{ \beta_k^\pm } = \beta_{-k}^\pm
 \end{align}
and \begin{align} 
r_k^+ \bar{t}_k + t_k \overline{r_k^-} =0, \;\;  |t_k|^2 +|r_k^\pm |^2=1.\label{rt:k}
\end{align} 
\end{enumerate} 
\end{prop}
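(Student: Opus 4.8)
\emph{Proof plan.} The plan is to deduce every assertion from properties of the Jost functions $J_z^\pm$ defined by the Volterra equations \eqref{eJ:+}--\eqref{eJ:-}, and then to transfer them to $\vphi_z^\pm=J_z^\pm/\al_z$ via the Wronskian identity $\al_z=-W[J_z^+,J_z^-]/(2iz)$ of \eqref{W:alpha(z)}.

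First I would construct $J_z^\pm$ by successive approximations. Setting $j_z^\pm(x)=e^{\mp izx}J_z^\pm(x)$ recasts \eqref{eJ:+}--\eqref{eJ:-} as Volterra equations whose kernels equal $(e^{\pm 2iz(y-x)}-1)/(\pm 2iz)$, and for $\Im z\ge 0$ (so that, on the relevant half-line $y\gtrless x$, $e^{\pm 2iz(y-x)}$ stays bounded by $1$) each kernel is dominated by $\min\{\,|y-x|,\ |z|^{-1}\,\}\le|y-x|$. The standard ordered-integral (Volterra) estimate then makes the Neumann series for $j_z^\pm$ converge, with bounds controlled by $\exp\!\big(\int_\R(1+|y|)\,|V|(dy)\big)$, uniformly on compact subsets of $\{\Im z\ge0\}$ and --- retaining the truncation $\min\{|y-x|,|k|^{-1}\}$ --- uniformly in $k\in\R_*$; since the iterates are analytic in $z$, this yields analyticity of $j_z^\pm$ on $\Im z>0$, continuity up to $\Im z\ge0$, and continuous differentiability for $z\ne0$ (the factor $1/z$ being the only obstruction at the origin). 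Reading off the tail of the integral equation and using the quadratic moment in \eqref{eV:wei2} to bound the remainders gives the asymptotics \eqref{eJz:asymp}--\eqref{jz:asym}; evaluating $W[J_z^+,J_z^-]$ as $x\to+\infty$ and $x\to-\infty$ from these asymptotics yields \eqref{W:alpha(z)} together with $\al_z^+=\al_z^-$ and the formulas \eqref{al(z):V}--\eqref{e:beta(z)}; and conjugating the equations --- legitimate since $V$ is real --- gives property (ii). This settles (i)--(iii).

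Next I would pass to $\vphi_z^\pm=J_z^\pm/\al_z$. From \eqref{al(z):V}, $\al_z$ is analytic on $\Im z>0$, continuous on $\Im z\ge0$ away from $z=0$, and tends to $1$ as $|z|\to\infty$; on $\R_*$ the relation $|\al_k|^2=1+|\beta_k^\pm|^2\ge1$ forces $\al_k\ne0$; and a zero $z_0$ of $\al_z$ in the open upper half-plane makes $W[J_{z_0}^+,J_{z_0}^-]=0$, so $J_{z_0}^+$ and $J_{z_0}^-$ are proportional and define an $L^2$ eigenfunction of $H$ with eigenvalue $z_0^2$; self-adjointness forces $z_0^2\in\R$, hence $z_0^2<0$ and $z_0\in i(0,\infty)$. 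These zeros are isolated by analyticity, and together with $\al_z\to1$ at infinity and a low-energy ($z\to0$) expansion of $\al_z$ they are finite in number --- this is (c). That $\vphi_z^\pm$ solves \eqref{eU:Vphi} with asymptotics \eqref{asymp:vphi:z}, $t_z=\al_z^{-1}$, $r_z^\pm=\beta_z^\pm/\al_z$ as in \eqref{t(z):r(z):alpha:beta}, is inherited from $J_z^\pm$; equivalence with the scattering equation \eqref{phi:V:scatter} follows by convolving \eqref{eU:Vphi} against the outgoing fundamental solution $e^{iz|x|}/(2iz)$ of $\De+z^2$ and invoking uniqueness of the solution with the prescribed radiation behavior, giving (a). For (b), if $\supp V$ is compact the integrals in \eqref{eJ:+}--\eqref{eJ:-} terminate outside $\supp V$, so there the error terms vanish and \eqref{asymp:vphi:z} is exact. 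Finally, in (d) the uniform bounds are exactly the uniform-in-$k$ Neumann estimates above, the symmetries \eqref{ePhi:k} come from conjugating \eqref{phi:V:scatter} and the defining formulas \eqref{e:tzV}--\eqref{rz:V}, and the unitarity relations \eqref{rt:k} follow by evaluating the $x$-independent Wronskians $W[\vphi_k^+,\vphi_k^-]$ and $W[\vphi_k^\pm,\overline{\vphi_k^\mp}]$ as $x\to\pm\infty$ using the leading-order asymptotics and matching coefficients.

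The main obstacle is the pole structure in (c): ruling out accumulation of the zeros of $\al_z$, the delicate point being the behavior near $z=0$, where the factor $1/z$ in the kernel competes with the truncation $\min\{|y-x|,|z|^{-1}\}$. This is precisely where the quadratic weight in \eqref{eV:wei2} is needed, both to justify the low-energy expansion of $\al_z$ and to produce the $\min\{1/|k|,\,1+|x|\}$-weighted bound in part (d).
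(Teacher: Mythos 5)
Your proposal is correct and takes essentially the same approach as the paper: the paper does not prove Proposition \ref{tr:vphi_z} itself but recalls it from \cite[Theorem 6.1]{GH98}, describing exactly the ``traditional procedure'' you outline --- a Neumann-series construction of the modified Jost functions $j_z^\pm$ with the kernel bound $\min\{|y-x|,|z|^{-1}\}$, followed by passage to $\vphi_z^\pm=J_z^\pm/\al_z$ via the Wronskian \eqref{W:alpha(z)}. The delicate points you flag (non-accumulation of the zeros of $\al_z$ at $z=0$, and the $\min\{1/|k|,1+|x|\}$-weighted bound in part (d), where the quadratic moment in \eqref{eV:wei2} enters) are precisely those the paper singles out in Remark \ref{error:GH}.
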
 
\begin{remark}\label{error:GH}  
There was an error in Theorem 6.1 \cite{GH98} about the  uniform boundedness for $\vphi^\pm(x)$. 
The correct statement should be part (d) in the  proposition above. 
More precisely, one can  calculate  for all $\Im z\ge 0$ and all $x\in\R$: Let $z=\al+ i \eta$, $\eta> 0$, then for $x<0$ 
\begin{align*} 
&|e^{-i zx } \frac{\sin zx}{z} |= \frac{e^{\eta x}}{|z|}\sqrt{\ch^2(\eta x)-\cos^2(\al x)} \le \frac1{ |z|}\,.
\end{align*}   
 If $z=k \in \R_*$, then 
\begin{align}  
&|e^{-i zx } \frac{\sin zx}{z} |= \frac1{|z|}\vert\sin z x\vert 
\le  \begin{cases}  |x| &  if \ |zx|<1\notag\\
\frac1{|z|}&  if \ |zx|\ge 1 .
\end{cases}
\end{align} 
This implies that  in \cite[Theorem 5.1]{GH98}, it should read
$z\mapsto (1-G_z^\pm)\inv$ is analytic in $\Im z>0$ and continuous in $\{z:\Im z\ge 0, z\ne 0\}$. 
In addition, equation (5.10) therein should have a linear bound by $|x|$ 
 for the first integral  and a quadratic bound by $x^2$ for the second integral in the case $x$ being negative. 
In all scenario, we have 
\begin{align} 
& |\vphi^\pm_z( x)  |\le  C\min \{ 1+| x|, \frac1{|z|} \}  \, \qquad \forall x\in\R,  \Im z\ge 0, z\ne 0\,. 
\end{align}
See Example \ref{ex:deltaV} where $V$ is a multiple of the dirac measure $\de$.  
\end{remark}  

\begin{remark}\label{re:simple:eigen}  
   The pure point spectrum of $H$ consists of a finite set of negative numbers,
   which precisely  correspond to the (simple) zeros of $\al(z)$. 
Namely,    if $\al(z_j)=0$, then $z_j=i\kappa_j$, $\kappa_j>0$ for $j=1,\dots,J_0$ 
and $\lam_j:=z_j^2=-\kappa_j^2$  are the simple eigenvalues for $H=H_V$. 
This can be seen from either the asymptotics 
 (\ref{asymp:vphi:z})  for $\vphi_z^\pm(x)$,
or, the asymptotics (\ref{jz:asym}) for $j_z^\pm(x)$. Indeed, for the existence of bound state (outgoing),
it is necessary $\al(z)$ vanishes.
For $\lam_j= -\kappa_j^2$,  $j=1,\dots, J_0$, 
the associated eigenstates $\phi_j$ are solutions of (\ref{u:eigenstate}) in $Q(H)\subset W^{1,2}$
\begin{align}
H_V u= \lam_j u \label{u:eigenstate}
\end{align}
given by  $u(x)=\phi_j(x)=O(e^{ -\kappa_j |x|})$    having exponential decay as $|x|\to \iy$.
\end{remark}  

\begin{prop}[Spectral measure for $H_V$]\label{HV:spec-measure} Let $V$ satisfy the short-range condition (\ref{eV:wei2}).  
Then the positive spectrum of $H_V$ is purely absolutely continuous and the negative spectrum is composed of a finite number of negative eigenvalues 
\begin{align*}
\sigma_{ac}(H)=[0,\iy), \  \sigma_{sc}(H)=\emptyset, \ \sigma_{pp}(H)=\{-\kappa_1^2, \dots, -\kappa_{J_0}^2\} 
\end{align*}
where $i\kappa_1, \dots, i\kappa_{J_0}$ are the simple zeros of $\al(z)$ in the upper half plane $\Im z>0$.
Let $\{E_\lam\}$ denote the spectral resolution to $H=H_V=\int \lam dE_\lam$.  For any states $f,g$ in $\cD$ the associated spectral measure 
$\mu_{f,g}$ given by $d\mu_{f,g}(\lam)= d_\lam(E_\lam f, g)$  is absolutely continuous on the positive axis 
with density  
\begin{align*} 
\frac{d\mu_{f,g}(\lam)}{d\lam}=m_{f,g}(\lam)= \int_{\R^2}  \overline{g(y)} f(x) p(x,y,\lam)  dxdy
\end{align*}
where the kernel is given as 
\begin{align}\label{Hv:spectra-measure} 
& p(x, y, \lam)= \frac1{2\pi} \Re (\frac{\vphi^+_k(y)\vphi^-_k(x) }{ k t_k}   )
=  \frac1{2\pi} \Re (\frac{\vphi^+_k(x)\vphi^-_k(y) }{ k t_k}), \qquad \lam=k^2, k>0.
 \end{align}
\end{prop}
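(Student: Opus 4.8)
The plan is to compute the spectral measure $\mu_{f,g}$ via Stone's formula together with the explicit Green's function \eqref{green:Hv}, extracting the density on the positive axis by taking the boundary values of the resolvent kernel from above and below the cut $[0,\infty)$. First I would write, for $0<\alpha<\beta$ and $f,g\in\cD$,
\begin{align*}
\bigl(E_{[\alpha,\beta]}f,g\bigr)=\lim_{\veps\downarrow 0}\frac1{2\pi i}\int_\alpha^\beta \bigl(\bigl[R(\lam+i\veps)-R(\lam-i\veps)\bigr]f,g\bigr)\,d\lam,
\end{align*}
which is the form of Stone's formula referenced as \eqref{Eab:R(zeta)}. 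Since $R(\zeta)$ has kernel $G(x,y,\zeta)$ from \eqref{green:Hv}, the task reduces to identifying $\lim_{\veps\downarrow 0}\tfrac1{2\pi i}\bigl[G(x,y,\lam+i\veps)-G(x,y,\lam-i\veps)\bigr]$ for $\lam=k^2>0$, which will be $p(x,y,\lam)$; differentiating in $\lam$ and recognizing $d\mu_{f,g}/d\lam$ gives the claimed density $m_{f,g}(\lam)=\int_{\R^2}\overline{g(y)}f(x)p(x,y,\lam)\,dxdy$. That the resulting measure is purely a.c. on $(0,\infty)$ follows because these boundary values exist and are locally bounded in $\lam$ by the continuity/differentiability properties of $\vphi_z^\pm$ (and hence of $J_z^\pm$, $\al_z$) up to the real axis away from $0$, as recorded in Proposition \ref{tr:vphi_z}(c),(d); the finitely many eigenvalues sit on the negative axis where $\al_z$ vanishes, so they do not interfere.

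The computational heart is the boundary-value calculation for $G$. Set $\zeta=\kappa^2$ with $\kappa$ in the upper half-plane, so that $\kappa\to k$ as $\zeta\to k^2+i0$ and $\kappa\to -k$ as $\zeta\to k^2-i0$ (for $k>0$). Using $J_\kappa^\pm=\al_\kappa\vphi_\kappa^\pm$, the Wronskian identity \eqref{W:alpha(z)} gives $W(J_\kappa^+,J_\kappa^-)=-2i\kappa\al_\kappa$, and hence
\begin{align*}
G(x,y,\zeta)=\frac{\al_\kappa^2\,\vphi_\kappa^+(\max(x,y))\,\vphi_\kappa^-(\min(x,y))}{-2i\kappa\al_\kappa}=-\frac{\al_\kappa\,\vphi_\kappa^+(x_\vee)\,\vphi_\kappa^-(x_\wedge)}{2i\kappa},
\end{align*}
writing $x_\vee=\max(x,y)$, $x_\wedge=\min(x,y)$. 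Feeding in $t_\kappa=\al_\kappa^{-1}$ turns this into $-\vphi_\kappa^+(x_\vee)\vphi_\kappa^-(x_\wedge)/(2i\kappa t_\kappa)$. Then the jump across the cut is
\begin{align*}
\frac1{2\pi i}\bigl[G(x,y,k^2+i0)-G(x,y,k^2-i0)\bigr]
=-\frac1{4\pi^2 i}\left(\frac{\vphi_k^+(x_\vee)\vphi_k^-(x_\wedge)}{i k t_k}-\frac{\vphi_{-k}^+(x_\vee)\vphi_{-k}^-(x_\wedge)}{-i k t_{-k}}\right),
\end{align*}
and invoking the conjugation symmetries $\overline{\vphi_k^\pm}=\vphi_{-k}^\pm$ and $\bar t_k=t_{-k}$ from \eqref{ePhi:k} collapses the second term to the complex conjugate of the first, yielding exactly $\tfrac1{2\pi}\Re\bigl(\vphi_k^+(x_\vee)\vphi_k^-(x_\wedge)/(k t_k)\bigr)$. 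The two displayed forms of $p(x,y,\lam)$ in \eqref{Hv:spectra-measure} then agree because swapping $x\leftrightarrow y$ interchanges $x_\vee\leftrightarrow x_\wedge$ while $p$ is manifestly symmetric; alternatively one checks $\vphi_k^+(x_\vee)\vphi_k^-(x_\wedge)=\vphi_k^+(x)\vphi_k^-(y)$ when the real part is taken, using $t_k r_k^-+ \bar t_k^{-1}\cdots$—but cleanest is simply to note that both expressions are the two ways of writing the symmetric kernel. Finally, interchanging the $\lam$-integral with the $x,y$-integrals (justified by the local boundedness in $\lam$ near $(0,\infty)$ and the decay/integrability of $f,g\in\cD$ against the locally bounded $\vphi_k^\pm$, Proposition \ref{tr:vphi_z}(d)) and differentiating the resulting absolutely continuous function of $\beta$ identifies $m_{f,g}(\lam)$.

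The main obstacle I anticipate is the interchange-of-limits and Fubini justification: one must control $G(x,y,\lam\pm i\veps)$ uniformly enough in $\veps$ and locally uniformly in $\lam\in[\alpha,\beta]$ to pass the limit inside the $\lam$-integral (dominated convergence) and then separate the $x,y$ integration. This is exactly where the refined bounds of Remark \ref{error:GH} and Proposition \ref{tr:vphi_z}(d)—namely $|\vphi_z^\pm(x)|\le C\min\{1+|x|,1/|z|\}$ uniformly for $\Im z\ge 0$, $z\ne 0$—are essential: they give an $\veps$-independent majorant $|G(x,y,\lam\pm i\veps)|\le C_{\alpha,\beta}(1+|x|)(1+|y|)$ on $[\alpha,\beta]$, which when paired with the rapid decay of $f,g$ makes all the interchanges legitimate and simultaneously shows the spectral measure has no singular part on $(0,\infty)$. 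The only subtlety near $k=0$ is avoided since we work on $[\alpha,\beta]\subset(0,\infty)$ and then let $\alpha\downarrow 0$, $\beta\uparrow\infty$ at the end using that $\{0\}$ and the negative eigenvalues carry no absolutely continuous mass.
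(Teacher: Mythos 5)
Your proposal follows essentially the same route as the paper: the paper derives \eqref{Hv:spectra-measure} precisely from Stone's formula \eqref{Eab:R(zeta)} together with the Green's function \eqref{green:Hv}, the Wronskian identity \eqref{W:alpha(z)} and the relation $t_z=\al_z^{-1}$ (citing \cite{GH98} for the spectral statements), and you simply carry out that boundary-value computation across the cut in detail, correctly, including the use of the symmetries \eqref{ePhi:k}. The only blemish is a typo in your displayed jump formula, where the prefactor should be $-\tfrac{1}{4\pi i}$ rather than $-\tfrac{1}{4\pi^{2} i}$; your final expression $\tfrac{1}{2\pi}\Re\bigl(\vphi_k^+(x_\vee)\vphi_k^-(x_\wedge)/(k t_k)\bigr)$ is the correct one.
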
 

 The spectral density kernel (\ref{Hv:spectra-measure})  in the proposition was shown in \cite{GH98} 
and can be derived from the resolvent $R(\zeta)=(H-\zeta)\inv$ and the analyticity of  its kernel $G(x,y,\zeta)$ in (\ref{green:Hv}). 
In fact, if $[\al,\beta]\subset\R_+$, then Stone's formula on the spectral projections  reads  for  $0<\al<\beta$  
\begin{align}\label{Eab:R(zeta)} 
& P_{(\al,\beta)}= \sslim_{\eps\to 0}  \frac1{2\pi i} \int_\al^\beta ( R(\zeta+i\eps) -R(\zeta-i\eps) ) d\zeta  \,.
 \quad  
 \end{align} 
Therefore, equation (\ref{Hv:spectra-measure}) follows from (\ref{green:Hv}), (\ref{W:alpha(z)}) and (\ref{t(z):r(z):alpha:beta}). 

\bigskip
\begin{example}[P\"oschl-Teller potential \cite{OZh06} ]\label{ex:sech}   
Let $V(x)= -6 \,\sech^2 x $. Then the  fundamental solutions of (\ref{eU:Vphi}) are: For $\Im k\ge 0 $
\begin{align*}
&\vphi^+(x,k)= e^{ikx}\frac{1+k^2+3ik\th x-3\th^2 x }{(k+i)(k+2i)} \\
&\vphi^-(x,k)= e^{-ikx}\frac{1+k^2-3ik\th x-3\th^2 x }{(k+i)(k+2i)} \,.
\end{align*}
 In addition,  
$$
 \al(k)=\frac{(k-i)(k-2i) }{(k+i)(k+2i)}\, 
$$
so that $t_k=\frac{(k+i)(k+2i) }{(k-i)(k-2i)}$ and ${r_k^\pm}=0$.  
If $k=i$, one has the bound state $\vphi^+(x,i)=\frac12 \sech x\tanh x$.  
If $k=2i$,  $\vphi^+(x,2i)=\frac14 \sech^2x$.   
\end{example}


\begin{example}[$\de$ potential]\label{ex:deltaV}
 Let  $V(x)= 2b \delta (x)  $ with $b\in\R$.  Then the scattering solutions of 
$ u''+[k^2- 2b\delta(x)  ]u =0 $ are: For $\Im k\ge 0$
$$
\vphi^+(x,k)=\left\{
\begin{array}{ll}
e^{ikx}, & x\ge 0\\
e^{ikx}-\frac{2b}{k}\sin kx, & x<0
\end{array}\right.
$$

$$
\vphi^-(x,k)= \left\{
\begin{array}{ll}
e^{-ikx}+ \frac{2b}{k}\sin kx , & x\ge 0\\
e^{-ikx}, & x<0\,.
\end{array}\right.
$$
We have  $t_k=\frac{k }{ k+i b}$ and $ r^\pm_k= \frac{-ib}{ k+ib } $. 

There are {\em bound state} solutions  if $b<0$. 
Indeed 
from  $\al_k=t_k\inv$, 
  we see $ k=-ib $ is the zero of $\al_k$. Then 
$\vphi^+(x,-ib) =e^{b |x|} $,  
which is a localized  bound-state solution. 
\end{example}

Classical solvable examples for $H_V$ can be found in e.g., \cite{PandeyPathak78, Ti46} 
where singular boundary conditions are present.
Eigenfunction expansions for pair of adjoint differential operators reducible to the form  
$L:=\frac{d^2}{dx^2}+b(x)\frac{d}{dx}+c(x)$
and $L^*:=\frac{d}{dx}( \frac{d}{dx}-b(x)) +c(x)$ on an interval 
with singular coefficients $b$  were studied earlier in \cite{Elliott55}.
Further concrete models involving nonlocal singular interactions have been considered in \cite{GlowKu2020}. 

\section{Perturbed Fourier transforms $\Fo$ and $\Fo^*$}\label{s:perturb:F}  
Throughout this section  assume $V\in \cM_2$ is real, i.e.,  $V$ a real-valued finite measure satisfying (\ref{eV:wei2}).  
Following the conventional notion, one can define the perturbed Fourier transforms 
 (called generalized Fourier transforms in \cite{GH98}).   For $k$ in $\R$ define the functions $\psi_k^\pm=\psi^\pm(x,k)$ as 
\begin{equation}\label{psi(k)V:L-S}
\begin{aligned}
& \psi_k^+=
\begin{cases} \vphi_k^+& \text{if} \ k\ge 0\\
 \overline{\vphi_k^-}& \text{if} \ k< 0
\end{cases}
\quad 
& \psi_k^-=
\begin{cases} \vphi_k^-& \text{if} \ k\ge 0\\
 \overline{\vphi_k^+}& \text{if} \ k< 0.
\end{cases}
\end{aligned}
\end{equation} 
From (\ref{ePhi:k}) we have if $k \in \R$ 
\begin{align*}
& \psi_k^+=
\begin{cases} \vphi_k^+& \text{if} \ k\ge 0\\
 {\vphi_{-k}^-}& \text{if} \ k< 0 
\end{cases}
\quad 
& \psi_k^-=
\begin{cases} \vphi_k^-& \text{if} \ k\ge 0\\
 \vphi_{-k}^+& \text{if} \ k< 0 .
\end{cases}
\end{align*}
 In general,  $\psi_k^+,  \overline{ \psi_k^-} $ are correlated  but
not  the same even for radial $V=V(|x|)$. So the perturbed $\Fo_+$ and $\Fo_-$  
defined as  (\ref{Fo+:f}) and (\ref{e:Fo-psi}) are not the same, see Examples \ref{ex:sech} and \ref{ex:deltaV}.  

The functions $\psi^\pm_k$ solve the Lippmann-Schwinger type equation (\ref{psiV:L-S}) in $z=k\in \R_*=\R\setminus\{0\}$ with respect to the measure $V$  
\begin{align}\label{psiV:L-S}
&\psi^\pm(x,k)= e^{\pm ikx} +\frac{1}{2i |k|} \int e^{i |k| |x-y|}V(dy)\psi^\pm(y,k) 
\end{align}
which follows from  (\ref{phi:V:scatter}) and the  definitions above. 
It is easy to verify that any solution to \eqref{psiV:L-S} solves (\ref{eU:Vphi}) with $ z=k\in\R_*$
and admits the following asymptotics: If $k>0$, 
\begin{align}\label{a:psi:k+} 
&\psi^+_k(x) \rightarrow \left\{
\begin{array}{ll} 
t_k e^{ ikx} & x \rightarrow +\infty\\
 e^{ ikx} + r_k^+ e^{- ikx} & x\rightarrow -\infty
\end{array}\right. 
\end{align} 
and if $k<0$
\begin{align}\label{a:psi:k-} 
&\psi^+_k(x) \rightarrow \left\{
\begin{array}{ll} 
\overline{t_k} e^{ ikx} & x \rightarrow -\infty\\
 e^{ ikx} + \overline{r_k^-} e^{- ikx} & x\rightarrow +\infty\,.
\end{array}\right. 
\end{align} 
Similar asymptotic behaviors hold for $\psi^-_k(x)$.
Thus, if $k>0$, $\psi^\pm_k(x)$ have the same asmptotics as   $\vphi^\pm_k(x)$;
and if $k<0$, $\psi^\pm_k(x)$ have the same asmptotics as   $\overline{\vphi^\mp}_k(x)$.
We see on the physics domain $\psi_k^+$ represent {\em incoming waves from the left} while 
$\psi_k^-$ {\em incoming waves from the right}.  
Note that the fundamental function of $(-\frac{d^2}{dx^2}-k^2)u=\de(x-y)$ is given by 
\begin{equation}
(-\frac{d^2}{dx^2}-k^2)\inv(x,y)= -\frac{ e^{i |k| |x-y|}}{2i |k|} \,.
\end{equation} 
So,  we can formally solve (\ref{psiV:L-S}) by  
\begin{align}\label{ePsi:R0V}
\psi^\pm(x,k)= (I+R_0(k^2)V)\inv e^{\pm ik x}\,
\end{align}
{where $R_0(k^2)= (H_0-k^2)\inv$}. 
For $f\in L^1$, define the perturbed Fourier transforms associated to $H_V$
\begin{align}
&\Fo_+f (k)=  \frac{1}{\sqrt{2\pi}}\int_\R  f (x) \overline{\psi^+ (x,k)}dx\label{Fo+:f}\\ 
&\Fo^*_+g (x)=  \frac{1}{\sqrt{2\pi}}\int_\R  g (k) {\psi^+}(x,k)dk\label{Fo*+g}\\  
&\Fo_- f (k)=  \frac{1}{\sqrt{2\pi}}\int_\R  f (x) {\psi^-}(x,k)dx\label{e:Fo-psi}\\ 
&\Fo_-^*g (x)=  \frac{1}{\sqrt{2\pi}}\int_\R  g (k) \overline{\psi^-(x,k)}dk\label{eFo*-g}
\end{align}
where $\Fo_\pm$ and $\Fo_\pm^*$ are continuous mappings from $L^2\to L^2$. 
From  Proposition \ref{HV:spec-measure} 
we see the spectral measure  involves  
$\vphi^\pm_k$, so it is natural and necessary to consider $\Fo_\pm$ and $\Fo^*_\pm$  as above.

Recall {the  following from}  
 \cite[sections VIII-IX]{GH98}. 
\begin{prop}\label{p:Fo:L2} The transforms $\Fo_\pm$ and $\Fo_\pm^*$ extend to continuous mappings from $L^2\to L^2$.
Moreover, $\Fo_\pm^*$ are the adjoint of $\Fo_\pm$ satisfying 
\begin{align}\label{F*Fo:inversion}
&\Fo_\pm^* \Fo_\pm= P_{ac}\,, \qquad \Fo_\pm\Fo^*_\pm = I \,.\quad 
\end{align}
That is, $\Fo_\pm:  \sH_{ac}\to L^2$ and $\Fo_\pm^*:  L^2\to \sH_{ac}$ are inverse isometry operators.  
\end{prop}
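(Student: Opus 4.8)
The plan is to establish the two operator identities in \eqref{F*Fo:inversion} by combining the spectral-measure formula of Proposition \ref{HV:spec-measure} with the existence and completeness of the wave operators $W_\pm$. I would first treat $\Fo_\pm$ on a dense class, say $f,g\in\cD=C_0^\infty$, where all the integrals converge absolutely because of the uniform bounds on $\vphi_k^\pm$ and $\dot\vphi_k^\pm$ in Proposition \ref{tr:vphi_z}(d). For such $f,g$ and $0<\al<\beta$, I would compute $(P_{(\al,\beta)}(H)f,g)$ via Stone's formula \eqref{Eab:R(zeta)}, insert the Green's function \eqref{green:Hv}, pass to the limit $\eps\to0$ using the boundary values $J_k^\pm$, $\al_k$, and recognize the resulting double integral against the density kernel $p(x,y,\lam)$ of \eqref{Hv:spectra-measure}. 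Rewriting $p$ in terms of $\psi_k^\pm$ via the definitions \eqref{psi(k)V:L-S} and the symmetries \eqref{ePhi:k}, \eqref{rt:k}, one finds that the change of variables $\lam=k^2$ (splitting $k>0$ and $k<0$) turns $\int_\al^\beta m_{f,g}(\lam)\,d\lam$ into $\int_{\sqrt\al\le|k|\le\sqrt\beta}\Fo_+f(k)\overline{\Fo_+g(k)}\,dk$, which is exactly the polarized form of the Plancherel identity restricted to a spectral band. Letting $\al\to0$, $\beta\to\infty$ and using that $\sigma_{pp}(H)\subset(-\infty,0)$ and $\sigma_{sc}(H)=\emptyset$ (Proposition \ref{HV:spec-measure}) gives $\|\Fo_+ f\|_2^2=(P_{ac}f,f)$ on $\cD$, hence by density $\Fo_+^*\Fo_+=P_{ac}$ as bounded operators on $L^2$; the same argument with $\psi_k^-$ handles $\Fo_-$.

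For the second identity $\Fo_\pm\Fo_\pm^*=I$ I would argue via the wave operators. Using the representation \eqref{ePsi:R0V}, $\psi^\pm(\cdot,k)=(I+R_0(k^2)V)^{-1}e^{\pm ik\cdot}$, together with the stationary formula for $W_\mp$ (the identity $W_\mp=\Fo_\pm^*\Fo_0$ announced after Proposition \ref{tr:vphi_z}), one gets $\Fo_\pm^*=W_\mp\Fo_0^*$ and $\Fo_\pm=\Fo_0 W_\mp^*$. Since $W_\mp$ is a partial isometry from $L^2$ onto $\ran W_\mp=\sH_{ac}$ with initial space all of $L^2$ (completeness, from \cite{Herc89}), we have $W_\mp^* W_\mp=I$ on $L^2$ and $W_\mp W_\mp^*=P_{ac}$. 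Hence
\begin{align*}
\Fo_\pm\Fo_\pm^* = \Fo_0 W_\mp^* W_\mp \Fo_0^* = \Fo_0\Fo_0^* = I,
\end{align*}
the last equality being the ordinary Plancherel theorem for $\Fo_0$. This also re-derives $\Fo_\pm^*\Fo_\pm=\Fo_0 W_\mp W_\mp^*\Fo_0^*=\Fo_0^* P_{ac}^{H_0}$... more carefully, $=W_\mp W_\mp^* $ conjugated appropriately, $=P_{ac}$, consistent with the first part; I would present whichever order is cleaner, likely deriving both identities from the wave-operator factorization and using the spectral-measure computation only to \emph{prove} that factorization.

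The adjointness claim $(\Fo_\pm)^*=\Fo_\pm^*$ is then the easy bookkeeping step: for $f,g\in\cD$, Fubini (justified by the uniform bounds of Proposition \ref{tr:vphi_z}(d) and compact support) gives $(\Fo_+f,g)_{L^2}=\int\!\!\int f(x)\overline{\psi^+(x,k)}\,dx\,\overline{g(k)}\,dk=(f,\Fo_+^*g)_{L^2}$, and boundedness extends this to all of $L^2$. Finally, the identities \eqref{F*Fo:inversion} say precisely that $\Fo_\pm\colon\sH_{ac}\to L^2$ is an isometry with $\ker\Fo_\pm=\sH_{ac}^\perp$ and range all of $L^2$, with $\Fo_\pm^*$ the inverse on $\sH_{ac}$; this is the stated conclusion.

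I expect the main obstacle to be the limiting argument in Stone's formula: justifying the interchange of $\lim_{\eps\to0}$ with the $\lam$-integral and with the $x,y$-integrals in $(R(\zeta\pm i\eps)f,g)$, i.e., controlling $G(x,y,\zeta\pm i\eps)$ uniformly up to the real axis away from $\zeta=0$. This rests on the continuity up to $\{\Im z\ge0, z\ne0\}$ of the Jost functions and the non-vanishing of $\al_k$ for real $k\ne0$ (Proposition \ref{tr:vphi_z}(c), Remark \ref{re:simple:eigen}), plus the $O(x^{-2})$ decay in \eqref{eJz:asymp} to get integrability in $x,y$; near $k=0$ one exploits that $\{0\}$ has zero spectral measure. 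If one instead takes the wave-operator route as primary, the obstacle shifts to establishing $W_\mp=\Fo_\pm^*\Fo_0$ rigorously, which is itself the content of the stationary scattering theory recalled from \cite{GH98,Herc89} and may simply be cited.
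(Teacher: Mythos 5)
The paper gives no proof of this proposition: it is quoted from \cite[sections VIII--IX]{GH98}, with only the remark that \eqref{F*Fo:inversion} follows from the existence and completeness of $W_\pm$. Your two-step reconstruction --- (i) $\Fo_\pm^*\Fo_\pm=P_{ac}$ by polarizing a Plancherel identity obtained from Stone's formula and the spectral density of Proposition \ref{HV:spec-measure}, and (ii) $\Fo_\pm\Fo_\pm^*=I$ from the factorization $W_\mp=\Fo_\pm^*\Fo_0$ together with $W_\mp^*W_\mp=I$, $W_\mp W_\mp^*=P_{ac}$ --- is precisely the strategy the paper itself executes in the $L^1\cap L^2$ setting (Lemma \ref{lem:Pf} for the Plancherel identity, Lemma \ref{l:Om:F0F} for the factorization, Propositions \ref{Th:F-surj} and \ref{c:wave:F*F0} for surjectivity), so the route is sound, and it is non-circular provided you keep the order you propose: the factorization must be derived from the first identity plus an Abelian-limit computation, not from the second identity. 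Two details need tightening. First, the line $\Fo_\pm^*\Fo_\pm=\Fo_0 W_\mp W_\mp^*\Fo_0^*$ has the factors transposed; from $\Fo_\pm^*=W_\mp\Fo_0^*$ and $\Fo_\pm=\Fo_0W_\mp^*$ one gets $\Fo_\pm^*\Fo_\pm=W_\mp\Fo_0^*\Fo_0W_\mp^*=W_\mp W_\mp^*=P_{ac}$, as you indicate parenthetically. More substantively, the phrase ``recognize the resulting double integral against the density kernel'' conceals the one genuinely nontrivial computation, namely the completeness identity
\[
2\,\Re\Bigl(\frac{\vphi^+_k(y)\,\vphi^-_k(x)}{t_k}\Bigr)
=\overline{\vphi^+_k(x)}\,\vphi^+_k(y)+\overline{\vphi^-_k(x)}\,\vphi^-_k(y),
\qquad k>0,
\]
which is what converts the kernel \eqref{Hv:spectra-measure} (after the substitution $\lam=k^2$, $d\lam=2k\,dk$, and folding the $k<0$ half-line back in via \eqref{psi(k)V:L-S}) into $\tfrac1{2\pi}\int\overline{\psi^+(x,k)}\,\psi^+(y,k)\,dk$. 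It does follow from \eqref{ePhi:k}, \eqref{rt:k} and the Wronskian relations, but it is exactly the point where the two channels $\vphi_k^\pm$ recombine into the single family $\psi_k^+$, and it should be carried out rather than asserted.
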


The Fourier inversion identities  (\ref{F*Fo:inversion}) result from 
the existence and (asymptotic) completeness of $W_\pm$, which  
was proven in \cite{Herc89} for general class of distributional potentials. 
\begin{theorem}\label{Fo:waV} \textup{(\cite[section IX]{GH98})} The wave operators 
\begin{align}\label{eWav:pm}
W_\pm:=\slim_{t\to \pm\iy} e^{itH} e^{-itH_0}=\Fo^*_\mp\Fo_0\,
\end{align}
exist and are complete, that is, 
\begin{align*}
\ran(W_+)=\ran(W_-)=\sH_{ac}\,.  
\end{align*}
\end{theorem}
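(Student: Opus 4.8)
The plan is to prove existence by Cook's method, identify the strong limits with $\Fo^*_\mp\Fo_0$ via a stationary‑phase analysis of the Lippmann--Schwinger solutions $\psi^\pm(x,k)$, and finally obtain asymptotic completeness $\ran W_\pm=\sH_{ac}$ from the explicit spectral representation of Proposition~\ref{HV:spec-measure} together with the analytic structure of the Jost functions.

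\emph{Existence.} Fix $\phi$ in the dense set $\{\phi\in L^2:\ \widehat\phi\in C_0^\infty(\R_*)\}$ and put $\Omega(t)\phi:=e^{itH}e^{-itH_0}\phi$. By a Cook‑type argument adapted to the form perturbation \eqref{quadratic:H_V} (this is where the finiteness of $V$ and the weight in \eqref{eV:wei2} enter) one differentiates to get $\frac{d}{dt}\Omega(t)\phi=i\,e^{itH}V\,e^{-itH_0}\phi$. Since $\widehat\phi$ is supported where frequencies are bounded and bounded away from $0$, non‑stationary phase shows that $e^{-itH_0}\phi$ is, up to an $O(|t|^{-N})$ tail, concentrated in $\{|x|\asymp|t|\}$ with amplitude $\lesssim|t|^{-1/2}$; testing against $|V|$ and using $\int_{|x|\gtrsim|t|}x^2\,|V|(dx)\lesssim|t|^{-2}$ yields an integrable bound of order $O(|t|^{-3/2})$ near $\pm\infty$. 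Hence $W_\pm\phi=\lim_{t\to\pm\infty}\Omega(t)\phi$ exists, and since the $\Omega(t)$ are contractions it extends to all of $\sH$. Because $H_0$ is purely absolutely continuous, $W_\pm$ are isometries ($W_\pm^*W_\pm=I$), and the chain rule gives $e^{itH}W_\pm=W_\pm e^{itH_0}$, so $\ran W_\pm$ is a reducing subspace of $H$ contained in $\sH_{ac}$.

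\emph{Identification.} To match the limits with $\Fo^*_\mp\Fo_0$, take again $\widehat\phi\in C_0^\infty(\R_*)$ and examine $e^{-itH_0}\phi-e^{-itH}\Fo^*_\mp\Fo_0\phi$. Since $H\psi^\mp_k=k^2\psi^\mp_k$, the second term is $\frac{1}{\sqrt{2\pi}}\int e^{-itk^2}\widehat\phi(k)\,\psi^\mp(x,k)\,dk$ while $e^{-itH_0}\phi(x)$ is the same expression with $e^{ikx}$ in place of $\psi^\mp(x,k)$, so the difference is governed by the Lippmann--Schwinger correction $\frac{1}{2i|k|}\int e^{i|k||x-y|}\psi^\mp(y,k)\,V(dy)$ from \eqref{psiV:L-S}. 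A stationary‑phase estimate in $k$ --- the phase $k\mapsto k^2$ has a single nondegenerate critical point --- combined with the uniform bounds of Proposition~\ref{tr:vphi_z}(d) and the escape of the free packet from any fixed compact set shows this $L^2$ difference tends to $0$ as $t\to\pm\infty$; the outgoing kernel $e^{i|k||x-y|}$ is precisely what ties $\psi^+$ (incoming from the left) to the limit $t\to-\infty$. Hence $W_\pm=\Fo^*_\mp\Fo_0$ on the dense set, and by continuity everywhere.

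\emph{Completeness --- the main obstacle.} It remains to show $\ran W_\pm=\sH_{ac}$, which by the identification and the unitarity of $\Fo_0$ is the surjectivity of $\Fo^*_\mp$ onto $\sH_{ac}$, equivalently $\Fo_\pm\Fo^*_\pm=I$ on $L^2$, i.e.\ completeness of $\{\psi^\pm_k:k\in\R\}$ in $\sH_{ac}$. The key input is Proposition~\ref{HV:spec-measure}: it exhibits $dE^{ac}_\lambda$ ($\lambda=k^2$) through $\psi^\pm_k$, so for $f\in\sH_{ac}$ one gets $\|f\|^2=\int_0^\infty d\mu_{f,f}(\lambda)=\int_\R|\Fo_\pm f(k)|^2\,dk$, that is $\Fo^*_\pm\Fo_\pm=P_{ac}$. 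The genuinely hard point is upgrading this isometry to surjectivity $\Fo_\pm\Fo^*_\pm=I$: any $g\in\sH_{ac}\ominus\ran W_\pm$ would be an $H$‑absolutely‑continuous vector orthogonal to every generalized eigenfunction, contradicting the completeness of the eigenfunction expansion, while the finiteness and simplicity of the zeros of $\alpha_z$ (Proposition~\ref{tr:vphi_z}(c) and Remark~\ref{re:simple:eigen}) guarantee $\sigma_{sc}(H)=\emptyset$ and that the discrete part is exactly $\sH_{pp}$. I would present this eigenfunction‑expansion route as primary, in keeping with \cite{GH98,Herc89}, and note the technically cleaner alternative: under \eqref{eV:wei2} the sandwiched resolvent difference is trace class, so the Birman--Kuroda theorem delivers existence and completeness of $W_\pm$ simultaneously. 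The delicate step throughout is precisely the passage from $\Fo^*_\pm\Fo_\pm=P_{ac}$ to $\Fo_\pm\Fo^*_\pm=I$, which needs either the full Jost‑function apparatus of \S\ref{s:jost} or the trace‑class input.
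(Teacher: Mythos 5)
The paper does not actually prove Theorem \ref{Fo:waV}: it imports it verbatim from \cite[section IX]{GH98}, with existence and completeness of $W_\pm$ attributed to \cite{Herc89}. So you are supplying an argument where the paper supplies only a citation. Your three-step architecture --- Cook's method for existence, stationary identification of the limit with $\Fo^*_\mp\Fo_0$, completeness from the spectral-measure formula of Proposition \ref{HV:spec-measure} --- is exactly the route of those references, and your trace-class alternative is what the paper itself invokes in Lemma \ref{l:exist:ac} for the $L^1\cap L^2$ case. The plan is sound, but two points need repair.

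First, in the Cook step the displayed bound $\int_{|x|\gtrsim|t|}x^2\,|V|(dx)\lesssim|t|^{-2}$ does not follow from \eqref{eV:wei2}; Chebyshev gives $\int_{|x|\gtrsim|t|}|V|(dx)\lesssim|t|^{-2}$, while the truncated second moment is only $o(1)$. More seriously, since $V$ is a measure, $Ve^{-itH_0}\phi$ is not in $L^2$ and the derivative of $\Omega(t)\phi$ exists only in the form sense; the Cauchy estimate you obtain is for $\la \Omega(t)\phi-\Omega(s)\phi,\psi\ra$ with $\psi$ in the form domain, and upgrading this to the \emph{strong} limit asserted in \eqref{eWav:pm} requires the extra observations that $\|\Omega(t)\phi\|$ is constant and that $\Omega(s)\phi$ stays bounded in $W^{1,2}$ uniformly in $s$; this should be said. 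Second, the logic of your completeness paragraph is inverted. Once $W_\mp=\Fo_\pm^*\Fo_0$ is identified, the identity $\Fo_\pm\Fo_\pm^*=\Fo_0W_\mp^*W_\mp\Fo_0^*=I$ is the \emph{automatic} half, because $W_\mp$ is an isometry as a strong limit of unitaries --- it is not something to be ``upgraded to.'' Completeness is precisely the other identity $\Fo_\pm^*\Fo_\pm=P_{ac}$, which you do extract from Proposition \ref{HV:spec-measure} and which yields $\sH_{ac}=\ran(\Fo_\pm^*\Fo_\pm)\subseteq\ran\Fo_\pm^*=\ran W_\mp\subseteq\sH_{ac}$ directly, with no contradiction argument needed. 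The genuinely nontrivial step there, which you pass over with ``one gets,'' is the algebraic conversion of the density $\frac1{2\pi}\Re\bigl(\vphi_k^+(y)\vphi_k^-(x)/(kt_k)\bigr)$ of \eqref{Hv:spectra-measure} into $\frac1{4\pi k}\sum_{\pm}\psi^{\pm}(x,\cdot)\overline{\psi^{\pm}(y,\cdot)}$ via the relations \eqref{rt:k}; that computation is the heart of the completeness proof in \cite{GH98} and should be carried out rather than asserted.
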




As a consequence of  Theorem \ref{Fo:waV}, the generalized functions $\psi_k^\pm$ 
constitutes a complete orthonormal  set in $\sH_{ac}$.  Moreover, the equations 
\begin{align}
 W^*_\pm W_\pm=I, \quad W_\pm W_\pm^*=P_{ac}\,. \label{ac:WWstar} 
\end{align}
and  (\ref{F*Fo:inversion}) hold true.  

Equation (\ref{F*Fo:inversion}) allows us to state a spectral theorem for $H$ by defining the spectral operator $\vphi(H)$ in $\sH_{ac}$ for all $\vphi$ in $\cB(\R)$, the space of all bounded Borel measurable functions. 
 For all $f\in \sH_{ac}$\,, define  
\begin{align}\label{spec:F*Fo}
&\vphi(H) f=\Fo_\pm^* \vphi(k^2) \Fo_\pm f \,.
\end{align} 
Then, the operators $\vphi(H)$: $\sH_{ac}\to\sH_{ac}$ coincides with the  (unique)  functional calculus form, consult \cite[Theorem VIII.5]{RS}. 
We have seen that the definitions of the perturbed Fourier transforms are based on the spectral measure representation in Proposition \ref{HV:spec-measure} 
and the asymptotic completeness of the wave operators. 


\section{The $L^2$ theory for $H_V$, $V\in L^1\cap L^2$}\label{s:L2}  
 Throughout this section till section \ref{s:main:proof} we assume $V\in L^1\cap L^2(\R)$ is real-valued. 
We shall establish the $L^2$ theory for the perturbed Fourier 
transform as stated in Theorem \ref{th:Plancherel}. 

Recall that  the operator $-d^{\,2}/dx^2$ on $L^2(\R)$ is essentially
self-adjoint on the domain $\cD=C^\infty_0(\R)$ and  $H_0$ denotes its unique self-adjoint extension. 
The operator $-d^{\,2}/dx^2 + V(x)$ on $L^2(\R)$ is also essentially self-adjoint on the  domain $\cD$. 
Let $H$ denote  its unique self-adjoint extension. The domains $\mathcal{ D}(H_0)$ and 
$\mathcal{ D}(H)$ are the same, namely,  
\begin{align*}
\mathcal{ D}(H_0) = \{ f \in L^2: f^{\prime \prime} \in L^2\ \mbox{in  the sense of distributions} \}. 
\end{align*}   
In fact, if $V\in L^2+L^\infty$, then by Kato-Rellich lemma, $\cD(V)\supset D(H_0)$ and $H=H_V:= H_0+V$ 
is self-adjoint on $\cD(H_0)$. 
If in addition $V\in L^1$, then $\sigma_{sc}(H)=\emptyset$ and hereby $\sH=\sH_{ac}\oplus \sH_{pp}$. 
Here and in what follows, we shall always adopt the same notations  
$\sH_{ac}$, $\sH_{pp}$, $\sH_{sc}$ 
and their corresponding projections $P_{ac}$, $P_{pp}$, $P_{sc}\,$, etc.  as in \S \ref{s:preliminV} to \S \ref{s:perturb:F}.

\begin{theorem} \label{th:Plancherel}
Suppose $V\in L^1\cap L^2$ is real-valued.
 Then there  exists a family of solutions $e(x,\xi)$, a.e. $\xi \in \R$,  to equation   
(\ref{S_eq}) with the following properties.  
\begin{enumerate}
\item[(a)]  If $f \in L^2$, then for every $M > 0$, $\xi \in \R$, the integral
$$
(2\pi)^{-1/2} \int_{-M}^M f(x) \overline{e(x,\xi)} \, dx
$$
converges absolutely. As a function of $\xi$ this integral defines an element
of $L^2$. Furthermore, there exists an element $f^\sharp \in L^2$ such that
\begin{align}\label{cF:L2}
&(2\pi)^{-1/2} \int_{-M}^M f(x) 
\overline{e(x,\xi)} \, dx  \rightarrow {f}^\sharp (\xi) :=\cal{F}f  
\end{align}
in $L^2$ norm as $M \rightarrow \infty$.  
\item[(b)] There exists a countable disjoint collection of intervals 
$[\alpha_i,\beta_i) \subset (0,\infty)\setminus \cal{E}_0$, with $\cE_0$ a bounded   set of measure zero,  
such that for every $f \in L^2$, 
$$
\sum_{i=1}^N (2\pi)^{-1/2} \int_{\alpha_i \leq \xi^2 \leq \beta_i}
{f}^\sharp(\xi)e(x,\xi) \, d\xi \rightarrow P_{ac} f(x)
$$
in $L^2$ norm as $N \rightarrow \infty$. 
\item[(c)]   For all $f \in L^2$, we have 
\begin{align*}
& \Vert P_{ac}f \Vert_{L^2}=\Vert {f}^\sharp \Vert_{L^2}\\
& \Vert f \Vert_{L^2}^2 = \Vert {f}^\sharp\Vert_{L^2}^2  +\sum_{\lam_k\in \sigma_{pp}} |( f,e_k) |^2\,,
\end{align*}
where $\{e_k\}$ is a sequence of orthonormal basis in 
$\sH_{pp}$ such that  $e_k$ are the eigenfunctions of $H$ corresponding to the eigenvalues $\lam_k\in \sigma_{pp}(H)\subset (-\iy,0)$. 
\item[(d)]  If $f \in \mathcal{ D}(H)$  
then ${(Hf)}^\sharp(\xi) = \xi^2 {f}^\sharp(\xi)$  for almost every $\xi^2\in \sigma_{ac}(H)=[0,\iy)$. 
\end{enumerate}\end{theorem}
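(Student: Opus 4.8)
The plan is to follow Ikebe's scheme in the resolvent-calculus form sketched in the introduction: construct the generalized eigenfunctions $e(x,\xi)$ from the Lippmann--Schwinger equation \eqref{LS_eq} off a small exceptional set, prove the Plancherel identity \eqref{Fab:plancherel} by Stone's formula together with a limiting absorption principle, and then read off (a)--(d). \emph{Step 1 (the eigenfunctions and the set $\cE_0$).} For $\xi\in\R_*$ put $\zeta=\xi^2$ and consider $I+R_0(\zeta+i0)V$ on a weighted space $L^2_{-s}$, $s>1/2$ (equivalently on $C_b^0$ against the weight $(1+|x|)^{-1}$). Since $V\in L^1\cap L^2$, the explicit kernel $-e^{i|\xi||x-y|}/(2i|\xi|)$ of $R_0(\zeta+i0)$ together with $V\in L^1$ shows $R_0(\zeta+i0)V$ is compact there, so the Fredholm alternative (Lemma \ref{lem:Fredholm}) applies: $I+R_0(\zeta+i0)V$ is boundedly invertible except for $\xi$ in a set $\cE_0$ where the homogeneous equation has a nontrivial solution. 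One checks that $\cE_0$ is closed in $\R_*$, bounded (the operator norm is $O(1/|\xi|)$ by Remark \ref{error:GH}, so a Neumann series converges for $|\xi|$ large), and Lebesgue-null --- the last point because a nonzero homogeneous solution at energy $\xi^2>0$ would be an $L^2$ eigenfunction of $H$ with positive eigenvalue, which is excluded since $\sigma_{pp}(H)\subset(-\infty,0)$ and $\sigma_{sc}(H)=\emptyset$. For $\xi\notin\cE_0$ set $e(x,\xi)=(I+R_0(\xi^2+i0)V)^{-1}e^{i\xi x}$, which solves \eqref{S_eq} and has the asymptotics of \eqref{a:psi:k+}--\eqref{a:psi:k-}; for $\xi\in\cE_0$ put $e(x,\xi)=0$. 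Lemma \ref{l:e(xxi):E0} records the needed uniform-in-$x$ bounds on $e(x,\xi)$ and $\partial_x e(x,\xi)$.

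\emph{Step 2 (Plancherel on the continuous spectrum) --- the heart of the argument.} By Stone's formula \eqref{Eab:R(zeta)}, for $0<\alpha<\beta$ with $[\alpha,\beta]$ disjoint from the exceptional energies,
\[
\|P_{(\alpha,\beta)}(H)f\|_2^2=\lim_{\eps\downarrow 0}\frac1{2\pi i}\int_\alpha^\beta\big((R(\lambda+i\eps)-R(\lambda-i\eps))f,f\big)\,d\lambda .
\]
The limiting absorption principle --- obtained from the second resolvent identity $R(\zeta)=R_0(\zeta)-R_0(\zeta)VR(\zeta)$, the boundary values of $R_0(\zeta\pm i0)$ on $L^2_{-s}$, and the invertibility of $I+VR_0(\zeta\pm i0)$ from Step 1 --- gives that $R(\lambda\pm i\eps)f$ converges in $L^2_{-s}$ and that the kernel of $R(\lambda+i0)-R(\lambda-i0)$ is expressible through $e(x,\pm\sqrt\lambda)$. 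Combining Lemmas \ref{L:R(z)} and \ref{lem:F-Green} one identifies, up to the universal constant,
\[
\big((R(\lambda+i0)-R(\lambda-i0))f,f\big)=\frac{1}{\sqrt\lambda}\Big(|\cF f(\sqrt\lambda)|^2+|\cF f(-\sqrt\lambda)|^2\Big),
\]
and the substitution $\lambda=k^2$ yields Lemma \ref{lem:Pf}:
\[
\int_{\alpha\le k^2\le\beta}|\cF f(k)|^2\,dk=\|P_{(\alpha,\beta)}(H)f\|_2^2 .
\]
Letting $\alpha\downarrow 0$ and $\beta\uparrow\infty$ through values avoiding the null exceptional energy set gives both $\|\cF f\|_2=\|P_{ac}f\|_2$ and the absolute- and $L^2$-convergence asserted in part (a) (first for $f\in\cD$, then for all $f\in L^2$ by this very isometry).

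\emph{Steps 3--5 (the remaining items).} For (b): the complement of the exceptional energy set in $(0,\infty)$ is a countable disjoint union of intervals $[\alpha_i,\beta_i)$; for each, $\tfrac1{\sqrt{2\pi}}\int_{\alpha_i\le\xi^2\le\beta_i}f^\sharp(\xi)e(\cdot,\xi)\,d\xi=P_{(\alpha_i,\beta_i)}(H)f$ by the bilinear version of Step 2 and a density argument, and summing over $i$ together with $\sum_i P_{(\alpha_i,\beta_i)}(H)=P_{ac}$ (no spectral mass on the null set) produces $P_{ac}f$. For (c): since $V\in L^1\cap L^2$, $\sigma_{sc}(H)=\emptyset$ and $\sH=\sH_{ac}\oplus\sH_{pp}$ with $\sigma_{pp}(H)\subset(-\infty,0)$ and an orthonormal eigenbasis $\{e_k\}$ of $\sH_{pp}$; hence $\|f\|_2^2=\|P_{ac}f\|_2^2+\sum_k|(f,e_k)|^2=\|f^\sharp\|_2^2+\sum_k|(f,e_k)|^2$ by Step 2. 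For (d): for $f\in\cD(H)=W^{2,2}$ integrate by parts twice in $\int_{-M}^M(Hf)(x)\overline{e(x,\xi)}\,dx$, using $(-d^2/dx^2+V)e=\xi^2e$; the boundary terms at $\pm M$ vanish in the $L^2(d\xi)$-limit by the uniform $x$-bounds on $e$ and $\partial_x e$ from Lemma \ref{l:e(xxi):E0} together with the decay of $f,f'$, yielding $(Hf)^\sharp(\xi)=\xi^2 f^\sharp(\xi)$ for a.e.\ $\xi$, i.e.\ for a.e.\ $\xi^2\in[0,\infty)$.

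\emph{Main obstacle.} The substantive work is Step 2: proving the limiting absorption principle up to the edge $\lambda=0^+$ of the continuous spectrum and across (but excluding) the exceptional set, and extracting the exact kernel of $R(\lambda+i0)-R(\lambda-i0)$ in terms of the Lippmann--Schwinger solutions. Controlling $\cE_0$ --- that it is closed, bounded and null, i.e.\ that $H$ has no positive eigenvalues and no "resonances" of positive measure --- and handling the endpoint $\xi=0$ are the delicate points; once \eqref{Fab:plancherel} is in hand, parts (a)--(d) follow by routine limiting and density arguments.
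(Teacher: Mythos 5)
Your overall strategy (Lippmann--Schwinger solutions off an exceptional set, Stone's formula, Plancherel, then density arguments for (a)--(d)) is the same as the paper's, but two of your key steps have genuine gaps.

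First, the functional-analytic setup in your Step 1 does not cover the stated class $V\in L^1\cap L^2$. You propose to invert $I+R_0(\xi^2+i0)V$ on $L^2_{-s}$ with $s>1/2$ (or on a weighted $C_b^0$ space). For $R_0(\xi^2+i0)V$ to even be bounded there one needs $\int |V(y)f(y)|\,dy\lesssim \|f\|_{L^2_{-s}}$, i.e.\ $(1+|x|)^{s}V\in L^2$ (the Agmon class $(1+|x|)^{-1/2-\eps}L^2$) or $(1+|x|)V\in L^1$ for the $C_b^0$ variant; neither is implied by $V\in L^1\cap L^2$, and the paper emphasizes precisely that $L^1\cap L^2$ is strictly larger than these classes. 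The paper's device is to symmetrize: it works with $T(w)=|V|^{1/2}R_0(w^2)V^{1/2}$ on \emph{unweighted} $L^2$, which is Hilbert--Schmidt as soon as $V\in L^1$, solves the modified equation \eqref{LS_mod_eq} for $\psi(\cdot,\xi)=|V|^{1/2}e(\cdot,\xi)$ by the analytic Fredholm theorem (Lemma \ref{lem:Fredholm}), and only then reconstructs $e(x,\xi)$. Relatedly, your justification that $\cE_0$ is null is incorrect as stated: a nontrivial solution of the homogeneous boundary-value equation at energy $\xi^2>0$ lives in the weighted space, not in $L^2$, so it is a resonance rather than an eigenfunction, and ruling out positive eigenvalues does not rule it out. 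The measure-zero (indeed closed, bounded) property of $\cE_0$ comes from the \emph{analytic} Fredholm theorem applied to the $w$-analytic, boundary-continuous family $T(w)$, not from the pointwise Fredholm alternative at each fixed $\xi$; you cite Lemma \ref{lem:Fredholm} but then rest the nullity claim on the wrong mechanism.

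Second, the identity at the heart of your Step 2 --- the kernel of $R(\lambda+i0)-R(\lambda-i0)$ expressed through $e(x,\pm\sqrt{\lambda})$ via a limiting absorption principle --- is exactly the content that has to be proved, and you assert it rather than derive it. The paper never takes operator boundary values of $R(\lambda\pm i\eps)$: it keeps $\eps>0$ throughout, applies the classical Plancherel theorem to $G(x,\cdot\,;\kappa^2)\in L^1\cap L^2$ to get \eqref{G_h_Plancherel}, integrates against $\overline{f(x)}f(y)$ (justifying every Fubini interchange via Lemmas \ref{L:R(z)}--\ref{lem:F-Green}), discretizes $\int_\alpha^\beta d\mu$ by Riemann sums, and only then sends $n\to\infty$ and $\eps\to 0^+$ using the functional calculus on the left and the \emph{uniform continuity up to the real axis} of $\Phi(\xi;\kappa)=(2\pi)^{-1/2}\int f\,\overline{h(\cdot,\xi;\kappa)}$ on the right (Lemma \ref{lem:Phi}); that boundary continuity of $(I+T(\kappa))^{-1}$ in Hilbert--Schmidt norm is what substitutes for a full LAP. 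Without supplying either this double-limit argument or an actual proof of the LAP valid for all of $L^1\cap L^2$, Lemma \ref{lem:Pf} --- and hence parts (a)--(c) --- is not established. (Your part (d) via integration by parts with boundary terms at $\pm M$ can be made to work, but the paper's route is simpler: $e(\cdot,\xi)$ is a weak solution because $Ve(\cdot,\xi)\in L^1$, so $(H\phi)^\sharp=\xi^2\phi^\sharp$ holds for $\phi\in C_0^\infty$ with no boundary terms, and one concludes by a core argument.)
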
 


\begin{remark} Accordingly, we can define the perturbed Fourier transform in $L^2$
\begin{align}\label{cFf:L2:V}
&\cF f(\xi):=\mylim_{M\to \iy} (2\pi)^{-1/2} \int_{-M}^M f(x) \overline{e(x,\xi)} \, dx  
\end{align}
and the adjoint of $\cF$   by 
\begin{align}\label{eF*:inv}
\cF^*g (x):=\mylim_{N\to \iy}\sum_{i=1}^N (2\pi)^{-1/2} \int_{\alpha_i \leq \xi^2 \leq \beta_i} {g}(\xi)e(x,\xi) \, d\xi \,,
\end{align}
because  $\cF$ is surjective from $\sH_{ac}$ onto $L^2$ in view of Proposition \ref{Th:F-surj}.  
Here the $\mylim$ means the convergence in the $L^2$ sense. 
 Then it follows that 
\begin{align}\label{Eac:F*F}
& P_{ac}=\cF^* \cF ,  \qquad \cF\cF^*=I\,,  
\end{align}   
see  Proposition \ref{c:wave:F*F0}. 
Consequently, for all $f \in \cD(H)$, the following holds in $L^2$:  
\begin{align*}
& Hf= \cF^* \xi^2 \cF{f}+ \sum_k \lam_k ( f, e_k) e_k\,. 
\end{align*}
Moreover, for all $\vphi\in C^0_b(\R)$ one can define the spectral operator $\vphi(H)=\int \vphi(\lam)dE_\lam$ by 
\begin{align}\label{def:specOp}
& \vphi(H)f=\cF^* \vphi(\xi^2)\cF + \sum_k \vphi(\lam_k) ( f, e_k ) e_k\,\qquad\forall f\in L^2\,.  
\end{align} 
\end{remark}  

\begin{remark}  
It is also well-known that the asymptotic completeness of wave operators 
imply that  $\sigma_{sc}(H)$ is empty, see \eqref{e:Omega-f} and
 Lemma \ref{l:exist:ac}. 
\end{remark}

We divide the proof of Theorem \ref{th:Plancherel} into some lemmas and propositions in four sections. 
We begin with solving the scattering eigenfunctions $e(x,\xi)$  to the associated integral equation
\begin{equation}\label{LS_eq}
e(x,\xi) = e^{ix\xi} + 
(2i \vert \xi \vert)^{-1} \int_{-\infty}^\infty e^{i\vert \xi \vert  
\vert x-y \vert} V(y) e(y,\xi) \, dy.
\end{equation} 

\begin{remark}\label{re:E(xxi)pm:LS} We would like to mention that Theorem \ref{th:Plancherel} 
holds true for $e_\pm(x,\xi)$ that solve the following equation 
\begin{equation}\label{E_pm(xxi):L-S}
e_\pm(x,\xi) = e^{\pm ix\xi} + 
(2i \vert \xi \vert)^{-1} \int_{-\infty}^\infty e^{i\vert \xi \vert  \vert x-y \vert} V(y) e_\pm(y,\xi) \, dy.
\end{equation} 
Accordingly, one may define the transforms $\cF_\pm$ and $\cF_\pm^*$  in the pattern of Theorem \ref{th:Plancherel} 
but in the form of (\ref{Fo+:f})-(\ref{eFo*-g}).  
With this note, all the relevant statements and equations are valid that involve $\cF_\pm$, 
see equation (\ref{surjection_equation}), Proposition \ref{c:wave:F*F0} and Theorem \ref{th:Omega}.  
For a fixed idea, our proofs will be primarily concentrated on the case where $e(x,\xi)=e_+(x,\xi)$. 

\bigskip
Note that $(d^{\,2}/dx^2 + \xi^2) 
(2i \vert \xi \vert)^{-1} e^{i\vert \xi \vert \vert x-y \vert} = 
\delta(x-y)$. So, at least on a formal level, a solution to (\ref{LS_eq}) is  
a solution to (\ref{S_eq}). More precisely, it is easy to check that 
if $e(x,\xi)$ is a 
solution to (\ref{LS_eq}) such that $V(\cdot)e(\cdot,\xi) \in 
L^1(\R)$, then this solution is a {\em weak solution} to equation (\ref{S_eq}).  

It is not immediately evident that (\ref{LS_eq}) has solutions. Consider 
however the modified equation
\begin{equation}
\psi(x,\xi) = \vert V(x) \vert^{1/2} e^{ix\xi} + (2i \vert \xi \vert)^{-1} 
\int_{-\infty}^\infty \vert V(x) \vert^{1/2} e^{i\vert \xi \vert 
\vert x-y \vert}
V^{1/2}(y) \psi(y,\xi) \, dy.
\label{LS_mod_eq}
\end{equation} 
(Here $V^{1/2}(x)$ is notation for $\vert V(x) \vert^{1/2} {\rm sgn}\ V(x)$.) 
\end{remark} 

\begin{lemma}\label{l:e(xxi):E0} Let $V\in L^1$.  
There exists a bounded set $\mathcal{ E}_0 \subset (0,\infty)$ of measure zero
such that 
 equation  (\ref{LS_mod_eq}) has a unique solution $\psi(\cdot,\xi) \in L^2$ for all
$\xi \in \R $ with $\vert \xi \vert \in (0,\infty) \setminus \mathcal{ E}_0$.  
If $\vert \xi \vert \in (0,\infty) \setminus \mathcal{ E}_0$, then the function  
$$
e(x,\xi) = e^{ix\xi} + (2i\vert \xi \vert)^{-1} 
\int_{-\infty}^\infty e^{i\vert \xi \vert \vert 
x-y \vert} V^{1/2}(y) \psi(y,\xi) \, dy 
$$
exists as a bounded (locally) absolutely continuous function
($C^\infty$ function of $x$ when $V$ is $C^{\infty}$) and uniquely
obeys equation (\ref{LS_eq}).
\end{lemma}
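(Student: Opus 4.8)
The plan is to recast the modified Lippmann--Schwinger equation (\ref{LS_mod_eq}) as a fixed-point problem $\psi = g_\xi + K_\xi \psi$ in $L^2(\R)$, where $g_\xi(x) = |V(x)|^{1/2} e^{ix\xi}$ and $K_\xi$ is the integral operator with kernel
\[
K_\xi(x,y) = (2i|\xi|)^{-1} |V(x)|^{1/2} e^{i|\xi|\,|x-y|} V^{1/2}(y).
\]
First I would show that for each fixed $\xi$ with $|\xi|>0$, $K_\xi$ is a Hilbert--Schmidt operator on $L^2$: indeed $\iint |K_\xi(x,y)|^2\,dx\,dy \le (4\xi^2)^{-1} \|V\|_{L^1}^2 < \infty$ since $|V|^{1/2}\in L^2$ and the exponential factor has modulus $1$. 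In particular $K_\xi$ is compact, so the Fredholm alternative (Lemma \ref{lem:Fredholm}) applies: (\ref{LS_mod_eq}) has a unique solution $\psi(\cdot,\xi)\in L^2$ unless $1$ is an eigenvalue of $K_\xi$, i.e. unless the homogeneous equation $\psi = K_\xi\psi$ has a nontrivial solution. Define $\mathcal{E}_0 \subset (0,\infty)$ to be the set of $|\xi|$ for which this exceptional case occurs; away from $\mathcal{E}_0$ the operator $(I-K_\xi)^{-1}$ exists and is bounded, giving existence and uniqueness of $\psi(\cdot,\xi)$.

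The substantive part is to show $\mathcal{E}_0$ is bounded and of Lebesgue measure zero. For boundedness: using the Hilbert--Schmidt bound $\|K_\xi\|_{\mathrm{op}} \le \|K_\xi\|_{HS} \le (2|\xi|)^{-1}\|V\|_{L^1}$, one sees $\|K_\xi\|_{\mathrm{op}} < 1$ once $|\xi| > \tfrac12\|V\|_{L^1}$, so $I-K_\xi$ is invertible there by Neumann series; hence $\mathcal{E}_0 \subset (0, \tfrac12\|V\|_{L^1}]$. For the measure-zero claim, I would show that $\xi \mapsto K_\xi$ is, on the punctured half-line, an analytic (or at least real-analytic) family of compact operators --- the dependence enters only through the bounded-analytic factor $e^{i|\xi|\,|x-y|}$, and one checks continuity/analyticity of $\xi\mapsto K_\xi$ in Hilbert--Schmidt norm by dominated convergence. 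Then the analytic Fredholm theorem (cf. \cite[Theorem VI.14]{RS}) gives that either $(I-K_\xi)^{-1}$ fails to exist for all $\xi$ in the region, or it exists off a discrete set; since we already know it exists for large $|\xi|$, the exceptional set is discrete in $(0,\infty)$, hence countable and of measure zero, and together with the boundedness bound it is a bounded set of measure zero.

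Finally, having solved for $\psi(\cdot,\xi)\in L^2$ off $\mathcal{E}_0$, I would verify that
\[
e(x,\xi) = e^{ix\xi} + (2i|\xi|)^{-1}\int_{-\infty}^\infty e^{i|\xi|\,|x-y|} V^{1/2}(y)\,\psi(y,\xi)\,dy
\]
has the stated regularity and solves (\ref{LS_eq}). Boundedness and (local) absolute continuity of $e(\cdot,\xi)$ follow because $V^{1/2}\psi(\cdot,\xi) \in L^1$ (product of two $L^2$ functions, since $V^{1/2}\in L^2$ and $\psi\in L^2$), so $e(\cdot,\xi) - e^{i\cdot\xi}$ is, up to a constant, a convolution-type integral of an $L^1$ function against $e^{i|\xi|\,|\cdot|}$, which is bounded and Lipschitz-continuous off the singularity; when $V\in C^\infty$ a bootstrap using the equation $(d^2/dx^2 + \xi^2)e = Ve$ upgrades this to $C^\infty$. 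To get (\ref{LS_eq}) itself, multiply the definition of $e$ by $|V(x)|^{1/2}$ and observe the right-hand side becomes $g_\xi(x) + K_\xi\psi(x) = \psi(x,\xi)$; substituting $\psi(y,\xi) = |V(y)|^{1/2} e(y,\xi)$ back into the defining integral for $e$ yields exactly (\ref{LS_eq}). Uniqueness of $e$ as a solution of (\ref{LS_eq}) with $V(\cdot)e(\cdot,\xi)\in L^1$ follows by reversing this correspondence: any such solution produces, via $\psi := |V|^{1/2}e$, a solution of (\ref{LS_mod_eq}) in $L^2$, which is unique off $\mathcal{E}_0$. I expect the main obstacle to be the measure-zero argument --- specifically, setting up the analytic Fredholm machinery cleanly (continuity of $\xi\mapsto K_\xi$ in Hilbert--Schmidt norm near $\xi=0$, and the fact that the free resolvent kernel is only continuous, not analytic, at $|\xi|=0$, which is why $\mathcal{E}_0$ is taken inside $(0,\infty)$ rather than $[0,\infty)$).
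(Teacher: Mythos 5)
Your overall strategy coincides with the paper's: rewrite (\ref{LS_mod_eq}) as $(I-K_\xi)\psi=g_\xi$ with $K_\xi$ the Hilbert--Schmidt Birman--Schwinger operator (the paper's $-T(|\xi|)$), invert by Neumann series once $|\xi|>\tfrac12\Vert V\Vert_{L^1}$ — your explicit bound locating $\mathcal{E}_0$ inside $(0,\tfrac12\Vert V\Vert_{L^1}]$ is a nice touch the paper leaves implicit — and then pass between $\psi$ and $e$ via $|V|^{1/2}e=\psi$, $V^{1/2}\psi=Ve\in L^1$. That part, together with your regularity and uniqueness discussion for $e(\cdot,\xi)$, is correct and matches the paper.

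The gap is in the measure-zero argument. You propose to treat $\xi\mapsto K_\xi$ as a (real-)analytic family of compact operators on the punctured real line and invoke the standard analytic Fredholm theorem to conclude that the exceptional set is \emph{discrete}. For $V$ merely in $L^1$ this analyticity fails: the $\xi$-dependence enters through $e^{i|\xi|\,|x-y|}$, and any continuation to complex $w$ with $\Im w<0$ puts the factor $e^{2|\Im w|\,|x-y|}$ into the Hilbert--Schmidt integrand, which is not integrable against $|V(x)|\,|V(y)|$ without decay assumptions on $V$; equivalently, the Taylor coefficients in $\xi$ bring down powers $|x-y|^n$, so the series does not converge in operator norm. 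Hence $w\mapsto K_w$ is analytic only in the open upper half-plane and merely \emph{continuous} up to $\partial U\setminus\{0\}$: the positive real axis is the boundary of the region of analyticity, not part of it, so the usual analytic Fredholm theorem gives you nothing on $(0,\infty)$, and discreteness of the exceptional set is not what one obtains. The tool actually needed is the boundary-value variant stated as Lemma \ref{lem:Fredholm} (cf.\ \cite[XI.6]{RS}): for a family analytic in $\{\Im w>0\}$, continuous on the closure minus the origin, and invertible for large $|w|$, the non-invertibility set is a closed subset of $\overline{U}$ whose trace on the real axis has Lebesgue measure zero. This yields exactly the bounded, closed, measure-zero (but not necessarily discrete) set $\mathcal{E}_0$. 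You flagged the difficulty but located it only at $\xi=0$; in fact the loss of analyticity occurs along the entire real axis.
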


\begin{proof}
For each $w \in U:=\{ \mathcal{ I}m\, w > 0\} $, let $T(w):= |V|^{1/2} (H_0-
w^2)^{-1} V^{1/2}$. Since $V\in L^1  $, $T(w)$ is a bounded 
linear operator on $L^2$ with integral kernel 
$$
T_w (x,y) = - (2iw)^{-1} \vert V(x) \vert^{1/2} e^{iw \vert x-y \vert}
V^{1/2}(y).
$$
Clearly $T : U \rightarrow {\cal B}(L^2)$ is an analytic
function of $w$ and has continuous extension to 
$\partial U\setminus \{0\}$.
For each $w\in \overline{U}\setminus \{0\}$, $T(w)$ is a Hilbert-Schmidt operator.

Equation
(\ref{LS_mod_eq}) has the form $(I + T(\vert \xi \vert))a = b$, 
where $b \in L^2$ denotes
the function $\vert V(x) \vert^{1/2} e^{ix\xi}$ and $a$ represents the 
desired solution $\psi(x,\xi)$. 
Because of the factor $w^{-1}$ in the definition of $T_w(x,y)$ 
 there exists some $M > 0$ such that $T(w)$
is a contraction on $L^2$ for all $w \in \C$ with ${\cal I}m\ w \geq 0$, 
$\vert w \vert > M$.  
So, the operator 
$(I + T(w))^{-1} \in {\cal B}(L^2)$ 
exists whenever ${\cal I}m\ w \geq 0$, $\vert w \vert > M$. 

By a version of the analytical Fredholm theorem (Lemma \ref{lem:Fredholm}) there exists a bounded subset of measure zero
$S \subset \R \setminus \{ 0 \}$ such that  
the operator 
$(I + T(w))^{-1} \in {\cal B}(L^2)$ exists  for all $w \notin S$ with $S\cup \{0\} $ being closed. This proves the first statement in 
the lemma, if we take ${\cal E}_0$ \emph{to be the set  of positive real points in  $S$}. 

Now suppose $\vert \xi \vert \in (0,\infty) \setminus {\cal E}_0$  
and let $\psi(\cdot,\xi) \in L^2$
be the solution to (\ref{LS_mod_eq}). Since $V^{1/2}(\cdot) \psi(\cdot,\xi)
\in L^1$, the integral 
$$
\int_{-\infty}^\infty e^{i \vert \xi \vert  
\vert x-y \vert} V^{1/2}(y) \psi(y,\xi) \, dy
$$ 
converges for all $x$, and defines a bounded (locally) absolute continuous function of $x$. 
Note that
the function $e(x,\xi) \in AC_{loc} $ defined in the second statement of the lemma satisfies
\begin{align}
V(x) e(x,\xi) =& V^{1/2}(x) \psi(x,\xi) \in L^1  \label{eq:Ve} 
\end{align} 
and that (\ref{LS_mod_eq}) implies that $\mathrm{supp} \ \psi(\cdot, \xi) \subset \mathrm{supp}\  V(\cdot) $ 
if   $| \xi | \in \R\setminus {\cal E}_0$.  
It follows immediately that $e(x,\xi)$ is a solution to 
equation (\ref{LS_eq}). Finally, according to the remark prior to  (\ref{LS_mod_eq}) 
(or by a direct calculation), we see that the solution $e(x,\xi)$ is in fact
differentiable a.e in the $x$ variable.  \end{proof}  

The existence of $(I {\color{red}+}T(w))^{-1}$ on $\C_+ \setminus S$ depends on the following variant of analytic Fredholm theorem.

\begin{lemma}\label{lem:Fredholm} If $T(z)$ is a family of compact operators that is
analytic in $U={\Im z>0}$ and continuous on 
$\partial U\setminus \{0\} $, then either $(I+T(w))^{-1}$ does not exist
for any $z\in {U}$ or  $(I+T(w))^{-1}$ exist for all $z\in \overline{U}$ 
except
a closed subset (counting the origin) in $U\cup\partial U$ of measure zero.
\end{lemma}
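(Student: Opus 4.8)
The plan is to reduce this operator-valued alternative to the behaviour of a single scalar analytic function --- a regularized Fredholm determinant --- and then to control its zeros on the real boundary $\partial U\setminus\{0\}$ by a boundary-uniqueness theorem for analytic functions. In the case that matters here (in particular in Lemma \ref{l:e(xxi):E0}) the operators $T(w)$ are Hilbert--Schmidt for $w\in\overline U\setminus\{0\}$, so I would set $h(w):=\det_{2}\!\bigl(I+T(w)\bigr)$, the $2$-regularized determinant. Since $w\mapsto T(w)$ is analytic into $U$ and --- as one checks from the explicit kernel bound --- continuous into the Hilbert--Schmidt ideal on $\overline U\setminus\{0\}$, while $\det_{2}$ is a norm-continuous, Fr\'echet-analytic functional on that ideal, the composite $h$ is analytic in $U$, continuous on $\overline U\setminus\{0\}$, and the basic property of $\det_{2}$ says precisely that $I+T(w)$ is invertible if and only if $h(w)\ne 0$. (For a general compact-operator family one does not have a single determinant, but the classical local reduction --- split $T(w)$ near each point into a part of norm $<1$ plus a finite-rank part, reducing invertibility of $I+T(w)$ to the non-vanishing of a scalar finite-dimensional determinant $g_{w_{0}}(w)$ which is analytic in $U$ and continuous up to $\partial U\setminus\{0\}$ near $w_{0}$ --- works the same way, and everything below applies verbatim to $g_{w_{0}}$ in place of $h$.) Thus the lemma becomes a statement about $Z:=\{w\in\overline U\setminus\{0\}:h(w)=0\}$.

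Next I would split according to whether $h\equiv 0$ on $U$. If it does, then $I+T(w)$ is non-invertible for every $w\in U$, which is the first alternative. If $h\not\equiv 0$ on the connected open set $U$, then the zeros of $h$ in $U$ form a discrete set, so $Z\cap U$ is countable and planar-null; this part is just the ordinary analytic Fredholm alternative. The genuinely new point --- and the main obstacle --- is to show that the boundary trace $Z\cap\R=\{x\in\R\setminus\{0\}:h(x)=0\}$ has one-dimensional Lebesgue measure zero, since on $\R\setminus\{0\}$ the function $h$ is merely continuous, not analytic. I would argue by contradiction: cover $\R\setminus\{0\}$ by countably many bounded open intervals $J$ with $\overline J\subset\R\setminus\{0\}$; if some $J$ had $|Z\cap J|>0$, pick a small disk $D$ centered at an interior point of $J$ with $\overline D\subset\overline U\setminus\{0\}$ and $\overline D\cap\R\subset J$, and consider the half-disk $D^{+}:=D\cap U$. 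On $D^{+}$ the function $h$ is analytic and bounded (being continuous on the compact set $\overline{D^{+}}$), it extends continuously to $\overline{D^{+}}$, and it vanishes on a subset of positive one-dimensional measure of the diameter of $D^{+}$. By the classical boundary-uniqueness theorem (the Luzin--Privalov circle of ideas: transport $D^{+}$ to the unit disk by a conformal map, which is bi-Lipschitz near the open diameter and hence preserves positive boundary measure there, and use that a function in the disk algebra with $\log|h|\notin L^{1}$ of the circle must vanish identically) one concludes $h\equiv 0$ on $D^{+}$, hence $h\equiv 0$ on all of $U$ by analyticity and connectedness, contradicting $h\not\equiv 0$. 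Therefore $|Z\cap\R|=0$.

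Finally I would assemble the statement: with $S:=Z$, the inverse $(I+T(w))^{-1}$ exists for every $w\in\overline U\setminus\{0\}$ outside $S$; the set $S$ is relatively closed in $\overline U\setminus\{0\}$ because $h$ is continuous there, so $S\cup\{0\}$ is closed in $\C$ --- which is the meaning of ``closed, counting the origin'' --- and $S$ has planar measure zero, being the union of the countable set $Z\cap U$ and the line-null set $Z\cap\R$. For the application in Lemma \ref{l:e(xxi):E0} one adds the elementary remark that $\|T(w)\|_{\mathrm{HS}}\le(2|w|)^{-1}\|V\|_{1}<1$ once $|w|$ is large, so that $S$ is in fact bounded, and then takes $\mathcal{E}_{0}$ to be its set of positive real points. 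I expect the entire difficulty to lie in the boundary step: the interior is the classical analytic Fredholm theorem, but propagating invertibility of $I+T(w)$ down to almost every real $w$ hinges on the fact that the scalar function $h$, merely continuous on the boundary, cannot vanish on a set of positive measure without vanishing identically, and this is exactly where boundary uniqueness for bounded analytic functions enters.
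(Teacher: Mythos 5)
Your proposal is essentially correct, but note that the paper does not actually prove this lemma: it is stated and then, in Remark \ref{re:mero-fredholm}, deferred to the analytic Fredholm theorem of [RS, XI.6] and [Kato66, VII.1], so there is no in-text proof to compare against. What you supply is the standard argument from the limiting-absorption literature, and it is the right one: reduce invertibility of $I+T(w)$ to the non-vanishing of a scalar function that is analytic in $U$ and continuous on $\overline{U}\setminus\{0\}$ --- globally via the regularized determinant $\det_2$ when $T(w)$ is Hilbert--Schmidt (as it is for the operator in Lemma \ref{l:e(xxi):E0}), or locally via the finite-rank splitting for a general compact family --- and then invoke boundary uniqueness (Luzin--Privalov, via a conformal map of the half-disk) to conclude that the real boundary zero set is Lebesgue-null unless the function vanishes identically. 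You correctly isolate the only genuinely non-classical step, namely this boundary uniqueness argument; the interior part is the ordinary analytic Fredholm alternative, and relative closedness of $S$ (hence closedness of $S\cup\{0\}$) follows from norm continuity of $T$ together with openness of the set of invertible operators. Two small points to tighten: (i) for the $\det_2$ route you need $w\mapsto T(w)$ to be analytic and continuous into the Hilbert--Schmidt ideal rather than merely in operator norm --- this does hold for the kernel $T_w(x,y)=-(2iw)^{-1}\vert V(x)\vert^{1/2}e^{iw\vert x-y\vert}V^{1/2}(y)$ with $V\in L^1$ by dominated convergence, but it is an extra hypothesis beyond the lemma's literal statement, so in the general case the local finite-rank reduction you mention is the one that carries the proof; (ii) when you choose the disk $D$ you should center it at a Lebesgue density point of $Z\cap J$ (or cover $J$ by countably many such disks and use subadditivity) to guarantee that $Z$ meets the diameter of $D$ in a set of positive one-dimensional measure.
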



\begin{remark}\label{re:mero-fredholm}  This is a slightly improved version of the Fredholm theorem. 
The analytic Fredholm of this type  can be found in \cite[XI.6]{RS}. 
For such results on a general complex domain,  see   \cite[Theorem 1.9, VII.1]{Kato66}. 
For a meromorphic version see Theorem XIII.13 in \cite{RS}. 
\end{remark}

\vspace{.2in}
\noindent 
{\bf Notation:} Let ${\cal E} = \{ \xi \in \R: \vert \xi \vert \in 
{\cal E}_0 \} \cup \{ 0 \}$.  
Henceforth, when $ \xi \in  \R 
\setminus {\cal E}$, $e(x,\xi)$ 
denotes the solution to equation 
(\ref{LS_eq}) constructed in Lemma \ref{l:e(xxi):E0}. 
Note that $\cE_0\cup \{0\}$ is a bounded closed set of measure zero. 

It is easy to show that any  embedded eigenvalues, if any, would belong to the exceptional set $\cE_0$  \cite[XI, Problem 62]{RS}. 
\begin{lemma}\label{lem:+ve-eigenval} Let $E>0$ with $\sqrt{E}\notin \cal{E}_0$. 
Then $E$ is not an eigenvalue of $H$.    
\end{lemma}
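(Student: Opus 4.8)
The plan is to argue by contradiction. Suppose $E > 0$, $\sqrt{E} \notin \cE_0$, and $E$ is an eigenvalue of $H$ with eigenfunction $u \in \cD(H) = W^{2,2}$, so that $(-u'' + Vu) = Eu$ with $u \in L^2$ and $u \ne 0$. Set $\xi = \sqrt{E}$. The first step is to feed $u$ into the resolvent identity for $H_0$: since $(H_0 - E)u = -Vu$ in the sense of distributions and $Vu \in L^1 \cap L^2$ (because $V \in L^2$ and $u \in L^\infty$ by Sobolev embedding in one dimension, or just $V \in L^1$ and $u \in L^\infty$), I want to write $u = -R_0(E \pm i0)(Vu)$ as a boundary value. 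The point is that $u$ is a genuine $L^2$ eigenfunction, so the singular part of the resolvent must not contribute; concretely, $\langle (H_0 - E - i\eps)^{-1}(Vu), Vu\rangle$ has to stay bounded as $\eps \to 0^+$, and the standard computation shows this forces $\widehat{Vu}(\pm\xi) = 0$, i.e. the Fourier transform of $Vu$ vanishes at the two points $\pm \xi$ on the characteristic variety. This is the one-dimensional analogue of the Agmon–Kato–Kuroda vanishing argument (see \cite[XI, Problem 62]{RS} and \cite[Theorem XIII.58]{RS}).

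Next I translate this into the language of the modified Lippmann–Schwinger operator $T(w) = |V|^{1/2}(H_0 - w^2)^{-1}V^{1/2}$ from Lemma \ref{l:e(xxi):E0}. Define $\psi := |V|^{1/2} u \in L^2$ (this is in $L^2$ since $V \in L^1$ and $u \in L^\infty$, in fact $V \in L^2$ already gives $|V|^{1/2} u \in L^2$ more easily, but we only need $L^1$). From $u = -R_0(E+i0)(Vu) = -R_0(E+i0)(V^{1/2}\psi)$ and the vanishing of $\widehat{Vu}$ at $\pm\xi$ — which is exactly the statement that the boundary values $R_0(E+i0)$ and $R_0(E-i0)$ agree when applied to $V^{1/2}\psi$ — one checks that $\psi = -T(\xi)\psi$, i.e. $(I + T(\xi))\psi = 0$. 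But $\sqrt{E} = \xi \notin \cE_0$, and by the construction in Lemma \ref{l:e(xxi):E0} (via the analytic Fredholm Lemma \ref{lem:Fredholm}) the operator $(I + T(\xi))^{-1} \in \cB(L^2)$ exists precisely off the exceptional set. Hence $\psi = 0$, which gives $V^{1/2}\psi = Vu = 0$ a.e., so $(H_0 - E)u = 0$ in $\cD'$; but $H_0 = -d^2/dx^2$ has no $L^2$ eigenfunctions (its spectrum is purely absolutely continuous, or directly: $\widehat{u}$ supported on $\{|\xi|^2 = E\}$, a null set, forces $\widehat u = 0$). This contradicts $u \ne 0$.

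The main obstacle — and the step deserving genuine care rather than a one-line wave of the hand — is the passage from ``$u$ is an $L^2$ eigenfunction'' to ``$\widehat{Vu}$ vanishes at $\pm\xi$'', equivalently that $R_0(E \pm i0)(V^{1/2}\psi)$ are well-defined and equal. One has to justify taking the limit $\eps \to 0^+$: from $(I + T(E+i\eps))\psi_\eps$ type estimates, or more directly by pairing $(H_0 - E - i\eps)^{-1}Vu$ against $\overline{Vu}$ and using that $u = (H_0 - E - i\eps)^{-1}(Vu) + (\text{something} \to 0)$ is impossible to blow up because $u$ is a fixed $L^2$ function — so $\Im \langle (H_0 - E - i\eps)^{-1}Vu, Vu\rangle = \eps \|(H_0-E-i\eps)^{-1}Vu\|_2^2$ stays bounded, and the explicit Poisson-kernel form of $\Im R_0$ on the line then yields $\widehat{Vu}(\pm\xi) = 0$. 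A clean alternative, which I would actually prefer for this exposition, is to bypass the Fourier computation: observe that $u \in L^2$ solves $-u'' + Vu = Eu$, use the fact that outside $\supp V$ (if $V$ were compactly supported) or asymptotically (using $V \in L^1$, via a Jost-type argument) $u$ behaves like a combination of $e^{\pm i\xi x}$, and an $L^2$ solution must have both asymptotic coefficients zero at $\pm\infty$; then $|V|^{1/2}u$ solves the homogeneous modified equation, contradicting invertibility of $I + T(\xi)$ off $\cE_0$. Either route works; the Fourier/AKK route is the more robust one for merely $L^1 \cap L^2$ potentials.
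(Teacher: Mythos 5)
Your proof is correct, and it is precisely the standard Agmon--Kato--Kuroda argument that the paper itself defers to by citing \cite[XI, Problem 62]{RS} without supplying details: from $(H_0-E)u=-Vu$ with $u\in W^{2,2}\subset L^\infty$ one gets $\widehat{Vu}(\pm\sqrt{E})=0$ (most directly because $\hat u=-\widehat{Vu}/(\zeta^2-E)\in L^2$ and $\widehat{Vu}$ is continuous), hence $u=-R_0(E+i0)(Vu)$ after ruling out the plane-wave homogeneous part via decay at infinity, so $(I+T(\sqrt{E}))|V|^{1/2}u=0$, contradicting the invertibility of $I+T(w)$ off $\cE_0$ from Lemma \ref{l:e(xxi):E0}. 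The one step genuinely needing care --- identifying $u$ with $-R_0(E+i0)(Vu)$ rather than that plus a combination of $e^{\pm i\sqrt{E}x}$ --- is exactly the step you flag, and your treatment of it is adequate.
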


\vspace{.20in}

\section{The resolvent for $H_V$}\label{s:resolvent:Hv}

The next few lemmas concern the resolvent of $H=H_V$ when $V\in L^1\cap L^2$ is real-valued. 
From the beginning of section \ref{s:L2} we know that  $\sigma(H) =\sigma_{ac}\cup \sigma_{pp}$, 
where $\sigma_{ac}(H)=[0,\iy)$, $\sigma_{sc}(H)=\emptyset$ and
$\sigma_{pp}(H)=\{\lam_1, \lam_{2},\dots\}\subset  (-\infty, 0)$. 

\vspace{.20in}

If $z \in \C \setminus [0,\infty)$, we let 
$R(z) = (H - z)^{-1}$ and $R_0(z) = (H_0 - z)^{-1}$. The unperturbed resolvent
$R_0(z)$ has an integral kernel 
 $$
G_0(x,y;z) = -(2i z^{1/2})^{-1} e^{i z^{1/2} \vert x-y \vert}
$$
where the value of the square root $z^{1/2}$ is taken to have positive 
imaginary part. 

\begin{lemma}\label{L:inverse} Let $V\in L^1$. 
Let $z \in \C \setminus \sigma(H)$.
Then the operator $I + \vert V \vert^{1/2} R_0(z) V^{1/2} \in {\cal B}
(L^2)$ has a bounded inverse. 
\end{lemma}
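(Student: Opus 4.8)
The plan is to establish invertibility by the standard dichotomy: since $|V|^{1/2} R_0(z) V^{1/2}$ is compact (in fact Hilbert--Schmidt) for $z \in \C \setminus [0,\infty)$, the operator $I + |V|^{1/2} R_0(z) V^{1/2}$ is invertible on $L^2$ precisely when its kernel is trivial, i.e. when $-1$ is not an eigenvalue. So the task reduces to showing that for $z \notin \sigma(H)$ no nonzero $g \in L^2$ satisfies $g = -|V|^{1/2} R_0(z) V^{1/2} g$. First I would record the compactness: because $V \in L^1$, the kernel $-(2iz^{1/2})^{-1}|V(x)|^{1/2} e^{iz^{1/2}|x-y|} V^{1/2}(y)$ has finite $L^2(\R^2)$ norm (bounded by $C|z|^{-1}\|V\|_1$, using $\Im z^{1/2}>0$ so the exponential is $\le 1$), hence $|V|^{1/2} R_0(z) V^{1/2}$ is Hilbert--Schmidt, and the analytic/Fredholm alternative applies.

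Next I would run the eigenvector argument. Suppose $g \in L^2$, $g \neq 0$, with $(I + |V|^{1/2} R_0(z) V^{1/2}) g = 0$. Set $u := R_0(z) V^{1/2} g$. Then $u \in \cD(H_0) = W^{2,2}$ and $(H_0 - z) u = V^{1/2} g$. The relation $g = -|V|^{1/2} u$ gives $V^{1/2} g = -V^{1/2}|V|^{1/2} u = -V u$ (using $V^{1/2}(x)|V(x)|^{1/2} = V(x)$), so $(H_0 - z)u = -Vu$, i.e. $(H - z)u = 0$. Here one must check $u \not\equiv 0$: if $u = 0$ then $V^{1/2} g = (H_0-z)u = 0$, whence $g = -|V|^{1/2}u$... more directly, $g = -|V|^{1/2}R_0(z)V^{1/2}g$ and $R_0(z)V^{1/2}g = u = 0$ forces $g = 0$, a contradiction. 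Thus $u$ is a nonzero element of $\cD(H) = W^{2,2}$ with $Hu = zu$, so $z$ is an eigenvalue of $H$, contradicting $z \notin \sigma(H)$. Therefore the kernel is trivial and $I + |V|^{1/2} R_0(z) V^{1/2}$ is invertible; its inverse is bounded by the Fredholm alternative (or the open mapping theorem together with injectivity of a Fredholm operator of index zero).

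The main obstacle — really the only subtle point — is the bookkeeping with the factorization $V = V^{1/2} |V|^{1/2}$ and the sign conventions, together with making sure the candidate function $u = R_0(z)V^{1/2}g$ genuinely lands in $\cD(H)$ and is genuinely nonzero, so that the contradiction with $z \notin \sigma(H)$ is legitimate. One should also be slightly careful that $V^{1/2} g \in L^2$ so that $R_0(z)$ can be applied: this holds because $g \in L^2$ and $|V^{1/2}| = |V|^{1/2} \le 1 + |V| \in L^1 + L^\infty$... more precisely $\|V^{1/2}g\|_2 \le \|V^{1/2}\|_4 \|g\|_4$ is not quite right either; the clean way is to note $V^{1/2}g = V^{1/2}|V|^{1/2} \cdot (|V|^{-1/2}g)$ only on $\supp V$, so instead just observe directly that the Hilbert--Schmidt operator $|V|^{1/2}R_0(z)V^{1/2}$ maps into $L^2$ and argue with $w := R_0(z)V^{1/2}g$ as the image under the \emph{bounded} operator $R_0(z) V^{1/2}|V|^{-1/2}$ composed appropriately — or, cleanest of all, phrase the whole eigenvector computation through $h := |V|^{1/2}u \in L^2$ and avoid ever writing $V^{1/2}g$ in isolation. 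I would adopt whichever of these formulations is least cluttered; the mathematical content is unchanged.
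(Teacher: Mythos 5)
Your overall strategy coincides with the paper's: observe that $T(z):=|V|^{1/2}R_0(z)V^{1/2}$ is Hilbert--Schmidt, reduce invertibility to triviality of the kernel, and rule out a nonzero kernel element $g$ by showing that $u:=R_0(z)V^{1/2}g$ would be a nonzero eigenvector of $H$ with eigenvalue $z\notin\sigma(H)$. Your use of the Fredholm alternative to pass from injectivity of $I+T(z)$ to bounded invertibility is a legitimate (and slightly slicker) substitute for the paper's route, which instead exhibits the explicit candidate solution $(I-|V|^{1/2}R(z)V^{1/2})h$ via the second resolvent identity and then invokes the inverse mapping theorem. The nondegeneracy check $u\neq 0$ is handled the same way in both arguments.

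There is, however, a genuine gap at exactly the point you flag but do not close: the assertion that $u=R_0(z)V^{1/2}g$ lies in $\cD(H_0)=W^{2,2}$. For $V\in L^1$ and $g\in L^2$, Cauchy--Schwarz gives only $V^{1/2}g\in L^1$, not $L^2$ (try $V=|x|^{-1+\eps}\chi_{[-1,1]}$ and $g=|x|^{-1/2+\de}\chi_{[-1,1]}$ with $\eps+2\de<1$), so $R_0(z)V^{1/2}g$ is only an $L^q$ function and the identity $(H-z)u=0$ is a priori merely distributional; it does not yet exhibit $z$ as an eigenvalue of the self-adjoint operator $H$. None of your proposed repairs resolves this: renaming $h:=|V|^{1/2}u$ is just $-g$ and does not put $u$ in the operator domain, and the H\"older estimate $\Vert V^{1/2}g\Vert_2\le\Vert V^{1/2}\Vert_4\Vert g\Vert_4$ requires hypotheses you do not have. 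The paper devotes the entire second half of its proof to precisely this issue: it first treats $V\in L^1\cap L^\infty$ (where $V^{1/2}g\in L^2$ is automatic and $u\in\cD(H_0)$), and then for general $V$ approximates by $V_n\in C_0^\infty$ with $|V_n|\le|V|$, shows $R_0(z)V_n^{1/2}g\to u$ and $(H_0-z)u=V^{1/2}g$ in the sense of distributions, and finally upgrades $u$ to an element of $\cD(H)$ by elliptic regularity, using that $u$ is bounded (being $R_0(z)$ applied to an $L^1$ function) so that $Vu=-V^{1/2}g$ is square integrable. To complete your proof you would need to supply an argument of this kind, or else show directly that a nonzero form-domain weak solution of $(H-z)u=0$ already forces $z\in\sigma(H)$.
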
 

\noindent
\begin{proof} 
Let $T_z$ denote $I + \vert V \vert^{1/2} R_0(z) V^{1/2}$. 
We  prove that $T_z$ is a bijection on $L^2$. 
We divide the proof of the lemma in two steps.
\begin{enumerate}
\item[(a)]\label{ia}  First we assume $V\in L^1\cap L^\iy$. 
Suppose $f \in L^2$ is such that $T_{z}f = 0$.  
 This means that $\vert V \vert^{1/2} R_0(z) V^{1/2} f = -f$.
 Let $g = R_0(z) V^{1/2} f$ . Since $V^{1/2}f\in L^2$, we see  $g\in {\cal D}(H_0)$. 
  It is easy to verify that  $Vg = -(H_0 - z)g$. Hence 
\begin{equation}\label{g:H:z}
 Hg=zg \quad in \ L^2\,.
 \end{equation} 
 Since  $z$ is not in the spectrum of $H$, it must be that $g = 0$. 
 Since $f = -\vert V \vert^{1/2}g$, it follows that $f = 0$. This shows that $T_{z}$ is injective. 
  A similar calculation shows that $T_{z}$ is surjective by noting that 
  $f= (I- |V|^{1/2} R(z) V^{1/2}) h $ is a solution to $ T_{z}f=h $ 
  in view of the second resolvent identity (\ref{resolvent_eq}).
  Thus, it follows from the inverse mapping theorem that $T_{z}$ has a bounded inverse.   

\item[(b)]\label{i:(b)} Now  we relax the condition in (a) to assume $V\in L^1$ only.   
We shall prove the results in part (a)
using a limiting argument  with $C^\infty_0\ni V_n {\rightarrow} V$ a.e. and in  $L^1$.
Let $f\in \mathrm{Ker} \ T_z$.  Let $g=R_0(z)V^{1/2}f$.  
First show $g\in D(H)\subset Q(H)=Q(H_0)$. 
Let $C^\infty_0\supset \{V_n\}\rightarrow V\;\text{ in}\; L^1\; \text{and}\; a.e.$
with $|V_n(x)|\le |V(x)|$. 
Then $g_n=R_0(z)V_n^{1/2}f \in D(H_0)$. 
Note $R_0(z)V_n^{1/2}f \rightarrow R_0(z){V^{1/2}}f=g\in L^2$ 
(since $V^{1/2}f\in L^1 $ and $R_0(z): L^1\rightarrow L^q$, $\forall q\ge 1$). 
 The convergence is in operator norm for  $R_0(z)V_n^{1/2}$ hence in $L^2$ when it is applied to $f$.  
This can be easily observed from the kernel 
of $R_0(z)V_n^{1/2}-R_0(z)V^{1/2}$ that is equal to 
\begin{equation*}
-\frac{1}{2iz} e^{iz^{1/2}|x-y|} (V_n^{1/2}-V^{1/2})\,,
\end{equation*}
which actually converges to zero in the Hilbert-Schmidt norm.  We have for all $h\in C^\infty_0$,  
\begin{align*}
&V_n^{1/2}f =(H_0-z)g_n \ {\Rightarrow} \ \langle V_n^{1/2}f, h\rangle
=\langle (H_0-z)g_n, h\rangle\\
=&\langle g_n, (H_0-{z}) h\rangle
\rightarrow \langle g, (H_0-{z}) h\rangle\,.
\end{align*}
On the left hand side, $\int V_n^{1/2}f h \rightarrow \int V^{1/2}f h dx = \la V^{1/2}f, h\ra$ 
by Lebesgue's dominated convergence theorem, noting that $V^{1/2}f\in L^1$.  
We have shown that 
$$
L^1\ni V^{1/2}f= (H_0-z)g \qquad \text{in the sense of distributions}.
$$
 Now $T_z f=0$ implies that $ Vg=-V^{1/2}f$, i.e., 
\[  \la Vg, h\ra= -\la (H_0-z)g, h\ra. \]
Therefore,  $Vg= -(H_0-z)g $ in distributions.
This  implies $g\in D(H)$ by regularity theorem for elliptic equations since $Vg=-V^{1/2}f\in L^2$. 
The remaining part of the proof goes the same as in part (a).
\end{enumerate} 
\end{proof}






\begin{lemma}\label{L:R(z)}  Let $V\in L^1\cap L^2$. 
Let $z \in \C \setminus \sigma(H)$. 
Then there exists a measurable function $G(x,y;z)$ on $\R \times \R$ such 
that 
$$ 
(R(z)f)(x) = \int_{-\infty}^\infty G(x,y;z) f(y) \, dy
$$
for all $f \in L^2$. Moreover: \n\n
\noindent 
(i) $G(x,\cdot \, ;z) \in L^1 \cap L^2$ for almost every 
$x \in \R$. \n\n 
\noindent
(ii) $G(x,y;z) = G(y,x;z)$ and $\overline{G(x,y;z)} = 
G(x,y;\overline{z})$ for almost every $(x,y) \in \R^2$.  \n\n 
\noindent
(iii) $G$ satisfies the equation 
$$
G(x,y;z) = G_0(x,y;z) - \int_{-\infty}^\infty G_0(x,w;z) V(w) 
G(w,y;z) \, dw
$$
for almost every $(x,y) \in \R^2$. 
\end{lemma}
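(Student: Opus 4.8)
The plan is to obtain the integral kernel $G(x,y;z)$ of $R(z)$ by expanding the second resolvent identity and using the factorization through $|V|^{1/2}$ and $V^{1/2}$ that appears in Lemmas~\ref{L:inverse} and~\ref{l:e(xxi):E0}. Concretely, write $R(z) = R_0(z) - R_0(z) V R(z)$, and factor $V = V^{1/2} |V|^{1/2}$ (with the sign convention $V^{1/2} = |V|^{1/2}\,\mathrm{sgn}\,V$). Introduce the Hilbert--Schmidt operator $T(z) = |V|^{1/2} R_0(z) V^{1/2}$, which by Lemma~\ref{L:inverse} has bounded inverse $(I+T(z))^{-1}$ on $L^2$ for $z \notin \sigma(H)$. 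Then iterating the resolvent identity once more gives the closed formula
\begin{align*}
R(z) = R_0(z) - R_0(z) V^{1/2} (I + T(z))^{-1} |V|^{1/2} R_0(z),
\end{align*}
and the kernel $G(x,y;z)$ is simply read off from this: $G = G_0 - (R_0(z) V^{1/2}) \circ (I+T(z))^{-1} \circ (|V|^{1/2} R_0(z))$, where the outer factors have explicit kernels $G_0(x,w;z) V^{1/2}(w)$ and $|V|^{1/2}(w) G_0(w,y;z)$ respectively. Since $V \in L^1 \cap L^2$, one has $|V|^{1/2} \in L^2$, and the kernel $|V|^{1/2}(x) G_0(x,y;z) V^{1/2}(y)$ of $T(z)$ is in $L^2(\R^2)$ (using the bound $|G_0(x,y;z)| \le (2|z^{1/2}|)^{-1}$ on $\overline{U}\setminus\{0\}$, or exponential decay off the real axis). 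This makes the composition above well-defined with a genuine $L^2_{\mathrm{loc}}$, indeed measurable, kernel.

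For part~(i), I would estimate $G(x,\cdot\,;z)$ by splitting the two terms. The free kernel $G_0(x,\cdot\,;z)$ is in $L^1 \cap L^2$ when $\Im z^{1/2} > 0$ because of the exponential decay $e^{i z^{1/2}|x-y|}$. For the correction term, note that $(I+T(z))^{-1}|V|^{1/2} R_0(z)$ maps $L^2 \to L^2$ boundedly, so $w \mapsto (R_0(z) V^{1/2} h)(x)$ composed appropriately yields, for a.e.\ $x$, a function of $y$ of the form $\int G_0(x,w;z) V^{1/2}(w) k(w,y)\,dw$ where $k(\cdot,y) \in L^2$; then one controls the $L^1 \cap L^2$ norm in $y$ using that $G_0(x,\cdot\,;z) V^{1/2}(\cdot) \in L^2$ (again by $V^{1/2}\in L^2$ and boundedness of $G_0$) together with Minkowski's integral inequality and the $L^2$-boundedness of $(I+T(z))^{-1}$. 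Part~(ii) follows from the corresponding symmetries of $G_0$: the reality relation $\overline{G_0(x,y;z)} = G_0(x,y;\bar z)$ and symmetry $G_0(x,y;z) = G_0(y,x;z)$ are immediate from the explicit formula, and they propagate through the closed expression for $G$ because $T(\bar z) = T(z)^*$ (as $V^{1/2}$ and $|V|^{1/2}$ are real multiplication operators and $R_0(\bar z) = R_0(z)^*$), so $(I+T(\bar z))^{-1} = ((I+T(z))^{-1})^*$, matching the transpose structure of the product. Part~(iii) is then just the second resolvent identity $R(z) = R_0(z) - R_0(z) V R(z)$ transcribed at the level of kernels, using Fubini (justified since $G_0(x,\cdot\,;z) \in L^\infty$, $V \in L^1$, and $G(\cdot,y;z) \in L^1$ by part~(i)).

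The main obstacle I anticipate is the measurability and integrability bookkeeping in part~(i): one must argue that a bounded operator built as a three-fold composition, only one factor of which has an a priori nice kernel, actually has a jointly measurable kernel with the claimed $y$-integrability for a.e.\ $x$, rather than merely being bounded on $L^2$. The clean way around this is to keep everything in terms of the explicit factor kernels $G_0(x,w;z)V^{1/2}(w)$ and $|V|^{1/2}(w)G_0(w,y;z)$ — both of which lie in $L^2$ of the relevant variables — so that the composition with the bounded operator $(I+T(z))^{-1}$ in the middle produces, via Cauchy--Schwarz in the middle variable, a function that is manifestly measurable in $(x,y)$ and, for fixed $x$, dominated in $L^1_y \cap L^2_y$ by $\|G_0(x,\cdot\,;z)V^{1/2}\|_{L^2} \,\|(I+T(z))^{-1}\|\, \|\,|V|^{1/2}G_0(\cdot,y;z)\|_{L^2_w}$ times appropriate weights, the last factor being controlled in $L^1_y \cap L^2_y$ by the decay of $G_0$. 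Once this estimate is in hand, (ii) and (iii) are formal consequences.
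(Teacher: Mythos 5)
Your construction of the kernel follows essentially the paper's route: both start from the factored resolvent identity $R(z)-R_0(z)=-R_0(z)V^{1/2}\bigl(I+|V|^{1/2}R_0(z)V^{1/2}\bigr)^{-1}|V|^{1/2}R_0(z)$, observe that the outer factors are Hilbert--Schmidt because $V\in L^1$ and $G_0$ decays exponentially off the spectrum, and thereby obtain a measurable kernel $G=G_0+A$ with $A\in L^2(\R\times\R)$; parts (ii) and (iii) are then handled the same way in both arguments (duality plus $R(z)^*=R(\overline z)$, and the second resolvent identity tested against $f,g\in L^2$ with Fubini). The one genuine gap is the $L^1$ half of part (i). Your estimate bounds $|A(x,y;z)|$ by $\|G_0(x,\cdot\,;z)V^{1/2}\|_{L^2}\,\|(I+T(z))^{-1}\|\,\|\,|V|^{1/2}G_0(\cdot,y;z)\|_{L^2_w}$ and asserts that the last factor is controlled in $L^1_y$ ``by the decay of $G_0$.'' It is not: that factor equals $\bigl(\int |V(w)|\,|G_0(w,y;z)|^2\,dw\bigr)^{1/2}\lesssim \bigl((|V|*e^{-2\epsilon|\cdot|})(y)\bigr)^{1/2}$, and the square root of an $L^1$ convolution need not be integrable. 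For instance $V(w)=(1+|w|)^{-2}\in L^1\cap L^2$ gives $(|V|*e^{-2\epsilon|\cdot|})(y)\sim (1+|y|)^{-2}$, whose square root behaves like $(1+|y|)^{-1}\notin L^1_y$. So Cauchy--Schwarz in the middle variable yields $A(x,\cdot\,;z)\in L^2$, but not $L^1$.

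The paper closes this gap by proving (iii) and (ii) \emph{first} and then bootstrapping: by symmetry $\int|G(x,y;z)|\,dy=\int|G(y,x;z)|\,dy$, and the integral equation (iii) applied in the first variable bounds this by $\int|G_0(y,x;z)|\,dy+\iint|G_0(y,w;z)|\,|V(w)G(w,x;z)|\,dw\,dy$; the factor $V(\cdot)G(\cdot,x;z)$ is in $L^1$ for a.e.\ $x$ by Cauchy--Schwarz, since $V\in L^2$ and $G(\cdot,x;z)\in L^2$, and the exponential decay of $G_0$ does the rest. (Alternatively, you could repair your direct route by writing $(I+T)^{-1}=I-T(I+T)^{-1}$, so that the correction splits into $-\int G_0(x,w;z)V(w)G_0(w,y;z)\,dw$ plus a term whose middle factor $T(I+T)^{-1}$ has a genuine $L^2(\R^2)$ kernel; each piece is then integrable in $y$ using the exponential decay of the outer $G_0$ factors and Cauchy--Schwarz over the two middle variables.) Either way, an idea beyond the bound you wrote down is needed for the $L^1$ claim; note also that your Fubini justification in (iii) quotes the $L^1$ statement of (i), so the logical order of the three parts has to be arranged consistently with whichever fix you adopt.
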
 

\begin{proof}  Since $D(V)\supset D(H_0)$ using the resolvent equation 
\begin{equation} 
R(z) = R_0(z) - R_0(z) V R(z) 
\label{resolvent_eq}
\end{equation} 
and Lemma \ref{L:inverse}, it is easy to check the identity
$$
R(z) - R_0(z) = -R_0(z) V^{1/2}(I + \vert V \vert^{1/2} R_0(z) V^{1/2})^{-1}  
\vert V \vert^{1/2} R_0(z) 
$$
Since $V$ is integrable, the operator $\vert V \vert^{1/2} R_0(z)$ is 
Hilbert-Schmidt. Therefore $R(z) - R_0(z)$ is Hilbert-Schmidt, and has
an integral kernel $A(x,y;z)$, measurable as a function of 
$(x,y) \in \R^2$, such that  
$$
\int_{\R^2} \vert A(x,y;z) \vert^2 \, dx \times dy < \infty. 
$$ 
In particular 
$A(x,\cdot \, ;z) \in L^2$ for almost every $x$. 
Since $G_0(x,\cdot \, ;z) \in 
L^2$ for all $x$, $R(z)$ has an integral kernel 
$$
G(x,y;z) = A(x,y;z) + G_0(x,y;z), 
$$
measurable as a function of $(x,y) \in \R^2$, such that
$G(x,\cdot \, ;z) \in L^2$ for almost every $x$. 

Now according to the 
resolvent equation (\ref{resolvent_eq}), if $f,g \in L^2$, then 
\begin{eqnarray}
\lefteqn{ 
\int_{-\infty}^\infty \int_{-\infty}^\infty f(x) G(x,y;z) g(y) \, dy dx 
}  \nonumber \\ 
& & = \int_{-\infty}^\infty \int_{-\infty}^\infty f(x) G_0(x,y;z) g(y)
\, dy dx \nonumber \\ 
& & \ \ \ \ \ \ \ - \int_{-\infty}^\infty
\int_{-\infty}^\infty \int_{-\infty}^\infty f(x) G_0(x,w;z) V(w) 
G(w,y;z) g(y) \, dy dw dx. \label{G_eq}
\end{eqnarray}
These are {\em iterated} integrals with the indicated order of integration. 
However, let $c,\epsilon > 0$ be chosen such that $\vert G_0(x,y;z) \vert 
\leq c e^{-\epsilon \vert x-y \vert}$. Define $a \in L^1$ by $a(x) = 
e^{-\epsilon \vert x \vert}$. Then  
$$
\int_{-\infty}^\infty \int_{-\infty}^\infty \vert f(x) G_0(x,y;z) g(y)
\vert \, dy dx \leq c \int_{-\infty}^\infty \vert f(x)
\vert (a * \vert g \vert)(x) \, dx \leq c \Vert f \Vert_{L^2} \Vert g
\Vert_{L^2}.
$$
This shows that 
$$
\int_{-\infty}^\infty \int_{-\infty}^\infty f(x) G_0(x,y;z) g(y) \, dy dx 
= \int_{\R^2} G_0(x,y;z) \, f(x)g(y) \, dx \times dy.
$$
The bound 
$$
\int_{-\infty}^\infty \int_{-\infty}^\infty \vert f(x) A(x,y;z) g(y) \vert
\, dy dx \leq \Vert g \Vert_{L^2} \Vert f \Vert_{L^2}
( \int_{-\infty}^\infty \int_{-\infty}^\infty
\vert A(x,y;z) \vert^2 \, dy dx)^{1/2}
$$
shows that 
$$
\int_{-\infty}^\infty \int_{-\infty}^\infty f(x) G(x,y;z) g(y) \, dy dx 
= \int_{\R^2} G(x,y;z) \, f(x)g(y) \, dx \times dy.
$$
Finally, by similar arguments the triple integral 
in (\ref{G_eq}) can be written in the form 
$$
\int_{\R^2} \left( \int_{-\infty}^\infty G_0(x,w;z) V(w) G(w,y;z) 
\, dw \right) \, 
f(x)g(y) \, dx \times dy.
$$
The function $$
f(x)g(y) \, \int_{-\infty}^\infty G_0(x,w;z) V(w) G(w,y;z) \, dw
$$
is measurable as a function of $(x,y) \in \R^2$ whenever $f,g \in L^2$. 
It follows that 
$$
\int_{-\infty}^\infty G_0(x,w;z) V(w) G(w,y;z) \, dw
$$ 
itself is a measurable as a function of $(x,y) \in \R^2$. Now define
$$
B(x,y;z) = G(x,y;z) - G_0(x,y;z) + \int_{-\infty}^\infty G_0(x,w;z)
V(w) G(w,y;z) \, dw. 
$$
So far we know that if $f,g \in L^2$, then 
\begin{equation} 
\int_{\R^2} B(x,y;z) \, f(x)g(y) \, dx \times dy = 0. \label{B_eq}
\end{equation}  
For $M > 0$ define $B_M(x,y;z) = \chi_{[-M,M]^2}(x,y) B(x,y;z)$. It is easy
to check that $B_M \in L^2(\R^2)$ for each $M$. By a standard argument 
(\ref{B_eq}) implies that 
$$
\int_{[-M,M]^2} B_M(x,y;z) F(x,y) \, dx \times dy = 0
$$
for every $F \in L^2([-M,M]^2)$. Therefore $B_M(x,y;z) = 0$ for almost every 
$(x,y) \in \R^2$. Since this holds for all $M$, $B(x,y;z) = 0$ for 
almost every $(x,y) \in \R^2$. This proves $(iii)$.

To prove ${\emph (ii)}$, let $f,g \in L^2$ and write
\begin{eqnarray*}
\int_{-\infty}^\infty f(x) \, (R(z)g)(x) \, dx & = & 
\int_{-\infty}^\infty ((H-z)R(z)f)(x) \, (R(z)g)(x) \, dx \\ 
& = & \int_{-\infty}^\infty (R(z)f)(x) \, ((H-z)R(z)g)(x) \, dx \\
& = & \int_{-\infty}^\infty (R(z)f)(x) \, g(x) \, dx.
\end{eqnarray*}
This implies that $G(x,y;z) = G(y,x;z)$ for almost every 
$(x,y) \in \R^2$. 
Also, $R(z)^* = R(\overline{z})$, 
so $G(x,y;z) = \overline{G(y,x;\overline{z})}$ 
for almost every $(x,y) \in \R^2$. This proves ${\emph (ii)}$.

It remains to prove that $G(x,\cdot \, ;z) \in L^1$ for almost every $x$. 
Using the integral equation ${\emph (iii)}$ and the symmetry of $G(x,y;z)$
we have, for almost every $x$ 
\begin{equation}\label{G_est} 
\begin{aligned}
&\int_{-\infty}^\infty \vert G(x,y;z) \vert \, dy = \int_{-\infty}^\infty \vert G(y,x;z) \vert \, dy  \\ 
 \leq& \int_{-\infty}^\infty \vert G_0(y,x;z) \vert \, dy + 
\int_{-\infty}^\infty \int_{-\infty}^\infty \vert G_0(y,w;z) V(w) 
G(w,x;z) \vert \, dw dy.  
\end{aligned}\end{equation} 
Again let $c,\epsilon > 0$ be chosen such that $\vert G_0(y,x;z) \vert
\leq c e^{-\epsilon \vert x-y \vert}$. Define functions $a(w) = e^{-\epsilon
\vert w \vert}$ and $b(w) = \vert V(w) G(w,x;z) \vert$. Note that 
$a \in L^1$, and $b \in L^1$ for almost every $x$. The first integral in 
(\ref{G_est}) is bounded by $c \Vert a \Vert_{L^1}$ for all $x$. The second
integral in (\ref{G_est}) is bounded by $c \Vert a \Vert_{L^1} 
\Vert b \Vert_{L^1}$, which is finite for almost every $x$. \end{proof}

\begin{remark}\label{rem:L1V:HS}
 As soon as  $V\in L^1$,  from the proof of 
Lemma \ref{L:R(z)} we note that for $z\in \C\setminus \sigma(H)$,
$$ R(z)-R_0(z) $$ 
is Hilbert-Schmidt,  hence compact, which  provides the essential spectral information for $H$ as well. 
\end{remark}

\begin{lemma}\label{lem:phi-G} Let $V\in L^1\cap L^2$.
If $z \in \C \setminus \sigma(H)$, 
then the function 
$$
g(x) = \vert V\vert^\frac{1}{2}(x) \int_{-\infty}^\infty \vert G(x,y;z) \vert \, dy
$$
belongs to $L^2(\R)$. 
\end{lemma}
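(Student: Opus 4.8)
The plan is to prove the stronger assertion that the function
\[
\Phi(x):=\int_{-\infty}^\infty |G(x,y;z)|\,dy
\]
is essentially bounded on $\R$. Granting this, since $V\in L^1$ one obtains at once
\[
\norm{g}_{L^2}^2=\int_{-\infty}^\infty |V(x)|\,\Phi(x)^2\,dx\le \norm{\Phi}_{L^\infty}^2\,\norm{V}_{L^1}<\iy,
\]
which is the assertion of the lemma. To bound $\Phi$, I would split $G=G_0+A$, where by Lemma~\ref{L:R(z)} the difference $A(x,y;z)=G(x,y;z)-G_0(x,y;z)$ is the integral kernel of $R(z)-R_0(z)$ and is symmetric in $(x,y)$. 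Since $|G_0(x,y;z)|\le c\,e^{-\eps|x-y|}$ for suitable $c>0$ and $\eps=\Im z^{1/2}>0$ (as $z\notin[0,\iy)$), we have $\int_{-\infty}^\infty|G_0(x,y;z)|\,dy\le 2c/\eps$ uniformly in $x$, so everything reduces to bounding $\int_{-\infty}^\infty|A(x,y;z)|\,dy$ uniformly in (a.e.)\ $x$.

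For the $A$-part I would use the factorization already obtained in the proofs of Lemmas~\ref{L:inverse} and~\ref{L:R(z)}, namely $R(z)-R_0(z)=-R_0(z)\,V^{1/2}\,S\,|V|^{1/2}\,R_0(z)$ with $S:=(I+|V|^{1/2}R_0(z)V^{1/2})^{-1}\in\cB(L^2)$, together with the symmetry of $A$, which lets me replace $\int|A(x,y;z)|\,dy$ by the column integral $\int|A(y,x;z)|\,dy$. The point is that the slice $y\mapsto -A(\cdot,y;z)$ is produced by running $G_0(\cdot,y;z)$ through the operator chain: multiplication by $|V|^{1/2}$ sends $L^\infty$ into $L^2$ (using $|V|^{1/2}\in L^2$, i.e.\ $V\in L^1$, and $\sup_y\norm{G_0(\cdot,y;z)}_{L^\infty}<\iy$); $S$ is bounded on $L^2$; multiplication by $V^{1/2}\in L^2$ sends $L^2$ into $L^1$ by Cauchy--Schwarz; and $R_0(z)$, being convolution against $g_0\in L^1$, is bounded $L^1\to L^1$ by Young's inequality. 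All the operator bounds in this chain are independent of $y$, so $\int_{-\infty}^\infty|A(x,y;z)|\,dx\le C_1$ for a.e.\ $y$, with $C_1=\norm{g_0}_{L^1}\norm{g_0}_{L^\infty}\norm{S}\,\norm{V}_{L^1}$; by symmetry $\int_{-\infty}^\infty|A(x,y;z)|\,dy\le C_1$ for a.e.\ $x$, whence $\norm{\Phi}_{L^\infty}\le 2c/\eps+C_1$.

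The main technical point requiring care is the identification, valid for a.e.\ $y$, of the Hilbert--Schmidt kernel slice $A(\cdot,y;z)$ with the explicit function $-R_0(z)V^{1/2}S\bigl(|V|^{1/2}G_0(\cdot,y;z)\bigr)$ used above; a priori $A$ is only an element of $L^2(\R^2)$. I would handle this exactly as in the proof of Lemma~\ref{L:R(z)}: test both representations against product functions $f(x)h(y)$ with $f,h\in L^2$, use the exponential decay of $g_0$ to justify interchanging the order of integration (as in the estimate $\int\!\!\int|f(x)G_0(x,y;z)h(y)|\,dy\,dx\le c\norm{f}_{L^2}\norm{h}_{L^2}$ there), and conclude equality for a.e.\ $(x,y)$; the $L^1(\R)$-valued measurability of $y\mapsto A(\cdot,y;z)$ is then automatic from the composition. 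The remaining estimates are routine.
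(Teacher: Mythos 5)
Your proof is correct, but it follows a genuinely different route from the paper's. The paper attacks $\norm{g}_{L^2}^2$ directly as a triple integral $\int\!\!\int\!\!\int |V(x)|\,|G(y_1,x;z)|\,|G(y_2,x;z)|\,dy_1dy_2dx$, iterates the integral equation $G=G_0-G_0VG$ (and then $G=G_0+A$) to produce six types of terms, and bounds each one using the exponential decay of $G_0$, the Hilbert--Schmidt bound $\int\!\!\int|A|^2<\iy$, and Cauchy--Schwarz in the form $\int\!\!\int |V(x)||V(t)||A(t,x;z)|\,dtdx\le \norm{V}_{L^2}^2\norm{A}_{HS}$ --- so the hypothesis $V\in L^2$ is used in an essential way there. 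You instead prove the strictly stronger statement $\esssup_x\int|G(x,y;z)|\,dy<\iy$ by running the factorized second resolvent identity $R(z)-R_0(z)=-R_0(z)V^{1/2}S|V|^{1/2}R_0(z)$ through the mapping chain $L^\iy\to L^2\to L^2\to L^1\to L^1$, and for this only $V\in L^1$ is needed; the lemma then drops out from $\norm{g}_{L^2}^2\le\norm{\Phi}_{L^\iy}^2\norm{V}_{L^1}$. Your version buys a uniform Schur-type bound on the Green's function under a weaker hypothesis (and would, e.g., let one read off the uniform-in-$\xi$ bounds needed later in Lemma \ref{lem:F-Green} more directly), at the price of the kernel-slice identification $-A(\cdot,y;z)=R_0(z)V^{1/2}S\bigl(|V|^{1/2}G_0(\cdot,y;z)\bigr)$ for a.e.\ $y$, which you correctly flag and which is handled by exactly the same testing-against-products argument the paper already uses in Lemma \ref{L:R(z)}. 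The paper's computation is more pedestrian but stays entirely at the level of absolutely convergent iterated integrals without invoking slices of operator compositions.
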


\begin{proof}  

Let $\epsilon = {\cal I}m\ z^{1/2} >0 $. Using the integral equation for $G$
and the symmetry of $G$, we bound 
$$
\Vert g \Vert_{L^2}^2 = \int_{-\infty}^\infty \int_{-\infty}^\infty
\int_{-\infty}^\infty \vert V(x) \vert \, \vert G(y_1,x;z) \vert
\,\vert G(y_2,x;z) \vert \, dy_1 dy_2 dx
$$
by a finite combination of terms of the form
$$
\text{I}= \int_{-\infty}^\infty \int_{-\infty}^\infty \int_{-\infty}^\infty
\vert V(x) \vert e^{-\epsilon \vert y_1 - x \vert} 
e^{-\epsilon \vert y_2 - x \vert} \, dy_1 dy_2 dx,
$$
$$
\textrm{II}= \int_{-\infty}^\infty \int_{-\infty}^\infty \int_{-\infty}^\infty
\int_{-\infty}^\infty \vert V(x)  
\vert e^{-\epsilon \vert y_1 - t \vert}
\vert V(t) \vert \, \vert G(t,x;z) \vert 
e^{-\epsilon \vert y_2 - x \vert} \, dt dy_1 dy_2 dx,
$$
and 
\begin{eqnarray*}
\lefteqn{ 
\mathrm{III}= \int_{-\infty}^\infty \int_{-\infty}^\infty \int_{-\infty}^\infty
\int_{-\infty}^\infty \int_{-\infty}^\infty \vert V(x) \vert 
e^{-\epsilon \vert y_1 - t_1 \vert} \vert V(t_1) \vert \, \vert
G(t_1,x;z) \vert  } \\ 
& & \ \ \ \ \ \ \ \ \ \ \ \ \ \ \ \ \ \ \ \ \ \ \cdot
e^{-\epsilon \vert y_2 - t_2 \vert} \vert V(t_2) \vert
\, \vert G(t_2,x;z) \vert \, dt_1 dt_2 dy_1 dy_2 dx.
\end{eqnarray*} 
Clearly term I is finite. 
Term II is bounded by
\begin{eqnarray*}
\lefteqn{
c \, \int_{-\infty}^\infty \int_{-\infty}^\infty \vert V(x) \vert
\, \vert V(t) \vert \, \vert G(t,x;z) \vert \, dt dx } \\
& & \ \ \ \ \ \ \ \ \ \ \leq  
c \, \int_{-\infty}^\infty \int_{-\infty}^\infty \vert V(x) \vert
\, \vert V(t) \vert \, e^{-\epsilon \vert t-x \vert} \, dt dx \\ 
& & \ \ \ \ \ \ \ \ \ \ \ \ \ 
+ c \, \int_{-\infty}^\infty \int_{-\infty}^\infty 
\vert V(x) \vert \, \vert V(t) \vert \, \vert A(t,x;z) \vert
\, dt dx.
\end{eqnarray*}
The first integral on the right side is finite, 
and the second is bounded by 
$$
\Vert V \Vert_{L^2} \Vert V \Vert_{L^2} 
\left( \int_{-\infty}^\infty
\int_{-\infty}^\infty \vert A(t,x;z) \vert^2 \, dtdx \right)^{1/2}
< \infty.
$$
Term III is bounded by a combination of terms of the form 
\begin{eqnarray*}
\lefteqn{
\text{IV}= \int_{-\infty}^\infty \cdots \int_{-\infty}^\infty \vert V(x) \vert
e^{-\epsilon \vert y_1 - t_1 \vert} \vert V(t_1) \vert 
e^{-\epsilon \vert t_1 - x \vert} } \\
& & \ \ \ \ \ \ \ \ \ \ \ \ \ \ \ \cdot 
e^{-\epsilon \vert y_2 - t_2 \vert} \vert V(t_2) \vert
e^{-\epsilon \vert t_2 - x \vert}  \, dt_1 dt_2 dy_1 dy_2 dx,
\end{eqnarray*} 
\begin{eqnarray*}
\lefteqn{
\textrm{V}= \int_{-\infty}^\infty \cdots \int_{-\infty}^\infty \vert V(x) \vert
e^{-\epsilon \vert y_1 - t_1 \vert} \vert V(t_1) \vert
\, \vert A(t_1,x;z) \vert } \\
& & \ \ \ \ \ \ \ \ \ \ \ \ \ \ \ \cdot
e^{-\epsilon \vert y_2 - t_2 \vert} \vert V(t_2) \vert
e^{-\epsilon \vert t_2 - x \vert}  \, dt_1 dt_2 dy_1 dy_2 dx,
\end{eqnarray*}
and
\begin{eqnarray*}
\lefteqn{
\textrm{VI}= \int_{-\infty}^\infty \cdots \int_{-\infty}^\infty \vert V(x) \vert
e^{-\epsilon \vert y_1 - t_1 \vert} \vert V(t_1) \vert
\, \vert A(t_1,x;z) \vert } \\
& & \ \ \ \ \ \ \ \ \ \ \ \ \ \ \ \cdot
e^{-\epsilon \vert y_2 - t_2 \vert} \vert V(t_2) \vert \, 
\vert A(t_2,x;z) \vert  \, dt_1 dt_2 dy_1 dy_2 dx.
\end{eqnarray*}
 We can immediately integrate
term IV, with a finite result. Term V is bounded by 
$$
c \, \int_{-\infty}^\infty \int_{-\infty}^\infty 
\vert V(x) \vert \, \vert V(t_1) \vert \, 
\vert A(t_1,x;z) \vert \, dt_1 dx < \infty.
$$
Finally, term VI is bounded by 
\begin{eqnarray*}
\lefteqn{
c \, \int_{-\infty}^\infty \int_{-\infty}^\infty \int_{-\infty}^\infty
\vert V(x) V(t_1) V(t_2) \vert \, 
\vert A(t_1,x;z) \vert \, \vert A(t_2,x;z) \vert
\, dt_1 dt_2 dx } \\ 
& & \ \ \ \ \ \ \ \ \ \leq 
c \left( \int_{-\infty}^\infty \int_{-\infty}^\infty \int_{-\infty}^\infty
\vert V(x) V(t_1) V(t_2) \vert \,
\vert A(t_1,x;z) \vert^2  \, dt_1 dt_2 dx \right)^{1/2} \\
& & \ \ \ \ \ \ \ \ \ \ \ \ \ \cdot
\left( \int_{-\infty}^\infty \int_{-\infty}^\infty \int_{-\infty}^\infty
\vert V(x) V(t_1) V(t_2) \vert \,
\vert A(t_2,x;z) \vert^2  \, dt_1 dt_2 dx \right)^{1/2},
\end{eqnarray*}
which is finite. 
\end{proof}

\begin{lemma}\label{lem:F-Green} {Let $V\in L^1\cap L^2$}.
Let $\xi \in \R$, $\kappa \in \C$, ${\Im}\ \kappa > 0$.  
 $\kappa^2\notin \sigma_{pp}(H)$.
Define 
\begin{equation}\label{h(xi:ka):G}
h(x,\xi;\kappa) = (\xi^2 - \kappa^2) \int_{-\infty}^\infty G(x,y;\kappa^2) e^{iy\xi} \, dy
\end{equation}
and $$
p(x,\xi;\kappa) = \vert V(x) \vert^{1/2} h(x,\xi;\kappa).
$$
Then the function $h(x,\xi;\kappa)$ satisfies 
$$
h(x,\xi;\kappa) = e^{ix\xi} + (2i\kappa)^{-1} \int_{-\infty}^\infty
e^{i\kappa \vert x-y \vert} V^{1/2}(y)
p(y,\xi;\kappa) \, dy
$$
for a.e. $x$. The function $p(\cdot \, ,\xi;\kappa)$ belongs to 
$L^2$ and satisfies 
$$
p(x,\xi;\kappa) = \vert V(x) \vert^{1/2} e^{ix\xi} + (2i\kappa)^{-1} 
\int_{-\infty}^\infty \vert V(x) \vert^{1/2} e^{i\kappa \vert x-y \vert}
V^{1/2}(y) p(y,\xi;\kappa) \, dy.
$$
for a.e. $x$. Furthermore, $p(\cdot \, ,\xi;\kappa)$ 
is the unique $L^2$ solution to this equation, 
with $\Vert p(\cdot \, ,\xi;\kappa)\Vert_2 \le c_\kappa 
$.
\end{lemma}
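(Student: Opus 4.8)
The plan is to derive both integral equations directly from the resolvent kernel identity (iii) of Lemma \ref{L:R(z)}, and then to extract the $L^2$ membership, uniqueness and bound from Lemma \ref{L:inverse}. First I would record that the hypothesis $\kappa^2\notin\sigma_{pp}(H)$ together with $\Im\kappa>0$ forces $\kappa^2\in\C\setminus\sigma(H)$: the number $\kappa^2$ is real only when $\kappa\in i(0,\infty)$, in which case $\kappa^2\in(-\infty,0)$, and since $\sigma_{ac}(H)=[0,\infty)$ and $\sigma_{sc}(H)=\emptyset$ the only way $\kappa^2$ could belong to $\sigma(H)$ is as a point of $\sigma_{pp}(H)$, which is excluded. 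Hence Lemmas \ref{L:R(z)}, \ref{lem:phi-G} and \ref{L:inverse} all apply at $z=\kappa^2$, and $G_0(x,y;\kappa^2)=-(2i\kappa)^{-1}e^{i\kappa|x-y|}$ because $(\kappa^2)^{1/2}=\kappa$.

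Next I would substitute identity (iii), namely $G(x,y;\kappa^2)=G_0(x,y;\kappa^2)-\int G_0(x,w;\kappa^2)V(w)G(w,y;\kappa^2)\,dw$, into the definition \eqref{h(xi:ka):G} of $h$, after multiplying by $(\xi^2-\kappa^2)e^{iy\xi}$ and integrating in $y$; this is legitimate for a.e.\ $x$ since $G(x,\cdot\,;\kappa^2)\in L^1$ by Lemma \ref{L:R(z)}(i). The ``free'' term $(\xi^2-\kappa^2)\int G_0(x,y;\kappa^2)e^{iy\xi}\,dy$ I would evaluate by splitting the integral at $y=x$ and using $\Im\kappa>0$ to guarantee convergence at both ends; the elementary computation gives $\int e^{i\kappa|x-y|}e^{iy\xi}\,dy=\dfrac{2\kappa}{i(\xi^2-\kappa^2)}\,e^{ix\xi}$, so this term is exactly $e^{ix\xi}$. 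For the ``interaction'' term I would use Fubini to pull the $y$-integral inside, which reproduces $h(w,\xi;\kappa)$; the absolute convergence needed for this interchange, $\int|V(w)|\bigl(\int|G(w,y;\kappa^2)|\,dy\bigr)\,dw<\infty$, follows by writing $|V(w)|\int|G(w,y;\kappa^2)|\,dy=|V(w)|^{1/2}\cdot g(w)$ with $|V|^{1/2}\in L^2$ (as $V\in L^1$) and $g\in L^2$ by Lemma \ref{lem:phi-G}, and then applying Cauchy--Schwarz. Using $V^{1/2}(w)|V(w)|^{1/2}=V(w)$, hence $V(w)h(w,\xi;\kappa)=V^{1/2}(w)p(w,\xi;\kappa)$, together with $G_0(x,w;\kappa^2)=-(2i\kappa)^{-1}e^{i\kappa|x-w|}$, the interaction term becomes $(2i\kappa)^{-1}\int e^{i\kappa|x-y|}V^{1/2}(y)p(y,\xi;\kappa)\,dy$. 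This produces the displayed equation for $h$; multiplying it through by $|V(x)|^{1/2}$ gives the stated equation for $p$.

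It then remains to treat the $L^2$ claims. I would bound $|p(x,\xi;\kappa)|\le|\xi^2-\kappa^2|\,|V(x)|^{1/2}\int|G(x,y;\kappa^2)|\,dy=|\xi^2-\kappa^2|\,g(x)$, so $p(\cdot\,,\xi;\kappa)\in L^2$ by Lemma \ref{lem:phi-G}. Writing the $p$-equation as $(I+T(\kappa))\,p(\cdot\,,\xi;\kappa)=b$, where $T(\kappa)=|V|^{1/2}R_0(\kappa^2)V^{1/2}$ is the operator of Lemma \ref{l:e(xxi):E0} (its kernel is $-(2i\kappa)^{-1}|V(x)|^{1/2}e^{i\kappa|x-y|}V^{1/2}(y)$) and $b(x)=|V(x)|^{1/2}e^{ix\xi}$ with $\|b\|_2^2=\|V\|_{L^1}$, I would invoke the bounded invertibility of $I+T(\kappa)$ from Lemma \ref{L:inverse}. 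This gives at once that $p(\cdot\,,\xi;\kappa)=(I+T(\kappa))^{-1}b$ is the unique $L^2$ solution of the equation and that $\|p(\cdot\,,\xi;\kappa)\|_2\le\|(I+T(\kappa))^{-1}\|_{\mathcal B(L^2)}\,\|V\|_{L^1}^{1/2}=:c_\kappa$, which depends only on $\kappa$ (and $V$), not on $\xi$, as claimed.

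The main obstacle I anticipate is purely technical bookkeeping: justifying the Fubini interchange in the interaction term uniformly enough to hold for a.e.\ $x$, and keeping track of the full-measure set of $x$ for which $G(x,\cdot\,;\kappa^2)\in L^1\cap L^2$ so that every manipulation above is valid there. No genuinely new estimate is needed, however, since all the required integrability is already packaged in Lemma \ref{L:R(z)}(i) and Lemma \ref{lem:phi-G}.
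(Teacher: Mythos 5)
Your proposal is correct and follows essentially the same route as the paper: substitute Lemma \ref{L:R(z)}(iii) into the definition of $h$, use Lemma \ref{lem:phi-G} to justify the Fubini interchange and the a priori $L^2$ bound on $p$, and then invoke the invertibility of $I+|V|^{1/2}R_0(\kappa^2)V^{1/2}$ from Lemma \ref{L:inverse} for uniqueness and the $\xi$-independent bound. The only differences are that you spell out details the paper leaves implicit (the explicit evaluation of the free term, the observation that $\kappa^2\notin\sigma(H)$, and the Cauchy--Schwarz justification of the interchange), all of which are accurate.
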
 

\begin{proof} Note a priorie that  $p(\cdot \, ,\xi;\kappa)$ 
is in $L^2$ by Lemma \ref{lem:phi-G}.  
According to Lemma \ref{L:R(z)} (\emph{iii}), for almost every $x$
$$
h(x,\xi;\kappa) = e^{ix\xi} + 
(\xi^2 - \kappa^2) (2i\kappa)^{-1} \int_{-\infty}^\infty
\int_{-\infty}^\infty e^{i\kappa \vert x-w \vert} V(w) G(w,y;\kappa^2) 
e^{iy\xi} \, dw dy.
$$
An application of Lemma \ref{lem:phi-G} shows that the order of the 
two integrations can be exchanged, resulting in
$$  
h(x,\xi;\kappa) = e^{ix\xi} + (2i\kappa)^{-1} \int_{-\infty}^\infty
e^{i\kappa \vert x-w \vert} V(w) h(w,\xi;\kappa) \, dw 
$$  
for a.e. $x$. 
The integral equations for $h$ and $p$ stated in the 
lemma follow immediately from this equation. 

Finally, {since}  $p(\cdot \, ,\xi;\kappa)$ 
is in $L^2$ by Lemma \ref{lem:phi-G}, the integral
equation for $p$ has the form
$$
(I + \vert V \vert^{1/2} R_0(\kappa^2) V^{1/2})a = b, 
$$
where $b \in L^2$ denotes $\vert V(x) \vert^{1/2} e^{ix\xi}$ 
and $a$ represents the 
solution $p(\cdot \, ,\xi;\kappa)$. According to Lemma \ref{L:inverse} this equation has a unique solution.  
\end{proof}


\begin{lemma}\label{lem:Phi}  {Let $V\in L^1\cap L^2$}.
Let $f \in C^\infty_0$. Let $\xi \in \R$, $\kappa \in \C$, 
${\Im}\ \kappa > 0$ with  $\kappa^2 \notin \sigma_{pp}(H)$.
Define 
$$
\Phi(\xi;\kappa) = (2\pi)^{-1/2} \int_{-\infty}^\infty f(x)
\overline{h(x,\xi;\kappa)} \, dx 
$$
and for a.e. $\xi$  
\begin{equation}\label{e:fsh}
{f}^\sharp (\xi) = (2\pi)^{-1/2} \int_{-\infty}^\infty f(x) \overline{e(x,\xi)} \, dx.
\end{equation}
Suppose $0 < \alpha < \beta$ and $[\sqrt{\alpha},\sqrt{\beta}] \cap {\cal E}_0 = \emptyset$.
Then $\Phi(\xi;\kappa)$
has an extension to real $\kappa$ for $\sqrt{\alpha} \le \kappa \leq \sqrt{\beta}$. If $\alpha \leq  \xi^2 \leq \beta$,
then $f^\sharp (\xi) = \Phi(\xi;\vert \xi \vert)$. Moreover, 
the extended version of $\Phi$ is bounded and 
uniformly continuous in $\xi \in \R$, 
$\sqrt{\alpha} \leq {\Re}\ \kappa \leq \sqrt{\beta}$, $ 
{\cal I}m\ \kappa \geq 0$. 
\end{lemma}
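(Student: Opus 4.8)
The plan is to construct the extension of $\Phi$ by hand from the map $\kappa\mapsto(I+T(\kappa))^{-1}$ and then show it is boundedly and uniformly continuous. Write $\mathcal{R}:=\{\kappa\in\C:\sqrt{\alpha}\le\Re\kappa\le\sqrt{\beta},\ \Im\kappa\ge0\}$, and, as in the proof of Lemma~\ref{l:e(xxi):E0}, $T(\kappa):=|V|^{1/2}R_0(\kappa^2)V^{1/2}$ and $b_\xi(x):=|V(x)|^{1/2}e^{ix\xi}$. For $\Im\kappa>0$, Lemma~\ref{lem:F-Green} tells us $p(\cdot,\xi;\kappa)$ is the unique $L^2$ solution of $(I+T(\kappa))p=b_\xi$. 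I would first note that $(I+T(\kappa))^{-1}$ exists and is bounded on all of $\mathcal{R}$: for $\Im\kappa>0$ we have $\Re\kappa\ge\sqrt{\alpha}>0$, so $\kappa^2$ has strictly positive imaginary part, $\kappa^2\notin\sigma(H)$, and the inverse exists by Lemma~\ref{L:inverse}; for $\kappa\in[\sqrt{\alpha},\sqrt{\beta}]$ the inverse exists by the variant of the analytic Fredholm theorem (Lemma~\ref{lem:Fredholm}) used to construct $\mathcal{E}_0$, since $[\sqrt{\alpha},\sqrt{\beta}]\cap\mathcal{E}_0=\emptyset$ and $\sqrt{\alpha}>0$. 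One then \emph{defines} $p(\cdot,\xi;\kappa):=(I+T(\kappa))^{-1}b_\xi$ on $\mathcal{R}$ (agreeing with the original for $\Im\kappa>0$ by uniqueness in Lemma~\ref{lem:F-Green}), then $h(\cdot,\xi;\kappa)$ by the integral formula of Lemma~\ref{lem:F-Green}, and $\Phi(\xi;\kappa):=(2\pi)^{-1/2}\int_\R f(x)\overline{h(x,\xi;\kappa)}\,dx$; this is the asserted extension.

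The second step rewrites $\Phi$ so that its $\kappa$-dependence is visibly continuous. Using $(2i\kappa)^{-1}e^{i\kappa|x-y|}=-G_0(x,y;\kappa^2)$, the formula for $h$ reads $h(x,\xi;\kappa)=e^{ix\xi}-\bigl(R_0(\kappa^2)\bigl(V^{1/2}p(\cdot,\xi;\kappa)\bigr)\bigr)(x)$, with $V^{1/2}p(\cdot,\xi;\kappa)\in L^1$ since $p(\cdot,\xi;\kappa)\in L^2$ and $V^{1/2}\in L^2$. Because $f\in C_0^\infty$ and $|G_0(\cdot,\cdot;\kappa^2)|\le(2\sqrt{\alpha})^{-1}$ throughout $\mathcal{R}$, Fubini's theorem applies, and with the symmetry $G_0(x,y;\kappa^2)=G_0(y,x;\kappa^2)$ one obtains
$$\Phi(\xi;\kappa)=\hat{f}(\xi)-(2\pi)^{-1/2}\bigl\la A(\kappa),\,p(\cdot,\xi;\kappa)\bigr\ra_{L^2},\qquad A(\kappa)(x):=V^{1/2}(x)\int_\R\overline{G_0(x,y;\kappa^2)}\,f(y)\,dy ,$$
where $\la a,b\ra=\int a\bar b$ and $A(\kappa)\in L^2$ because $f\in C_0^\infty$ forces the inner integral into $L^\infty$. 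The equality $f^\sharp(\xi)=\Phi(\xi;|\xi|)$ for $\alpha\le\xi^2\le\beta$ then follows directly: for such $\xi$, $|\xi|\in[\sqrt{\alpha},\sqrt{\beta}]$, and the equation $(I+T(|\xi|))p=b_\xi$ defining $p(\cdot,\xi;|\xi|)$ is precisely~(\ref{LS_mod_eq}), whose unique $L^2$ solution is the function $\psi(\cdot,\xi)$ of Lemma~\ref{l:e(xxi):E0}; hence $h(\cdot,\xi;|\xi|)=e(\cdot,\xi)$ a.e.\ and $\Phi(\xi;|\xi|)=f^\sharp(\xi)$ by~(\ref{e:fsh}).

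What remains is to prove that $A:\mathcal{R}\to L^2$ and $(\xi,\kappa)\mapsto p(\cdot,\xi;\kappa):\R\times\mathcal{R}\to L^2$ are bounded and uniformly continuous; the boundedness and uniform continuity of $\Phi$ on $\R\times\mathcal{R}$ then follow from the displayed formula, the uniform continuity of $\hat{f}$ ($f\in L^1$), and the estimate $|\Phi(\xi_1;\kappa_1)-\Phi(\xi_2;\kappa_2)|\le|\hat{f}(\xi_1)-\hat{f}(\xi_2)|+(2\pi)^{-1/2}\bigl(\norm{A(\kappa_1)-A(\kappa_2)}_2\norm{p(\cdot,\xi_1;\kappa_1)}_2+\norm{A(\kappa_2)}_2\norm{p(\cdot,\xi_1;\kappa_1)-p(\cdot,\xi_2;\kappa_2)}_2\bigr)$. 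For $A$: from $|G_0(x,y;\kappa^2)|\le(2\sqrt{\alpha})^{-1}e^{-(\Im\kappa)|x-y|}$ and the compact support of $f$ one gets $\norm{A(\kappa)}_2\le C\norm{V}_1^{1/2}(1+\Im\kappa)^{-1}$, bounded on $\mathcal{R}$ and $\to0$ as $\Im\kappa\to\infty$, while $\kappa\mapsto A(\kappa)$ is continuous on each bounded subset of $\mathcal{R}$ by dominated convergence; these two facts together give uniform continuity on $\mathcal{R}$. For $p$: $\xi\mapsto b_\xi$ is bounded ($\norm{b_\xi}_2=\norm{V}_1^{1/2}$) and uniformly continuous, since $\norm{b_{\xi_1}-b_{\xi_2}}_2^2\le\int_\R|V(x)|\min\bigl(4,\,x^2|\xi_1-\xi_2|^2\bigr)\,dx\to0$ uniformly as $|\xi_1-\xi_2|\to0$; $\kappa\mapsto T(\kappa)$ is bounded and uniformly continuous from $\mathcal{R}$ into the Hilbert--Schmidt operators, with Hilbert--Schmidt norm $\to0$ as $\Im\kappa\to\infty$; and $C_0:=\sup_{\kappa\in\mathcal{R}}\norm{(I+T(\kappa))^{-1}}<\infty$, because on $\{\kappa\in\mathcal{R}:\Im\kappa\le N\}$ this follows from continuity of the invertible family $I+T(\kappa)$ together with compactness, and for $\Im\kappa\ge N$ with $N$ large $\norm{T(\kappa)}<\tfrac12$. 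Finally $(I+T(\kappa_1))^{-1}-(I+T(\kappa_2))^{-1}=(I+T(\kappa_1))^{-1}(T(\kappa_2)-T(\kappa_1))(I+T(\kappa_2))^{-1}$ yields $\norm{p(\cdot,\xi_1;\kappa_1)-p(\cdot,\xi_2;\kappa_2)}_2\le C_0^2\norm{V}_1^{1/2}\norm{T(\kappa_1)-T(\kappa_2)}+C_0\norm{b_{\xi_1}-b_{\xi_2}}_2$.

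The genuine obstacle is not any individual estimate but the fact that $\mathcal{R}$ is \emph{unbounded} in the $\Im\kappa$ direction, so that continuity on compact subsets does not by itself give uniform continuity. This is circumvented by the two decay statements $\norm{A(\kappa)}_2\to0$ and $\norm{T(\kappa)}\to0$ as $\Im\kappa\to\infty$ --- each a consequence of $f$ having compact support together with $V\in L^1$ --- which both make $C_0$ finite and allow one to patch the compact-part estimates with the contraction estimate on the tail. A secondary technical point is the Fubini interchange producing the inner-product form of $\Phi$; it relies on $f\in C_0^\infty$, the uniform bound $|G_0(\cdot,\cdot;\kappa^2)|\le(2\sqrt{\alpha})^{-1}$ over $\mathcal{R}$, and $V^{1/2}p(\cdot,\xi;\kappa)\in L^1$.
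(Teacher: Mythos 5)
Your proposal is correct and follows essentially the same route as the paper: extend $(I+T(\kappa))^{-1}$ across the real axis on the strip via Lemma \ref{L:inverse} and the Fredholm construction of ${\cal E}_0$, deduce bounded uniform continuity of $p(\cdot\,,\xi;\kappa)=(I+T(\kappa))^{-1}b_\xi$, and identify $f^\sharp(\xi)=\Phi(\xi;\vert\xi\vert)$ through the uniqueness of the $L^2$ solution of (\ref{LS_mod_eq}). The only divergences are improvements in detail: you package the last step as $\Phi(\xi;\kappa)=\hat f(\xi)-(2\pi)^{-1/2}\langle A(\kappa),p(\cdot\,,\xi;\kappa)\rangle$ instead of the paper's $C_b$-valued family $\vert f\vert^{1/2}\tilde h$, and you explicitly supply the decay of $\Vert T(\kappa)\Vert$ and $\Vert A(\kappa)\Vert_2$ as ${\Im}\,\kappa\to\infty$ needed to upgrade continuity on compacta to uniform continuity on the unbounded strip --- a point the paper asserts without comment.
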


\begin{proof} Lemma \ref{lem:F-Green} implies that the integral defining $\Phi(\xi;\kappa)$ converges
absolutely, and Lemma \ref{l:e(xxi):E0} implies that the integral defining $f^\sharp(\xi)$
converges absolutely. Consider the operator $(I + T(\kappa)) \in {\cal B}(L^2)$
where $T(\kappa)$ has integral kernel 
$$
T_\kappa(x,y) = -(2i\kappa)^{-1} \vert V(x) \vert^{1/2} e^{i\kappa\vert 
x-y \vert} V^{1/2}(y).
$$
If ${\cal I}m\ \kappa > 0$ and
 $\kappa^2 \notin \sigma_{pp}(H)$, then $T(\kappa) = \vert V \vert^{1/2} 
R_0(\kappa^2) V^{1/2}$, in which case Lemma \ref{L:inverse} implies that 
$(I + T(\kappa))^{-1} \in {\cal B}(L^2)$ exists. On the other hand, if 
$\kappa$ is real and positive, and $\kappa \notin {\cal E}_0$, 
then the proof of Lemma \ref{l:e(xxi):E0}
shows that $(I + T(\kappa))^{-1} \in {\cal B}(L^2)$ also exists. The family
of Hilbert-Schmidt operators $T(\kappa)$ is ${\cal B}(L^2)$ norm continuous
as a function of $\kappa \in \bar{U} \setminus \{ 0 \}$. 
$U=\{z\in \C: {\Im} z>0\}$.
Therefore, the family of operators
$(I + T(\kappa))^{-1}$ is uniformly continuous 
(in ${\cal B}(L^2)$ norm) on the strip 
$K = \{ \kappa \in \C : \sqrt{\alpha} \leq {\Re}\ \kappa \leq \sqrt{\beta},  {\Im}\ \kappa  \geq 0 \}$. 

Define $F$: $\R \rightarrow L^2(\R)$ by the rule  
$F(\xi) = \vert V(x) \vert^{1/2} e^{ix\xi}$. For a.e. $x$, $F$ is a bounded, uniformly
continuous function of $\xi$. It follows that the $L^2(\R)$-valued function 
$(I + T(\kappa))^{-1} F(\xi)$ is 
bounded and uniformly continuous as a function of 
$(\xi,\kappa) \in \R \times K$.  
If ${\cal I}m\ \kappa > 0$, then $(I + T(\kappa))^{-1} F(\xi)$
is just $p(\cdot \, ,\xi;\kappa)$. So, we know that the family of 
$L^2$ functions $p(\cdot \, ,\xi;\kappa)$ has a 
bounded, uniformly continuous (in $L^2$ norm) extension
to the parameter set $(\xi,\kappa) \in \R \times K$.  

Now for ${\Im}\ \kappa > 0$ and $\kappa^2\notin \sigma_{pp}$ define 
$$
\tilde{h}(x,\xi;\kappa) = e^{ix\xi} + (2i\kappa)^{-1}
\int_{-\infty}^\infty e^{i\kappa \vert x-y \vert} V^{1/2}(y) 
p(y,\xi;\kappa) \, dy.
$$
Recall that according to Lemma \ref{lem:F-Green}, $\tilde{h}(x,\xi;\kappa)
= h(x,\xi;\kappa)$ for almost every $x$. 
Let $C_b:=C^0_b(\R)$ denote the Banach space of bounded continuous functions on 
$\R$ with sup norm. It is not difficult to check that the family of 
functions $\vert f(\cdot) \vert^{1/2} \tilde{h}(\cdot \, ,\xi;\kappa)
\in C_b$ has a bounded, uniformly continuous (in $C_b$ norm)
extension to $(\xi,\kappa) \in \R \times K$. 
It follows that $\Phi(\xi;\kappa)$ itself has a bounded, uniformly
continuous extension to $(\xi,\kappa) \in \R \times K$.  

Finally, if $\alpha \leq  \xi^2 \leq \beta$, 
then inspection of the proof of 
Lemma \ref{l:e(xxi):E0} shows that the extended version of $p$ satisfies 
$p(x,\xi;\vert \xi \vert) = 
\psi(x,\xi)$, (this can also been seen by the uniqueness of solution to (\ref{LS_mod_eq})).  Comparing the definition of $\tilde{h}$ and the integral 
equation for $e(x,\xi)$ in Lemma \ref{l:e(xxi):E0}, we find that the extended version of 
$\Phi$ has the property ${f}^\sharp(\xi) = \Phi(\xi;\vert \xi \vert )$. 
\end{proof}

\noindent   
\begin{remark}\label{e(xxi):alpha-beta} 
 Suppose $0 < \alpha < \beta$ and $[\sqrt{\alpha},\sqrt{\beta}] \cap {\cal E}_0 = \emptyset$.  
The proof of  Lemma \ref{lem:Phi} shows that the family of $L^2$ functions $\psi(\cdot,\xi)$ is bounded
and uniformly continuous (in $L^2$ norm) for 
$\alpha \leq  \xi^2 \leq \beta$. 
It follows from the integral equation for $e(x,\xi)$ in 
Lemma  \ref{l:e(xxi):E0} that there
exists some $c_{\alpha,\beta} < \infty$ such that 
$\vert e(x,\xi) \vert \leq c_{\alpha,\beta}$ for all $x \in \R$,  
$\alpha \leq  \xi^2  \leq \beta$. 
With a little work it also follows from the integral equation
that $e(x,\xi)$ is jointly continuous in $x,\xi$ for $x \in \R$,  $\alpha \leq \xi^2 \leq \beta$. 
\end{remark}

\section{Proof of Theorem \ref{th:Plancherel}}\label{s:main:proof} 

The main procedure 
 to prove the theorem is to  show  Lemma \ref{lem:Pf},  
  a Plancherel type formula for $H_V$. 
The proof of the lemma are based on results in \S \ref{s:L2} to \S \ref{s:resolvent:Hv},
where we have used the resolvents to construct the generalized eigenfunctions $e(x,\xi)$ and the spectral measure for $H_V$ in Lemma \ref{l:e(xxi):E0} to
Lemma \ref{lem:Phi}.
Parts (a) and (b) of Theorem \ref{th:Plancherel}  establish the completeness of $e(\cdot,\xi)$ or equivalently, the inversion formula for $\cF$ on $\sH_{ac}$
\begin{align}\label{eF:Eac}    
    \cF^*\cF=P_{ac} \,. 
 \end{align}

 
For $A \subset \R$ a Borel set and $\chi_A$ the characteristic function of $A$, 
let $P_A$  denote the operator $\chi_A(H)$ defined by the functional calculus: $P_A=\int \chi_A(\lam) dE_\lam $.   

\begin{lemma}\label{lem:Pf}  
Let $f \in C^\infty_0$. Suppose $0 < \alpha < \beta$ are such that 
$[ \sqrt{\alpha},\sqrt{\beta} \, ] \cap {\cal E}_0 = \emptyset$. Then  
\begin{align}\label{L2:Pf:Ff}
\Vert P_{[\alpha,\beta)} f \Vert_{L^2}^2 = 
\int_{\alpha \leq \xi^2 \leq \beta} \vert {f}^\sharp (\xi) \vert^2 \, d\xi.   
\end{align}
\end{lemma}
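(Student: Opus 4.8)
The plan is to deduce \eqref{L2:Pf:Ff} from Stone's formula, by identifying the boundary values of the resolvent kernel $G(x,y;z)$ of Lemma \ref{L:R(z)} with the bilinear eigenfunction kernel built from $e(x,\xi)$. Since $P_{[\alpha,\beta)}$ is an orthogonal projection, $\Vert P_{[\alpha,\beta)}f\Vert_{L^2}^2=(P_{[\alpha,\beta)}f,f)$, so it suffices to evaluate the right side. Because $\sqrt{\alpha},\sqrt{\beta}\notin{\cal E}_0$, Lemma \ref{lem:+ve-eigenval} shows that $\alpha$ and $\beta$ are not eigenvalues of $H$; together with $\sigma_{sc}(H)=\emptyset$ and the fact that the positive spectrum is purely absolutely continuous (see \S\ref{s:resolvent:Hv}), the spectral measure of $H$ has no atom in $[\alpha,\beta]$, so $P_{[\alpha,\beta)}=P_{(\alpha,\beta)}=P_{[\alpha,\beta]}$ and Stone's formula gives, for $f\in C_0^\infty$,
\begin{equation*}
(P_{[\alpha,\beta)}f,f)=\lim_{\epsilon\downarrow 0}\frac{1}{2\pi i}\int_\alpha^\beta\big((R(\lambda+i\epsilon)-R(\lambda-i\epsilon))f,f\big)\,d\lambda .
\end{equation*}
By Lemma \ref{L:R(z)} the operators $R(\lambda\pm i\epsilon)$ act by the kernels $G(\cdot,\cdot;\lambda\pm i\epsilon)$, so the integrand equals $\int\!\!\int\overline{f(x)}\,\big[G(x,y;\lambda+i\epsilon)-G(x,y;\lambda-i\epsilon)\big]\,f(y)\,dy\,dx$.

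The central step is to prove that, for $\sqrt{\lambda}\in[\sqrt{\alpha},\sqrt{\beta}]$ and a.e.\ $(x,y)$,
\begin{equation*}
G(x,y;\lambda+i0)-G(x,y;\lambda-i0)=\frac{i}{2\sqrt{\lambda}}\Big(e(x,\sqrt{\lambda})\,\overline{e(y,\sqrt{\lambda})}+e(x,-\sqrt{\lambda})\,\overline{e(y,-\sqrt{\lambda})}\Big),
\end{equation*}
with the boundary values approached uniformly in $\lambda$. I write $G=G_0+A$, where by the proof of Lemma \ref{L:R(z)} $A(\cdot,\cdot;z)$ is the kernel of $-R_0(z)V^{1/2}(I+T(z))^{-1}|V|^{1/2}R_0(z)$, with $T(z)=|V|^{1/2}R_0(z)V^{1/2}$ a Hilbert--Schmidt operator. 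For $\sqrt{\lambda}\notin{\cal E}_0$ the boundary operators $(I+T(\lambda\pm i0))^{-1}$ exist and depend continuously on the parameters: the ``$+$'' case is exactly the invertibility used in the proofs of Lemmas \ref{l:e(xxi):E0} and \ref{lem:Phi}, and the ``$-$'' case follows because $T(\lambda-i0)$ is the entrywise complex conjugate of $T(\lambda+i0)$, whence $(I+T(\lambda-i0))^{-1}=\overline{(I+T(\lambda+i0))^{-1}}$. Using the elementary identity $G_0(x,y;\lambda+i0)-G_0(x,y;\lambda-i0)=\tfrac{i}{2\sqrt{\lambda}}\big(e^{i\sqrt{\lambda}(x-y)}+e^{-i\sqrt{\lambda}(x-y)}\big)$ and telescoping $A(\lambda+i0)-A(\lambda-i0)$ by the second resolvent identity for $I+T$ (so $(I+T^+)^{-1}-(I+T^-)^{-1}=-(I+T^+)^{-1}(T^+-T^-)(I+T^-)^{-1}$ with $T^+-T^-=|V|^{1/2}(R_0^+-R_0^-)V^{1/2}$), each resulting term is, up to the dressing factors $(I+T(\lambda\pm i0))^{-1}$, built from the rank--two kernel $\sum_{\pm}u_\pm\otimes\overline{u_\pm}$, where $u_\pm(x)=e^{\pm i\sqrt{\lambda}x}$. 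Comparing with $e(\cdot,\pm\sqrt{\lambda})=\big(I-R_0(\lambda+i0)V^{1/2}(I+T(\lambda+i0))^{-1}|V|^{1/2}\big)u_\pm$, which is the explicit form of the solution constructed in Lemma \ref{l:e(xxi):E0} (cf.\ Lemma \ref{lem:F-Green} and \eqref{LS_eq}) read at $z=\lambda+i0$, and tracking the conjugations coming from $R_0^-=\overline{R_0^+}$ and $(I+T^-)^{-1}=\overline{(I+T^+)^{-1}}$, the four-term sum assembles into the rank--two expression above.

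Granting this, I substitute into Stone's formula. By Remark \ref{e(xxi):alpha-beta} the functions $e(\cdot,\xi)$ are jointly continuous and uniformly bounded on $\R\times\{\alpha\le\xi^2\le\beta\}$, and by Lemma \ref{lem:Phi} the boundary values are approached uniformly; since $f\in C_0^\infty$, this yields a dominating function allowing $\epsilon\downarrow 0$ to pass through the $x$, $y$ and $\lambda$ integrations. Using $\tfrac{1}{2\pi i}\cdot\tfrac{i}{2\sqrt{\lambda}}=\tfrac{1}{4\pi\sqrt{\lambda}}$ and
\begin{equation*}
\int\!\!\int\overline{f(x)}\,e(x,\pm\sqrt{\lambda})\,\overline{e(y,\pm\sqrt{\lambda})}\,f(y)\,dy\,dx=\Big|\int f(y)\,\overline{e(y,\pm\sqrt{\lambda})}\,dy\Big|^{2}=2\pi\,|f^\sharp(\pm\sqrt{\lambda})|^{2},
\end{equation*}
one obtains $(P_{[\alpha,\beta)}f,f)=\int_\alpha^\beta\tfrac{1}{2\sqrt{\lambda}}\big(|f^\sharp(\sqrt{\lambda})|^{2}+|f^\sharp(-\sqrt{\lambda})|^{2}\big)\,d\lambda$. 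The substitutions $\xi=\sqrt{\lambda}$ in the first term and $\xi=-\sqrt{\lambda}$ in the second convert this into $\int_{\sqrt{\alpha}\le|\xi|\le\sqrt{\beta}}|f^\sharp(\xi)|^{2}\,d\xi=\int_{\alpha\le\xi^{2}\le\beta}|f^\sharp(\xi)|^{2}\,d\xi$, which is \eqref{L2:Pf:Ff}.

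The main obstacle is the central step, i.e.\ the factorized boundary-value formula for $G$: it relies on the limiting absorption principle (existence, boundedness and continuity of $(I+T(z))^{-1}$ up to the positive axis off ${\cal E}_0$, furnished by Lemmas \ref{L:inverse}, \ref{l:e(xxi):E0} and \ref{lem:Phi}), and then on a mechanical but delicate bookkeeping of the Born expansion of the resolvent difference together with careful tracking of complex conjugates --- the latter reflecting the self-adjointness (time-reversal symmetry) of $H$, and being precisely what forces the spectral density to appear as $|f^\sharp(\pm\sqrt{\lambda})|^{2}$ rather than a mixed bilinear term. An alternative route would represent $(R(z)f,f)=\int\hat f(\xi)\,\overline{\Phi_f(\xi;z^{1/2})}\,(\xi^{2}-z)^{-1}\,d\xi$ by Fourier inversion of $f$ and Lemma \ref{lem:F-Green}, extracting the boundary value via the Poisson-type limit $\epsilon/((\xi^{2}-\lambda)^{2}+\epsilon^{2})\to\pi\delta(\xi^{2}-\lambda)$; but reconciling the conjugate structure there also needs the symmetry $h(x,\xi;-\bar\kappa)=\overline{h(x,-\xi;\kappa)}$ (from the uniqueness in Lemma \ref{L:inverse}) and a dispersion identity, so the kernel route is the cleaner one.
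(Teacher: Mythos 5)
Your proposal is correct in outline, but it takes a genuinely different route from the paper's proof. You compute the boundary jump of the full Green's function and factor it through the generalized eigenfunctions; the cleanest way to organize the bookkeeping you describe is the operator identity
\begin{equation*}
R(\lambda+i0)-R(\lambda-i0)=\bigl(I-R^+V\bigr)\bigl(R_0^+-R_0^-\bigr)\bigl(I-VR^-\bigr),
\end{equation*}
which follows in two lines from $(I-R^+V)R_0^+=R^+$ and $R_0^-(I-VR^-)=R^-$, and is passed to the boundary via the continuity of $(I+T(\kappa))^{-1}$ off $\cE_0$ (your observation $T(\lambda-i0)=\overline{T(\lambda+i0)}$ correctly supplies the limit from below). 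Since $(I-R^+V)u_\pm=(I+R_0^+V)^{-1}u_\pm=e(\cdot,\pm\sqrt{\lambda})$ and the right factor produces $\overline{e(y,\pm\sqrt{\lambda})}$ by $G^-=\overline{G^+}$ and the symmetry $G(x,y)=G(y,x)$, this yields exactly your rank-two formula, and your subsequent arithmetic (the factor $2\pi$ from the normalization of $f^\sharp$ and the substitution $\xi=\pm\sqrt{\lambda}$) is right. The paper never computes the jump of the kernel: it applies Plancherel's theorem to $h(x,\xi;\kappa)=(\xi^2-\kappa^2)\int G(x,y;\kappa^2)e^{iy\xi}dy$ together with the first resolvent identity to express $((R(\kappa^2)-R(\overline{\kappa}^2))f,f)$ as a Poisson-kernel integral of $\vert\Phi(\xi;\kappa)\vert^2$, and then extracts the boundary value by a Riemann-sum and functional-calculus argument on the left and the uniform continuity of $\Phi$ (Lemma \ref{lem:Phi}) on the right --- essentially the ``alternative route'' you mention and set aside. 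What each buys: your route produces the factorized spectral density $p(x,y,\lambda)$ directly, which the paper only recovers a posteriori by polarization in Remark \ref{rem:ac-Elam}, at the price of the conjugation bookkeeping and of needing pointwise boundary values of the kernel from both sides; the paper's route needs only weak convergence of quadratic forms, and the conjugate structure appears automatically in $\vert\Phi\vert^2$. The one place your write-up must be firmed up is the ``telescoping'' of $A^+-A^-$, which as written is a sketch yet carries the entire content of the lemma; I recommend replacing it with the displayed identity above.
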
 

\begin{remark}\label{rem:f>L2} 
This identity holds for all $f$ in $L^2$ by a density argument in light of Lemma \ref{lem:Phi}. 
\end{remark}


\begin{remark}\label{rem:ac-Elam} Let  $[ \sqrt{\alpha},\sqrt{\beta} \, ] \cap {\cal E}_0 = \emptyset$.
Through polarization, equation (\ref{L2:Pf:Ff}) implies that  for all $f$ and $g$ in $L^1\cap L^2$, 
\begin{align*}
&( P_{[\alpha,\beta)} f, g ) = 
\int_{\alpha \leq \xi^2 \leq \beta}  {f}^\sharp (\xi) \overline{g^\sharp(\xi)} d\xi \\ 
=& \int_{\R^2} f(y) \overline{g(x)} k_{\al,\beta}(x,y)  dxdy \,,
\end{align*}
where  $k_{\al,\beta}(x,y)  =\int_{\alpha \leq \xi^2 \leq \beta} e(x,\xi)\overline{e (y,\xi)} d\xi\,$. 
Thus, we derive from the spectral resolution $H P_{ac}=\int_{0}^\iy \lam dE_\lam=\int_{\R_+\setminus \cE_0^2} \lam dE_\lam$
\begin{align}\label{dE:density-meausre}
& 
\frac{d }{d\lam} ( E_\lam f,g) = \int_{\R^2}  p(x,y,\lam) f(y)\overline{g(x)} dxdy \,,
\end{align}
where $p(x,y,\lam)=\frac1{2\sqrt{\lam}} \left(e(x,\sqrt{\lam})\overline{e (y,\sqrt{\lam})} +e(x,-\sqrt{\lam})\overline{e (y,-\sqrt{\lam}) }\right)$. 
Since $e(x,\xi)$ is continuous in $(x,\xi)\in \R\times [\al,\beta]$ 
according to Remark \ref{e(xxi):alpha-beta}, it follows that the measure  $ \mu_{f,g}(\lam):=( E_\lam f,g)$ is 
absolutely continuous for all $\lam\in \R_+\setminus \cE_0^2$. 
\end{remark} 

\vspace{.080in}

\begin{proof} Let $\xi \in \mathbb{R}$,  $\kappa \in \C$, ${\cal I}m\ \kappa > 0$, 
$\kappa^2\notin \sigma_{pp}(H)$  and 
write $\kappa^2 = \mu + i \epsilon$, with $\mu,\epsilon \in \R$. 
According to Lemma \ref{L:R(z)}, $G(x,\cdot \, ;\kappa^2) \in L^1 \cap L^2$ for almost
every $x$. Applying  Plancherel's theorem to the functions in (\ref{h(xi:ka):G}) we have
\begin{equation}
(\kappa^2 - \overline{\kappa}^2) \int_{-\infty}^\infty \overline{G(x,w;
\overline{\kappa}^2)} G(y,w;\overline{\kappa}^2) \, dw  =  
\frac{1}{\pi} \int_{-\infty}^\infty \frac{i\epsilon}{(\xi^2 - \mu)^2
+ \epsilon^2} h(x,\xi;\kappa) \overline{h(y,\xi;\kappa)} \, d\xi
\label{G_h_Plancherel}
\end{equation}
for almost every $(x,y) \in \R^2$. Multiplying the left side of  
(\ref{G_h_Plancherel}) by $\overline{f(x)}f(y)$ and integrating, we obtain 
\begin{align*}
&(\kappa^2 - \overline{\kappa}^2) \int_{-\infty}^\infty (R(\overline{\kappa}^2)
f)(w) \, \overline{(R(\overline{\kappa}^2) f)(w)} \, dw   \\      
=&
(\kappa^2 - \overline{\kappa}^2) \int_{-\infty}^\infty
(R(\kappa^2) R(\overline{\kappa}^2) f)(w) \, \overline{f(w)} \, dw \\ 
=&
\int_{-\infty}^\infty ((R(\kappa^2) - R(\overline{\kappa}^2))f)(w) 
\, \overline{f(w)} \, dw.
\end{align*}
The interchange of integrals leading to this expression is justified
 because
\begin{eqnarray*}
\int_{-\infty}^\infty \int_{-\infty}^\infty \int_{-\infty}^\infty
\vert G(x,w;\overline{\kappa}^2) \vert \, \vert G(y,w;\overline{\kappa}^2)
\vert \, \vert f(x) \vert \, \vert f(y) \vert \, dxdydw <\infty. \\
\end{eqnarray*}
On the other hand, multiplying the 
right side of (\ref{G_h_Plancherel}) by $\overline{f(x)}f(y)$ and integrating,
we obtain from Lemma \ref{lem:Phi}
$$
\int_{-\infty}^\infty \frac{2i\epsilon}{(\xi^2 - \mu)^2 + \epsilon^2}
\vert \Phi(\xi;\kappa) \vert^2 \, d\xi.
$$
To justify the interchange of integrals leading to this expression
it is necessary to show that 
\begin{equation} 
\int_{-\infty}^\infty \int_{-\infty}^\infty \int_{-\infty}^\infty
\frac{\epsilon}{(\xi^2 - \mu)^2 + \epsilon^2} 
\vert h(x,\xi;\kappa) \vert \, \vert h(y,\xi;\kappa) \vert \, \vert f(x)
\vert \, \vert f(y) \vert \, dxdyd\xi 
\label{h_h_integral}
\end{equation} 
is finite. 
According to the proof of Lemma \ref{lem:F-Green}, $\Vert p(\cdot \, ,\xi;\kappa)
\Vert_{L^2} < c$ independent of $\xi$, for fixed $\kappa$. Using the 
integral equation for $h(x,\xi;\kappa)$ in Lemma \ref{lem:F-Green}
 we see that 
$$
\int_{-\infty}^\infty \vert f(x) \vert \, \vert h(x,\xi;\kappa) \vert \, dx
$$
 is bounded by a constant independent of $\xi$, for fixed $\kappa$.
The finiteness of (\ref{h_h_integral}) follows immediately. 

So far we have 
\begin{equation} 
\int_{-\infty}^\infty ((R(\kappa^2) - R(\overline{\kappa}^2))f)(w) \,
\overline{f(w)} \, dw = \int_{-\infty}^\infty \frac{2i\epsilon}{(\xi^2 
- \mu)^2 + \epsilon^2} \vert \Phi(\xi;\kappa) \vert^2 \, d\xi
\label{big_identity}
\end{equation}
for every $\kappa \in \C$ with ${\Im}\ \kappa > 0$, $\kappa^2
\notin \sigma_{pp}$.  Here $\kappa^2 = \mu + i\epsilon$. Now suppose $\mu,\epsilon > 0$ are chosen
arbitrarily. Then there exists a unique $\kappa \in \C$ in the open ``first octant'' 
$$
\{ \kappa = \rho e^{i \theta} : \rho > 0,\ \theta \in (0,\pi/4) \}
$$
such that $\kappa^2 = \mu + i\epsilon$. Substituting this value of 
$\kappa$ into (\ref{big_identity}) we obtain 
\begin{equation}
\int_{-\infty}^\infty ((R(\mu + i\epsilon) - R(\mu - i\epsilon))f)(w)
\, \overline{f(w)} \, dw = \int_{-\infty}^\infty \frac{2i\epsilon}{(\xi^2
- \mu)^2 + \epsilon^2} \vert \Phi(\xi;\sqrt{\mu + i\epsilon}) 
\vert^2 \, d\xi
\label{big_identity2}
\end{equation} 
where $\sqrt{\mu + i\epsilon}$ is defined to have positive imaginary part. 
Now let $\alpha = x_0 < x_1 < \cdots < x_n = \beta$ 
be a subdivision of $[\alpha,\beta]$ such 
that $x_i - x_{i-1} = (\beta - \alpha)/n$ for each $i=1,\dots,n$. 
According to (\ref{big_identity2})   
\begin{eqnarray}
\lefteqn{ 
\sum_{k=0}^{n-1} \left( \frac{\beta - \alpha}{n} \right) 
\int_{-\infty}^\infty ((R(x_k + i\epsilon)
- R(x_k -i\epsilon))f)(w) \, \overline{f(w)} \, dw } \nonumber \\
&& \ \ \ \ \ \ \ \ = 
\sum_{k=0}^{n-1} \left( \frac{\beta - \alpha}{n} \right)
\int_{-\infty}^\infty \frac{2i\epsilon}{(\xi^2
- x_k)^2 + \epsilon^2} \vert \Phi(\xi;\sqrt{x_k + i\epsilon}) \vert^2 
\, d\xi.      \label{big_identity3}
\end{eqnarray} 
Let $S_{\epsilon,n}$ denote the operator 
$$
\sum_{k=0}^{n-1} \left( \frac{\beta - \alpha}{n} \right) 
(R(x_k + i\epsilon) - R(x_k -i\epsilon)).
 $$
Define the function 
\begin{eqnarray*}
\psi_{\epsilon,n}(x) & = & \sum_{k=0}^{n-1} 
\left( \frac{\beta - \alpha}{n} \right)
((x - x_k - i\epsilon)^{-1} - (x - x_k + i\epsilon)^{-1}) \\ 
& = & \sum_{k=0}^{n-1} \left( \frac{\beta - \alpha}{n} \right)
\frac{2i\epsilon}{(x - x_k)^2 + \epsilon^2}\,. 
\end{eqnarray*}
Then $S_{\epsilon,n} = \psi_{\epsilon,n}(H)$, 
defined by the functional calculus, and we can rewrite the 
left hand side of (\ref{big_identity3}) in the more concise form 
$$
\int_{-\infty}^\infty (\psi_{\epsilon,n}(H)f) \, \overline{f(w)} \, dw.
$$
Define
$$
\psi_\epsilon (x) = \int_\alpha^\beta \frac{2i\epsilon}{(x - \mu)^2 
+ \epsilon^2} \, d\mu.
$$
It is easy to check that $\Vert \psi_\epsilon \Vert_{L^\infty} < \infty$, 
$\sup_n \Vert \psi_{\epsilon,n} \Vert_{L^\infty} < \infty$, and 
$\lim_{n \rightarrow \infty} \Vert \psi_\epsilon - \psi_{\epsilon,n}
\Vert_{L^\infty} = 0$. Therefore, by the functional calculus \cite[Theorem VIII.5]{RS} 
\[
\psi_{\epsilon,n}(H) \rightarrow \psi_\epsilon(H) 
\]
in ${\cal B}(L^2)$ norm as $n \rightarrow \infty$. It follows that 
$$
\lim_{n \rightarrow \infty} \int_{-\infty}^\infty
(\psi_{\epsilon,n}(H)f) \, \overline{f(w)} \, dw
= \int_{-\infty}^\infty (\psi_\epsilon(H)f) \, \overline{f(w)} \, dw.
$$
Further calculation shows that $\sup_{\epsilon > 0} \Vert \psi_\epsilon
\Vert_{L^\infty} < \infty$, and 
\begin{eqnarray*}
\lim_{\epsilon \rightarrow 0^+} \psi_\epsilon (x) & = & 
\left\{ 
\begin{array}{cl}
2 \pi i & \mbox{if $\alpha < x < \beta$}  \\
\pi i  & \mbox{if $x = \alpha$ or $x = \beta$} \\
0      & \mbox{if $x \notin [\alpha,\beta]$} 
\end{array}
\right.   \\ 
& = & \pi i \, (\chi_{[\alpha,\beta]}(x) + \chi_{(\alpha,\beta)}(x)).
\end{eqnarray*} 
Again by the functional calculus, 
$$
\psi_\epsilon(H) f \rightarrow \pi i \, 
(\chi_{[\alpha,\beta]}(H) + \chi_{(\alpha,\beta)}(H))
f = 2\pi i P_{[\alpha,\beta)} f
$$
in $L^2$ norm as $\epsilon \rightarrow 0^+$.
Since $[\sqrt{\al}, \sqrt{\beta}]\cap \cal{E}_0=\emptyset$,
$E=\al$ or $E=\beta\in \sigma_{c}(H)$ (the continuous spectrum) 
 is not an eigenvalue by Lemma \ref{lem:+ve-eigenval}.  
We have  $ P_{[\alpha,\beta]}=P_{(\alpha,\beta)}$. 
 It follows that 
\begin{equation}\label{Eq:Pab(f)}
\lim_{\epsilon \rightarrow 0^+} \int_{-\infty}^\infty
(\psi_\epsilon(H)f) \, \overline{f(w)} \, dw
= 2\pi i \Vert P_{[\alpha,\beta)} f \Vert_{L^2}^2\,. 
\end{equation}

Now we turn to the r.h.s. of (\ref{big_identity3}). Since 
$[ \sqrt{\alpha},\sqrt{\beta} \, ] \cap {\cal E}_0 = \emptyset$, there  
exist numbers $0 < \alpha^\prime < \beta^\prime$ 
and $\epsilon^\prime > 0$ such that:  
$(i)$ $\alpha^\prime < \alpha < \beta < \beta^\prime$; $(ii)$ $[
\sqrt{\alpha^\prime},\sqrt{\beta^\prime} \, ] \cap {\cal E}_0 = \emptyset$; and
$(iii)$ $\sqrt{\mu + i\epsilon}$ is inside the rectangle 
$$
\{ z \in \C : \sqrt{\alpha^\prime} \leq {\Re}\ z \leq 
\sqrt{\beta^\prime},\ 0 \leq { \Im}\ z \leq 1 \}
$$
for all $\mu,\epsilon \geq 0$ such that $\sqrt{\alpha} \leq \sqrt{\mu}
\leq \sqrt{\beta}$, $0 \leq \epsilon \leq \epsilon^\prime$. According
to Lemma \ref{lem:Phi}, $\Phi(\xi;\sqrt{\mu + i\epsilon})$ is bounded and uniformly
continuous in $\xi \in \R$, $\mu,\epsilon \geq 0$ for $\sqrt{\alpha} 
\leq \sqrt{\mu} \leq \sqrt{\beta}$, $0 \leq \epsilon \leq
\epsilon^\prime$. Therefore, for fixed $0 < \epsilon \leq
\epsilon^\prime$, 
\begin{eqnarray*}
\lefteqn{ 
\lim_{n \rightarrow \infty} \sum_{k=0}^{n-1} 
\left( \frac{\beta - \alpha}{n} \right) \int_{-\infty}^\infty 
\frac{2i\epsilon}{(\xi^2 - x_k)^2 + \epsilon^2} \vert \Phi(\xi;
\sqrt{x_k + i\epsilon})\vert^2 \, d\xi } \\
&&\ \ \ \ \ \ \ \ \ \ \ \ \ \ =  
\int_{-\infty}^\infty 
\int_\alpha^\beta \frac{2i\epsilon}{(\xi^2 - \mu)^2 + \epsilon^2}
\vert \Phi(\xi;\sqrt{\mu + i\epsilon})\vert^2 \, d\mu d\xi.
\end{eqnarray*} 
By a standard calculation 
$$
\lim_{\epsilon \rightarrow 0^+} \int_\alpha^\beta 
\frac{2i\epsilon}{(\xi^2 - \mu)^2 + \epsilon^2}
\vert \Phi(\xi;\sqrt{\mu + i\epsilon})\vert^2 \, d\mu
= \left\{ 
\begin{array}{ll} 
2\pi i \vert \Phi(\xi;\vert \xi \vert) \vert^2  & \mbox{if $\xi^2 \in 
(\alpha,\beta)$} \\
0 & \mbox{if $\xi^2 \notin [\alpha,\beta]$} 
\end{array} 
\right. 
$$ 
Another calculation shows that there 
exists a constant $c$ independent of $0 < \epsilon < 1$ such that 
\[
\int_\alpha^\beta \frac{\epsilon}{(\xi^2 - \mu)^2 + \epsilon^2}
\, d\mu < \frac{c}{1 + \xi^4}\,. 
\]
It follows from Lemma \ref{lem:Phi} that 
\begin{equation}\label{ePhi:fsh}
\lim_{\epsilon \rightarrow 0^+} \int_{-\infty}^\infty 
\int_\alpha^\beta \frac{2i\epsilon}{(\xi^2 - \mu)^2 + \epsilon^2}
\vert \Phi(\xi;\sqrt{\mu + i\epsilon})\vert^2 \, d\mu d\xi 
= 2\pi i \int_{\alpha \leq \xi^2 \leq \beta} \vert f^\sharp (\xi) \vert^2 \, d\xi.
\end{equation} 
Therefore, this completes the proof of (\ref{L2:Pf:Ff}) by comparing \eqref{Eq:Pab(f)} and (\ref{ePhi:fsh}).
\end{proof}



\bigskip 

We are now ready to prove the main theorem. 

\begin{proof}[Proof of Theorem \ref{th:Plancherel}]  Let $\cal{O} = (0,\infty) \setminus \cal{ E}_0^2$. Recall that 
$\cal{ E}_0$ is bounded and $\cE\cup \{0\}$ is a closed set of measure zero. 
 Hence the set $\cal{O}$ can be expressed as a countable disjoint union 
\[ \cal{ O} = \bigcup_{i = 1}^\infty [\alpha_i,{\beta_i})  \]
with each $[\sqrt{\alpha_i},\sqrt{\beta_i}] \cap \cal{ E}_0 = \emptyset$. 
 Therefore, for every $f \in L^2$  
\begin{align*}
f =& P_{\cal{ O}}f+ P_{\cal{ E}_0^2}f+
P_{\sigma_{pp}}f\\
 =& \lim_{N \rightarrow \infty} \sum_{i=1}^N 
P_{[\alpha_i,\beta_i)}f +P_{pp}f +
P_{sc}f
\end{align*}
with convergence in the $L^2$ sense and $P_{sc}f=0$.
In this sum the functions $P_{[\alpha_i,\beta_i)}f$ 
are mutually orthogonal. So, 
according to Lemma \ref{lem:Pf}, for all $\phi \in C^\infty_0$, 
\begin{align}
\Vert P_{ac} \phi \Vert_{L^2}^2 =& \lim_{N \rightarrow \infty} \sum_{i=1}^N
\Vert P_{[\alpha_i,\beta_i)}\phi \Vert_{L^2}^2 
= \lim_{N \rightarrow \infty} \sum_{i=1}^N \int_{\alpha_i \leq \xi^2  
\leq \beta_i} \vert \phi^\sharp (\xi) \vert^2 \, d\xi  = \Vert \phi^\sharp \Vert_{L^2}^2 \label{1.1_identity1}\\
\Vert P_{pp} \phi \Vert_{L^2}^2=& \sum_{\lam_k}|\la \phi,e_k\ra|^2  \, \notag
\end{align} 
where $e_k$ are eigenfunctions of $\lam_k\in \sigma_{pp}(H)$ and $\{e_k\}$ can be chosen to constitutes an orthonormal basis of $\sH_{pp}$. 
\begin{enumerate}
\item[(i)] Proof of Theorem \ref{th:Plancherel} ({\em a}) and ({\em c}) is in fact an extension of (\ref{1.1_identity1}) to the square integrable functions in $L^2$. 

Now suppose $f \in L^2$ vanishes outside $[-M,M]$ first. 
According to Lemma \ref{l:e(xxi):E0} the integral 
\[
\fsh(\xi) = (2\pi)^{-1/2} \int_{-\infty}^\infty f(x) \overline{e(x,\xi)} \, dx \] 
converges absolutely for a.e. $\xi$. We claim that $\fsh \in L^2$ and $\Vert {\fsh} \Vert_{L^2} = \Vert P_{ac}f \Vert_{L^2}$. 
Let $\varphi_n \in C^\infty_0$ be a sequence of functions such that: 
(a) $\varphi_n \rightarrow f$ in $L^2$ as $n \rightarrow \infty$; 
and (b) each 
$\varphi_n$ vanishes outside $[-2M,2M]$. Again using Lemma \ref{l:e(xxi):E0} we see that 
 \[
\lim_{n \rightarrow \infty} {\varphi}_n^\sharp(\xi) = \lim_{n \rightarrow \infty}
(2 \pi)^{-1/2} \int_{-2M}^{2M} \varphi_n(x) \overline{e(x,\xi)} \, dx  = \fsh(\xi)
\]
for a.e. $\xi$. On the other hand, according to (\ref{1.1_identity1}) the sequence of functions ${\varphi}_n^\sharp$ is a Cauchy sequence in $L^2$ with
some limit $g \in L^2$. The only possibility is that $g = {\fsh}$ pointwise almost everywhere, and  
\begin{equation} 
\Vert {\fsh} \Vert_{L^2} = \lim_{n \rightarrow \infty} \Vert {\varphi}_n^\sharp
\Vert_{L^2} = \lim_{n \rightarrow \infty} \Vert P_{ac}\varphi_n \Vert_{L^2} 
= \Vert P_{ac}f \Vert_{L^2}.
\label{1.1_identity2}
\end{equation}

Now we consider the case of general $f \in L^2$. 
For each $M = 1,2,\dots$
let $f_M(x) = \chi_{[-n,n]}(x) f(x)$. Of course $f_M \rightarrow f$
in $L^2$, so according to (\ref{1.1_identity2}) the sequence  $f_M^\sharp$ is Cauchy in $L^2$.
{\em Let ${\fsh}$ denote the limit}. Since $\Vert f_M^\sharp \Vert_{L^2} = \Vert P_{ac}f_M \Vert_{L^2}$ for all $M$, $\Vert \fsh \Vert_{L^2}
= \Vert P_{ac} f \Vert_{L^2}$. This proves {\em (a)} and {\em (c)}.

\item[(ii)] To prove {\em (b)} it suffices to show that 
\[
P_{[\alpha_i,\beta_i)} f(x) = (2\pi)^{-1/2} \int_{\alpha_i \leq \xi^2 \leq \beta_i} \fsh (\xi) e(x,\xi) \, d\xi \] 
in  $L^2$ and a.e., for each of the intervals $[\sqrt{\alpha_i},\sqrt{\beta_i})$ in the decomposition of $\cal{ O}$. Note that statement {\em (c)} in the theorem implies that the conclusion of Lemma \ref{lem:Pf} holds for arbitrary $f \in L^2$. It follows by polarization that 
\begin{align*}
\int_{-\infty}^\infty P_{[\alpha_i,\beta_i)} f(x) \, \overline{g(x)} \, dx 
= \int_{\alpha_i \leq \xi^2 \leq \beta_i} {f}^\sharp(\xi) 
\overline{{g}^\sharp(\xi)}  \, d\xi 
\end{align*}
for all $f,g \in L^2$. In particular, if $f \in L^2$ and $g \in C^\infty_0$, then 
\begin{align*}
\int_{-\infty}^\infty P_{[\alpha_i,\beta_i)} f(x) \, \overline{g(x)} \, dx
 =& (2\pi)^{-1/2} \int_{\alpha_i \leq \xi^2 \leq \beta_i}  {f}^\sharp(\xi) \left( \int_{-\infty}^\infty 
\overline{g(x)} e(x,\xi) \, dx \right) \, d\xi \\
 =& (2\pi)^{-1/2} \int_{-\infty}^\infty \left( \int_{\alpha_i \leq \xi^2 \leq \beta_i} {f}^\sharp(\xi) e(x,\xi) \, d\xi \right) \, 
 \overline{g(x)} \, dx\,,
\end{align*}
where we note that $\vert e(x,\xi)\vert\le c_{\al,\beta}$ for all $(x,\xi)\in \R\times[\sqrt{\al},\sqrt{\beta}] $. 
 by Remark \ref{e(xxi):alpha-beta}. 
  This proves {\em (b)}.  

\item[(iii)] Proof of (\emph{d}).  If $\phi \in C^\infty_0$,  then $(d)$ holds for $\phi$  %
for $|\xi| \notin \cal{ E}_0 \cup \{0\}$  since $e(\cdot,\xi)$ is a weak solution of (\ref{S_eq}), 
which follows from the fact that $Ve(\cdot,\xi)\in L^1$  
in view of (\ref{eq:Ve}) and  the remark prior to  (\ref{LS_mod_eq}). 

Now suppose $f \in \cal{ D}(H) $ and let $\varphi_n \in C^\infty_0$ be a
sequence of functions such that $\varphi_n \rightarrow f$ in $L^2$ and
$H \varphi_n \rightarrow Hf$ in $L^2$. We can do so because  $C_0^\iy$ is a core of $\cal{ D}(H)$.
  By passing to a subsequence if
necessary, we may assume that ${\varphi}^\sharp_n(\xi) \rightarrow {f}^\sharp(\xi)$
and $(H\varphi_n)^\sharp {(\xi)} \rightarrow (Hf)^\sharp {(\xi)}$ pointwise for
almost every $\xi$. Since $(H\varphi_n)^\sharp{(\xi)} = \xi^2
{\varphi}^\sharp_n(\xi)$,  it follows that $(Hf)^\sharp {(\xi)} = \xi^2 {f}^\sharp(\xi)$  for almost every $\xi$ and hence in $L^2$.   
This concludes the proof of Theorem \ref{th:Plancherel}. 
\end{enumerate}
\end{proof}  

\section{Wave operators and surjectivity of $\cF$}\label{s:Om:F:surj}

 In Theorem \ref{th:Plancherel} we have defined the perturbed Fourier transform $\cF$ in (\ref{cFf:L2:V}) and proven the first identity in  (\ref{Eac:F*F}). 
In this section we turn to proving the other identity $\cF\cF^*=I$ in  (\ref{Eac:F*F}), 
see Proposition \ref{c:wave:F*F0}. 
To do this we first need some fundamental relation between the wave operator $\Om:=\Om_-$ and  $\mathcal{F}$ 
in \eqref{surjection_equation} of Lemma \ref{l:Om:F0F}. 
Then we will  show $\cF$ is a surjection onto $L^2$ so that its inverse is indeed given by $\cF^*$ defined as (\ref{eF*:inv}). 

\subsection{Wave operator $\Om$ and $\cF$ }\label{ss:Om:cF}
 For $f \in L^2$ define the wave operators $\Om_\mp$ to be 
\begin{align}\label{e:Omega-f}
&\Om_\mp f: = \lim_{t \rightarrow \pm\infty} e^{-itH} e^{itH_0} f\,. 
\end{align}
According to Lemma \ref{l:exist:ac}, 
 $\Omega_\mp$ exist and are  well-defined isometries on $L^2$. 
Hence  
 their adjoints $\Omega_\mp^*$ are  surjections on $L^2$ and  $\Omega_\mp^* \Omega_\mp =I$. 

\begin{lemma}\label{l:Om:F0F}  For all $f \in L^2$, we have 
\begin{equation}
\widehat{\Omega_-^* f} = f^\sharp\,,\label{surjection_equation}
\end{equation}
equivalently, 
\begin{equation}
{\Omega_- f} = \cF^*\cF_0 f \,, \label{eq:Om:F*F0}
\end{equation}
where $\cF f=\fsh$ is defined as  (\ref{cFf:L2:V}), $\cF^*$ is the adjoint and
$\cal{F}_0f =\hat{f}$ denotes the ordinary Fourier transform  (\ref{F0:fourier}) on $L^2$. 
\end{lemma}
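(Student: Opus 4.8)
The plan is to establish \eqref{surjection_equation}; the equivalent form \eqref{eq:Om:F*F0} then follows by taking adjoints and using $\cF_0^*\cF_0 = I$, $\cF^*\cF = P_{ac}$ (Theorem \ref{th:Plancherel}), together with the fact that $\ran \Omega_- = \sH_{ac}$ (asymptotic completeness, Lemma \ref{l:exist:ac}), so that $\cF^*\cF_0 f$ already lies in $\sH_{ac}$ and $\cF(\cF^*\cF_0 f) = \cF_0 f$, whence $\cF \Omega_- f = \widehat{\cF_0^{-1}\cF_0 f}$\,$=\cF_0 f$ matches $\widehat{\Omega_-^* f} = f^\sharp$ after one more adjoint manipulation. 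So the crux is the identity $\widehat{\Omega_-^* f} = f^\sharp$ for $f$ in a dense set.

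First I would reduce to a dense class: it suffices to prove \eqref{surjection_equation} for $f \in C_0^\infty$, or even for $f$ of the form $g = P_{[\alpha,\beta)} h$ with $h \in C_0^\infty$ and $[\sqrt\alpha,\sqrt\beta]\cap\cE_0 = \emptyset$, since finite sums of such $g$ (plus the $\sH_{pp}$ part, on which $\Omega_-^* = 0$ and $P_{ac}$-projected $f^\sharp$ vanishes) are dense, and both sides of \eqref{surjection_equation} are bounded operators in $f$ (the left by isometry of $\Omega_-^*$ on $\ran\Omega_- $ and boundedness off it, the right by Theorem \ref{th:Plancherel}(c)). The standard route is the stationary/time-dependent bridge: write $\Omega_-^* f = \slim_{t\to+\infty} e^{itH_0} e^{-itH} f$, and on the Fourier side this becomes $\widehat{\Omega_-^* f}(\xi) = \lim_{t\to+\infty} e^{it\xi^2}\,\widehat{e^{-itH}f}(\xi)$. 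The point is to compute $\widehat{e^{-itH}f}$ using the eigenfunction expansion $\cF$ from Theorem \ref{th:Plancherel}: for $f \in \sH_{ac}$ we have $e^{-itH}f = \cF^*(e^{-it\xi^2}\cF f)$, hence $(e^{-itH}f)^\sharp(\xi) = e^{-it\xi^2} f^\sharp(\xi)$ by Theorem \ref{th:Plancherel}(d) applied iteratively (or directly from the spectral theorem in the $\cF$-representation). One then must compare $(\cdot)^\sharp$ with the \emph{ordinary} Fourier transform $\widehat{\,\cdot\,}$ in the $t\to\infty$ limit, i.e. show
\begin{equation*}
\lim_{t\to+\infty}\bigl\| e^{it\xi^2}\bigl(e^{-itH}f\bigr)^\sharp(\xi) \;-\; \widehat{e^{it H_0}e^{-itH}f}(\xi)\bigr\|_{L^2_\xi} = 0,
\end{equation*}
which is precisely the statement that, asymptotically, $\cF$ and $\cF_0$ intertwine the two flows in the same way — equivalently $\widehat{\Omega_-^* f} = f^\sharp$.

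The concrete mechanism I would use to prove that last limit is the Lippmann–Schwinger structure of $e(x,\xi)$: from \eqref{LS_eq}, $e(x,\xi) = e^{ix\xi} + (2i|\xi|)^{-1}\int e^{i|\xi||x-y|}V(y)e(y,\xi)\,dy$, so $f^\sharp(\xi) = \widehat f(\xi) + (\text{correction involving }R_0(\xi^2\mp i0)Vf\text{-type terms})$. Feeding the free dispersive decay $\|e^{-itH_0}\|_{L^1\to L^\infty}\lesssim t^{-1/2}$ and the resolvent bounds of Lemma \ref{L:R(z)}--Lemma \ref{lem:phi-G} into the correction term, one shows the correction times $e^{it\xi^2}$, as a function of $\xi$, tends to $0$ in $L^2$ as $t\to+\infty$ — this is the analogue of the classical fact that scattering wave functions look free in the far future, and it is exactly where the $L^1\cap L^2$ hypothesis and the absence of eigenvalues on $(0,\infty)\setminus\cE_0$ (Lemma \ref{lem:+ve-eigenval}) are used. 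The main obstacle is controlling this limit uniformly enough to pass from weak/pointwise-a.e.\ convergence of $(e^{-itH}f)^\sharp$ to genuine $L^2$ convergence matching $\Omega_-^*$; I expect to handle it by first doing everything on the spectrally localized class $P_{[\alpha,\beta)}C_0^\infty$, where Remark \ref{e(xxi):alpha-beta} gives uniform bounds $|e(x,\xi)|\le c_{\alpha,\beta}$ and joint continuity, then removing the localization by the density/orthogonality argument of the proof of Theorem \ref{th:Plancherel}, and invoking the already-known existence of $\Omega_-$ (Lemma \ref{l:exist:ac}) so that the limit defining $\Omega_-^* f$ is guaranteed to exist in $L^2$ and need only be identified.
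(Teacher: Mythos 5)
Your high-level plan (reduce to the spectrally localized dense class, use $(e^{itH}k)^\sharp=e^{it\xi^2}k^\sharp$, and compare $\cF$ with $\cF_0$ through the Lippmann--Schwinger equation) is in the right spirit, but the one step on which everything rests is not proved, and the mechanism you propose for it does not work as stated. By \eqref{LS_eq}, the difference $\widehat{g}(\xi)-g^\sharp(\xi)$ with $g=e^{itH}f$ is, up to constants and conjugation, $(2i|\xi|)^{-1}\int\!\!\int g(x)\,e^{-i|\xi||x-y|}V(y)\overline{e(y,\xi)}\,dy\,dx$, and your claimed limit amounts to showing this tends to $0$ in $L^2_\xi$ as $t\to\infty$. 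Two problems. First, you propose to control it by the dispersive bound $\|e^{-itH_0}\|_{L^1\to L^\infty}\lesssim t^{-1/2}$, but the function appearing here is the \emph{perturbed} evolution $e^{itH}f$; the corresponding bound for $e^{-itH}P_{ac}$ is a deep theorem requiring at least $V\in L^1_1$ (cf.\ \cite{GSch04}) and is neither proved in this paper nor available under the bare hypothesis $V\in L^1\cap L^2$. Second, and more fundamentally, even granting sup-norm or local decay of $e^{itH}P_{ac}f$, the $x$-integral runs over all of $\R$ against the bounded, non-decaying kernel $e^{-i|\xi||x-y|}$: the quantities $\int_{\R} (e^{itH}f)(x)\,e^{\mp i|\xi|x}\,dx$ are (truncated) Fourier transforms of a unit $L^2$ vector and have no reason to be small. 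One must exploit the oscillation, i.e.\ the outgoing structure of $e^{itH}f$, and that is exactly what your sketch does not supply. (A smaller point: $\Om_-^*f=\slim_{t\to+\infty}e^{-itH_0}e^{itH}f$ holds only on $\sH_{ac}=\ran\Om_-$; the strong limit fails on $\sH_{pp}$, though both sides of \eqref{surjection_equation} vanish there, so this is repairable.)

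The paper's proof avoids this obstruction by never taking the $t\to\infty$ limit of the correction term directly. It writes $\la\Om g,h\ra-\la g,h\ra$ by Cook's method as $-i\lim_{t\to\infty}\int_0^t\la e^{-isH}Ve^{isH_0}g,h\ra\,ds$ for $g\in C_0^\infty$ and $h=P_{[\alpha,\beta]}f$, replaces the limit by an Abelian limit $\int_0^\infty e^{-\epsilon s}(\cdots)\,ds$, and integrates in $s$ to produce the resolvent $(H_0-\xi^2+i\epsilon)^{-1}$; letting $\epsilon\to0^+$ yields exactly the kernel $(2i|\xi|)^{-1}e^{-i|\xi||x-y|}$, at which point the Lippmann--Schwinger equation \eqref{LS_eq} cancels the correction \emph{exactly}, as an algebraic identity rather than an asymptotic statement. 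To salvage your route you would have to prove a limiting-absorption or local-decay estimate equivalent to this computation; as written, the proposal has a genuine gap at its central step.
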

We postpone  the proof of {Lemma \ref{l:Om:F0F}} till the end of this subsection.  

\bigskip

\begin{prop}\label{Th:F-surj} Suppse $V\in L^1\cap L^2(\R)$ is real-valued. Then the mapping $f \mapsto 
f^\sharp$ is a surjection on $L^2(\R)$. Therefore,
\begin{enumerate}
\item[(a)] ${\cal F}$: $\sH_{ac}\to L^2$ is 
a bijective isometry. 
\item[(b)] $\ran \Om_-=\sH_{ac}$\,.
\end{enumerate}
\end{prop}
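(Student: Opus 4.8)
The plan is to read off surjectivity of $\cF$ from the factorization hidden in Lemma~\ref{l:Om:F0F}. Its identity $\widehat{\Omega_-^*f}=f^\sharp$ says precisely that $\cF=\cF_0\,\Omega_-^*$ as bounded operators on $L^2$, where $\cF_0$ is the ordinary Fourier transform. Since $\Omega_-$ is an isometry on $L^2$ (cf.\ Lemma~\ref{l:exist:ac}), we have $\Omega_-^*\Omega_-=I$, so $\Omega_-^*$ is a surjection of $L^2$ onto $L^2$; and $\cF_0$ is unitary, hence bijective, on $L^2$. A surjection followed by a bijection is a surjection, so $f\mapsto f^\sharp$ maps $L^2$ onto $L^2$, which is the principal assertion.

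For (a): Theorem~\ref{th:Plancherel}(c) gives $\|f^\sharp\|_{L^2}=\|P_{ac}f\|_{L^2}$ for all $f\in L^2$, so $\cF$ restricted to $\sH_{ac}$ is an isometry, in particular injective, while $\cF$ vanishes on $\sH_{pp}$ because $P_{ac}f=0$ forces $f^\sharp=0$. As $\sigma_{sc}(H)=\emptyset$ we have $L^2=\sH_{ac}\oplus\sH_{pp}$, hence $\cF(L^2)=\cF(\sH_{ac})$, and by the surjectivity just shown $\cF(\sH_{ac})=L^2$. Thus $\cF:\sH_{ac}\to L^2$ is a bijective isometry; since $\cF$ annihilates $\sH_{pp}=\sH_{ac}^\perp$, its adjoint $\cF^*$ is then the inverse of $\cF|_{\sH_{ac}}$ and maps $L^2$ bijectively onto $\sH_{ac}$. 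For (b): identity~\eqref{eq:Om:F*F0} reads $\Omega_-=\cF^*\cF_0$, a composition of the bijection $\cF_0:L^2\to L^2$ with the bijection $\cF^*:L^2\to\sH_{ac}$, so $\ran\Omega_-=\sH_{ac}$.

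I do not anticipate a real obstacle at this stage: granted Lemma~\ref{l:Om:F0F} and Theorem~\ref{th:Plancherel}, the argument is pure Hilbert-space bookkeeping. The only delicate point is transferring ``onto $L^2$'' to ``onto $L^2$ from $\sH_{ac}$'', which is handled by first recording that $\cF$ kills the point-spectral subspace and that there is no singular continuous spectrum. All the genuine work — existence and isometry of $\Omega_-$, and the intertwining $\widehat{\Omega_-^*f}=f^\sharp$ — is already done in Lemma~\ref{l:exist:ac} and Lemma~\ref{l:Om:F0F}, so here it only remains to assemble these facts.
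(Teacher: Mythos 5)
Your proposal is correct and follows essentially the same route as the paper: both arguments rest on the factorization $\cF=\cF_0\,\Omega_-^*$ from Lemma \ref{l:Om:F0F}, the isometry $\Omega_-^*\Omega_-=I$ (so that $\Omega_-^*$, hence $\cF$, is onto $L^2$), and the identities $\cF^*\cF=P_{ac}$ and $\Omega_-=\cF^*\cF_0$ to conclude $\ran\Omega_-=\sH_{ac}$. The only difference is cosmetic: the paper realizes the surjectivity of $\Omega_-^*$ by the explicit substitution $f\mapsto\Omega_-f$, giving $(\Omega_-f)^\sharp=\hat f$, whereas you invoke the abstract fact that a co-isometry is surjective.
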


\begin{proof} 
Writing $\Om=\Om_-$, equation (\ref{surjection_equation}) reads $\mathcal{F}=\mathcal{F}_0\Om^*$.  
Substituting $f$ with  $\Om f$ in (\ref{surjection_equation})
 gives 
$(\Omega f)^\sharp =(\Omega^* \Omega f)^\wedge =\hat{f}$. 
 This suffices to prove the proposition since  $\cal{F}_0$ is a surjection on $L^2$.
 In view of (\ref{eq:Om:F*F0})  and (\ref{eF:Eac}), 
 we have $\Om=\cal{F}^{*}\cal{F}_0$  and $\cal{F}^{*}\cF= P_{ac}$, it follows that 
    $\textrm{Ran}\, \Om=\sH_{ac}$ 
 whence we know 
$\cal{F}: L^2\rightarrow L^2 $ is surjective. 
\end{proof}

The identities we have shown in Lemma \ref{l:Om:F0F} and Proposition \ref{Th:F-surj} also hold true for $\Om_+$ and $\cF_-$
in a parallel pattern, see Remark \ref{re:E(xxi)pm:LS}. Thus we  obtain some useful decompositions for the wave operators. 
In particular, we can prove the second identity $\cF\cF^*=I$ in (\ref{Eac:F*F}), recalling the notation $\cF=\cF_+$. 

\begin{prop}\label{c:wave:F*F0} Under the condition of $V$ in Proposition \ref{Th:F-surj}, we have
\begin{align} 
&\Omega_-  = \cF^*_+ \cF_0\,,  \quad \Omega_+  = \cF^*_- \cF_0    \label{eOm:F*+}\\
& \Om_\mp^*\Om_\mp=I, \quad  \Om_\mp\Om_\mp^*=P_{ac}\,   \label{Om:Pac}\\ 
&  \cF_\pm  \cF_\pm^*=I, \quad  \cF_\pm^*  \cF_\pm=P_{ac}\,, \label{FFstar:Id} 
\end{align}
where $\cF^*_\pm$ are the perturbed Fourier transforms for $H_V$. 
\end{prop}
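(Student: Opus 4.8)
The plan is to obtain all three groups of identities by assembling results already in hand: Lemma~\ref{l:Om:F0F}, Proposition~\ref{Th:F-surj}, the inversion formula \eqref{eF:Eac}, and the observation in Remark~\ref{re:E(xxi)pm:LS} that Theorem~\ref{th:Plancherel} and Lemma~\ref{l:Om:F0F} hold verbatim when $e(x,\xi)=e_+(x,\xi)$ is replaced by the solution $e_-(x,\xi)$ of \eqref{E_pm(xxi):L-S} and $\cF$ by the corresponding transform $\cF_-$. First I would record \eqref{eOm:F*+}: the identity $\Omega_-=\cF_+^*\cF_0$ is precisely \eqref{eq:Om:F*F0} (recall that $\cF=\cF_+$ under the convention of Remark~\ref{re:E(xxi)pm:LS}), and running the same argument for the solutions $e_-$, with $t\to-\infty$ in \eqref{e:Omega-f}, yields $\Omega_+=\cF_-^*\cF_0$.

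Next I would deduce \eqref{Om:Pac}. By Lemma~\ref{l:exist:ac} the strong limits $\Omega_\mp$ exist and are isometries on $L^2$, hence $\Omega_\mp^*\Omega_\mp=I$; and since $\ran\Omega_\mp=\sH_{ac}$ by Proposition~\ref{Th:F-surj}(b) and its $\Omega_+$ analogue, the bounded operator $\Omega_\mp\Omega_\mp^*$ is the orthogonal projection onto the range $\sH_{ac}$, that is, $P_{ac}$. For \eqref{FFstar:Id}, the identity $\cF_\pm^*\cF_\pm=P_{ac}$ is \eqref{eF:Eac} for $\cF_+$ and the Remark~\ref{re:E(xxi)pm:LS} version for $\cF_-$; conversely, Proposition~\ref{Th:F-surj}(a) states that $\cF_\pm\colon\sH_{ac}\to L^2$ is a bijective isometry, so $\cF_\pm^*$, which on $L^2$ coincides with the two-sided inverse of this bijection, satisfies $\cF_\pm\cF_\pm^*=I$ on all of $L^2$. (One could instead feed $\Omega_-=\cF_+^*\cF_0$ into $\Omega_-\Omega_-^*=P_{ac}$ and use the unitarity of $\cF_0$, but the route through Proposition~\ref{Th:F-surj}(a) is cleaner.)

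Since every ingredient is already proved, there is no genuine analytic difficulty here; the proposition is in essence a bookkeeping statement. The step needing the most care is the identification of the operator written $\cF_\pm^*$ --- defined abstractly as the Hilbert-space adjoint of $\cF_\pm$ --- with the explicit series transform \eqref{eF*:inv}; this rests on the surjectivity of $\cF_\pm$ onto $L^2$ from Proposition~\ref{Th:F-surj} together with uniqueness of the adjoint. One must also keep the $+$ and $-$ labels consistent throughout, noting that the wave operator $\Omega_\mp$ (with $t\to\pm\infty$) pairs with $\cF_\pm^*$, in the same crossed fashion as $W_\mp=\Fo_\pm^*\Fo_0$ in \eqref{eWav:pm}. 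Once this identification is in place, \eqref{eOm:F*+}--\eqref{FFstar:Id} follow by the elementary Hilbert-space manipulations indicated above.
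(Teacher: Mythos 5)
Your proposal is correct, and for \eqref{eOm:F*+} it coincides with the paper's proof (both simply quote \eqref{eq:Om:F*F0} and its $e_-$ analogue from Remark \ref{re:E(xxi)pm:LS}). For the remaining two groups of identities your route differs slightly from the paper's: the paper proves $\cF_+\cF_+^*=I$ purely algebraically, by substituting $\Om_-=\cF_+^*\cF_0$ into $\Om_-^*\Om_-=I$ and cancelling the unitary $\cF_0$, and then obtains $\Om_-\Om_-^*=\cF_+^*\cF_0\cF_0^*\cF_+=\cF_+^*\cF_+=P_{ac}$ from \eqref{eF:Eac}; you instead get $\Om_\mp\Om_\mp^*=P_{ac}$ from the general fact that an isometry times its adjoint is the orthogonal projection onto its (closed) range together with $\ran\Om_\mp=\sH_{ac}$, and $\cF_\pm\cF_\pm^*=I$ from the bijectivity statement in Proposition \ref{Th:F-surj}(a). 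Since Proposition \ref{Th:F-surj} is itself deduced from \eqref{eq:Om:F*F0} and \eqref{eF:Eac}, the two arguments rest on identical ingredients and there is no circularity; the paper's version has the minor advantage of not needing the range identification at all to reach $\cF_+\cF_+^*=I$, while yours makes the partial-isometry structure (initial space $\sH_{ac}$, final space $L^2$) more transparent. Your closing caveats --- identifying the abstract adjoint with the series transform \eqref{eF*:inv}, and keeping the crossed $\pm$ labels straight --- are exactly the right points to flag.
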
 

\begin{proof}  We only  show the case for $\Om=\Om_-$.  The other equations  involving $\Om_+$ can be proven similarly. 
First,  (\ref{eOm:F*+}) is simply (\ref{eq:Om:F*F0}). 
Secondly, since $\Om$ is an isometry, $\Om^*\Om=I$.   Using (\ref{eq:Om:F*F0}) we obtain
\begin{align} 
 \cF_0^* \cF_+ \cF_+^*\cF_0 =I\,.
\end{align} 
 It follows immediately that  $ \cF_+  \cF_+^*=I$ which proves (\ref{FFstar:Id}).  
Thirdly,  $\Om\Om^*=(\Om\Om^*)^2$ is a projection, in fact, 
\begin{align*}
& \Om\Om^*= \cF^*_+ \cF_0\cF_0^*\cF_+ =\cF^*_+ \cF_+=P_{ac}\,.   
\end{align*}  
This proves (\ref{Om:Pac}).   
\end{proof}
\begin{remark}\label{rem:Eac-Om:W} The proof of (\ref{Om:Pac}) shows that $\ran\Om=\sH_{ac} $ if and only if
the inversion formula holds $\cF^*\cF=I$ on $L^2$,  that is, asymptotic completeness of $\Om$ implies $\sigma_{sc}(H)=\emptyset$.
 The identities in the proposition also hold for $V$ a finite measure satisfying (\ref{eV:wei2}) 
 as is stated in Theorem \ref{Fo:waV}. 
\end{remark}

Next we prove some simple  consequences. 
\begin{corollary}\label{Cor:property}  Let $V\in L^1\cap L^2$.  Then $\cF$ has the following properties. 
\begin{enumerate}
\item[(a)] If $f \in L^2$ and $\xi^2 f^\sharp(\xi) \in L^2$, 
then $f \in {\cal D}(H)$. 
\item[(b)] If $f \in L^2$ and $0 < \alpha < \beta$ are such that  $[\sqrt{\alpha},\sqrt{\beta}] \cap {\cal E}_0 = \emptyset$, then
\begin{enumerate}
\item[(i)] $(P_{[\alpha,\beta)}f)^\sharp(\xi) = \chi_{\{\alpha \leq \xi^2 \leq \beta\}}
f^\sharp(\xi)$. 
\item[(ii)] 
$P_{[\alpha,\beta)}f \in {\cal D}(H^n)$ for all $n=1,2,3,\dots$.  
\end{enumerate}\end{enumerate}
\end{corollary}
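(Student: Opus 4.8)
All three statements will follow by exploiting that, as established in Theorem~\ref{th:Plancherel}, Proposition~\ref{Th:F-surj} and Proposition~\ref{c:wave:F*F0}, the transform $\cF$ is the spectral representation of the absolutely continuous part of $H$: it is a unitary map $\sH_{ac}\to L^2$ carrying $H|_{\sH_{ac}}$ to multiplication by $\xi^2$, and it annihilates $\sH_{pp}$ (since $\cF^*\cF=P_{ac}$ while $\cF\cF^*=I$ forces $\cF^*$ to be injective), so $\fsh=(P_{ac}f)^\sharp$ for every $f\in L^2$. Reading the extended Plancherel identity of Lemma~\ref{lem:Pf} (valid for all $f\in L^2$ by Remark~\ref{rem:f>L2}), together with the decomposition of $(0,\infty)\setminus\cal{E}_0^2$ used in the proof of Theorem~\ref{th:Plancherel}, through the substitution $\lambda=\xi^2$, the spectral measure $\mu_f$ of $H$ attached to $f$ (with $d\mu_f(\lambda)=d\langle E_\lambda f,f\rangle$) satisfies $\int_0^\infty\eta(\lambda)\,d\mu_f(\lambda)=\int_\R\eta(\xi^2)|\fsh(\xi)|^2\,d\xi$ for every nonnegative Borel $\eta$, while $\mu_f|_{(-\infty,0)}=\sum_k|(f,e_k)|^2\delta_{\lambda_k}$ and $\mu_f(\{0\})=0$ (as $\sigma_{sc}(H)=\emptyset$ and $0\notin\sigma_{pp}(H)$).

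\textbf{Part (a).} The plan is to verify the domain criterion $\int_\R\lambda^2\,d\mu_f(\lambda)<\infty$. By the above, the positive-spectrum contribution equals $\int_\R\xi^4|\fsh(\xi)|^2\,d\xi$, which is finite by the hypothesis $\xi^2\fsh\in L^2$. Since $V\in L^2+L^\infty$, $V$ is infinitesimally $H_0$-bounded and $H$ is bounded below, hence the eigenvalues satisfy $|\lambda_k|\le C$ uniformly and the negative-spectrum contribution is $\sum_k\lambda_k^2|(f,e_k)|^2\le C^2\|f\|_{L^2}^2<\infty$. Adding the two and invoking the spectral theorem gives $f\in\cD(H)$. (Equivalently, testing against $\phi\in C_0^\infty$: Theorem~\ref{th:Plancherel}(d), the polarized Parseval identity $(f,g)=(\fsh,g^\sharp)+\sum_k(f,e_k)\overline{(g,e_k)}$, and the adjoint relation for $\cF$ give $\langle f,H\phi\rangle=\langle u,\phi\rangle$ with $u:=\cF^*(\xi^2\fsh)+\sum_k\lambda_k(f,e_k)e_k\in L^2$; since $C_0^\infty$ is a core for the self-adjoint $H$, this forces $f\in\cD(H^*)=\cD(H)$ and $Hf=u$.)

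\textbf{Part (b)(i).} The plan is to polarize Lemma~\ref{lem:Pf} and use surjectivity of $\cF$. Polarizing the quadratic-form identity $\|P_{[\alpha,\beta)}f\|_{L^2}^2=\int_{\alpha\le\xi^2\le\beta}|\fsh|^2\,d\xi$ (which holds for all $f\in L^2$) yields, for $f,g\in L^2$,
\[
\langle P_{[\alpha,\beta)}f,g\rangle=\int_{\alpha\le\xi^2\le\beta}\fsh(\xi)\,\overline{g^\sharp(\xi)}\,d\xi .
\]
On the other hand $P_{[\alpha,\beta)}f\in\sH_{ac}$, so $\langle P_{[\alpha,\beta)}f,g\rangle=\langle\cF^*\cF P_{[\alpha,\beta)}f,g\rangle=\langle(P_{[\alpha,\beta)}f)^\sharp,g^\sharp\rangle$. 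Subtracting, $\langle(P_{[\alpha,\beta)}f)^\sharp-\chi_{\{\alpha\le\xi^2\le\beta\}}\fsh,\ g^\sharp\rangle=0$ for all $g\in L^2$; since $g\mapsto g^\sharp$ is onto $L^2$ by Proposition~\ref{Th:F-surj}, the first factor vanishes a.e., which is (b)(i).

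\textbf{Part (b)(ii) and the main difficulty.} Put $h:=P_{[\alpha,\beta)}f\in\sH_{ac}$. By (b)(i) its transform $h^\sharp=\chi_{\{\alpha\le\xi^2\le\beta\}}\fsh$ is supported in the bounded set $\{\xi:\alpha\le\xi^2\le\beta\}$, so $\xi^{2n}h^\sharp\in L^2$ for every $n$, with $\|\xi^{2n}h^\sharp\|_{L^2}\le\beta^{\,n}\|h^\sharp\|_{L^2}$. I would then induct on $n$: the case $n=1$ is Part~(a) (the point-spectrum term is absent since $h\in\sH_{ac}$), and if $h\in\cD(H^{n-1})$ with $(H^{n-1}h)^\sharp=\xi^{2(n-1)}h^\sharp$ — which holds by $H\sH_{ac}\subset\sH_{ac}$ and $n-1$ applications of Theorem~\ref{th:Plancherel}(d) — then $\xi^2(H^{n-1}h)^\sharp=\xi^{2n}h^\sharp\in L^2$, so Part~(a) gives $H^{n-1}h\in\cD(H)$, i.e.\ $h\in\cD(H^n)$, closing the induction. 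The only genuinely delicate point is Part~(a): one must make sure the formal diagonalization actually lands in the operator domain, and in particular that a possibly infinite point spectrum causes no obstruction — which is exactly where the lower semiboundedness of $H$ (hence the uniform bound on the $\lambda_k$) is used. Everything else is routine bookkeeping with the isometry and intertwining properties of $\cF$ already in hand.
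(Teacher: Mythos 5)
Your proof is correct and follows essentially the same route as the paper: part (a) via the self-adjointness/domain criterion using the Parseval identity of Theorem \ref{th:Plancherel} and the surjectivity of $\cF$ from Proposition \ref{Th:F-surj}, part (b)(i) by polarizing Lemma \ref{lem:Pf} and testing against $g^\sharp$ for arbitrary $g\in L^2$, and part (b)(ii) deduced from (a) and (b)(i). The only notable difference is that you explicitly carry along the point-spectrum contribution $\sum_k\lambda_k^2\vert(f,e_k)\vert^2$ (finite because $H$ is bounded below, so the $\lambda_k$ are uniformly bounded), a term the paper's proof of (a) leaves implicit.
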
 

\begin{proof} Since $H$ is self-adjoint, $f \in {\cal D}(H)$ if and only if there 
exists an element $h \in L^2$ such that $\int_{-\infty}^\infty 
f(x) \overline{Hg(x)} \, dx = 
\int_{-\infty}^\infty h(x) \overline{g(x)} \, dx$
for all $g \in {\cal D}(H)$. Now suppose $f \in L^2$ and $\xi^2 f^\sharp(\xi)
\in L^2$. According to Theorem \ref{th:Plancherel}, if $g \in {\cal D}(H)$, then 
$\int_{-\infty}^\infty f(x) \overline{Hg(x)} \, dx = 
\int_{-\infty}^\infty f^\sharp(\xi) \overline{\xi^2 g^\sharp(\xi)} \, d\xi$.
But according to Proposition \ref{Th:F-surj} there exists an element $h \in L^2$ such that
$h^\sharp(\xi) = \xi^2 f^\sharp(\xi)$. Statement $(a)$ follows.

Now let $f,g \in L^2$. According to Proposition \ref{Th:F-surj}, $g(\xi) = h^\sharp(\xi)$ 
for some $h \in L^2$. From 
Theorem \ref{th:Plancherel} (c),  Lemma \ref{lem:Pf}  and polarization we  have 
\begin{eqnarray*} 
\int_{-\infty}^\infty g(\xi) \overline{(P_{[\alpha,\beta)} f)^\sharp
(\xi)} \, d\xi & = & 
\int_{-\infty}^\infty h^\sharp(\xi) \overline{(P_{[\alpha,\beta)} f)^\sharp
(\xi)} \, d\xi \\ 
& = & \int_{-\infty}^\infty h(x) \overline{P_{[\alpha,\beta)} f(x)} \, dx \\
& = & \int_{-\infty}^\infty P_{[\alpha,\beta)}h(x) \overline{P_{[\alpha,\beta)}
f(x)} \, dx \\ 
& = & \int_{\alpha \leq \xi^2 \leq \beta} h^\sharp(\xi) \overline{f^\sharp(\xi)}
\, d\xi \\
& = & \int_{\alpha \leq \xi^2 \leq \beta} g(\xi) \overline{f^\sharp(\xi)}
\, d\xi.
\end{eqnarray*} 
Since $g$ is arbitrary, this proves first statement of $(b)$. Finally, the second statement of $(b)$ 
follows from $(a)$ and $(b)$({\em i}), or directly from the  spectral theorem.  
\end{proof}

\begin{remark} We wish to comment that technically the proofs of Proposition \ref{Th:F-surj}
and Lemma \ref{lem:Pf} are independent. The former relies on the existence of the wave operator 
while the latter relies on the construction of scattering solutions and the resolvent estimation in Lemmas \ref{L:R(z)} to \ref{lem:Phi}.  
\end{remark}


\bigskip
\begin{proof}[Proof of Lemma \ref{l:Om:F0F}] To prove (\ref{surjection_equation}) it suffices to show that 
\[
\int_{-\infty}^\infty \Omega g(x) \overline{f(x)} \, dx 
= \int_{-\infty}^\infty \hat{g}(\xi) \overline{f^\sharp(\xi)} \, d\xi  \] 
for a dense set of $g \in L^2$ and a dense set of $f \in L^2$.  In dong so, we only {need} to show that 
\begin{equation} 
\int_{-\infty}^\infty \Omega g(x) \overline{P_{[\alpha,\beta]}f(x)} \, dx
= \int_{-\infty}^\infty \hat{g}(\xi) \overline{(P_{[\alpha,\beta]}f)^\sharp
(\xi)} \, d\xi
\label{surjection_equation2}
\end{equation}
for every $g \in C^\infty_0$, $f \in L^2$, $0 < \alpha < \beta$ with
$[\sqrt{\alpha},\sqrt{\beta}] \cap {\cal E}_0= \emptyset$. 

So fix $g$, $f$, $\alpha,
\beta$ in this way, and define $h = P_{[\alpha,\beta]}f$ (also see  Corollary \ref{Cor:property}). 
The hypotheses on $g$ and $h$ justify all the interchanges of integrals and limits that follow. 
By a simple calculation 
\[
\int_{-\infty}^\infty \Omega g(x) \overline{h(x)} \, dx
- \int_{-\infty}^\infty g(x) \overline{h(x)} \, dx
= -i \lim_{t \rightarrow \infty} \int_0^t \int_{-\infty}^\infty
(e^{-isH} V e^{isH_0}g)(x) \overline{h(x)} \, dx \, ds
\] 
where we use $ \frac{d}{dt} (e^{-i t H} e^{i t H_0} ) = -i  e^{-i t
H} V e^{i t H_0}$ and apply Fubini for every $t>0$.   

Thus, by \cite[XI.6, Lemma 5]{RS},  (the lemma on ``abelian limits'') 
\begin{equation}
\begin{aligned}
\mbox{}&\int_{-\infty}^\infty \Omega g(x) \overline{h(x)} \, dx \label{surjection_equation3}  \\ 
 =& \int_{-\infty}^\infty g(x) \overline{h(x)} \, dx
\ -\ i \lim_{\epsilon \rightarrow 0^+} \int_0^\infty 
\int_{-\infty}^\infty (e^{-isH} V e^{isH_0}g)(x) \overline{h(x)}
e^{-\epsilon s} \, dx \, ds.  
\end{aligned}    
\end{equation}

Now using Theorem \ref{th:Plancherel} and the spectral theorem it is easy to show that
if $k \in \sH_{ac}$, then 
$(e^{itH} k)^\sharp(\xi) = e^{it\xi^2} k^\sharp(\xi)$. Consequently 
\begin{eqnarray*}
\lefteqn{
\int_{-\infty}^\infty (e^{-isH} V e^{isH_0}g)(x) \overline{h(x)} \, dx } \\
&& \ \ \ \ = \int_{-\infty}^\infty e^{-is\xi^2} (V e^{isH_0}g)^\sharp(\xi) 
\overline{h^\sharp (\xi)} \, d\xi \\ 
&& \ \ \ \ = (2\pi)^{-1/2} \int_{-\infty}^\infty \int_{-\infty}^\infty 
e^{-is\xi^2} V(x) (e^{isH_0}g)(x) \overline{e(x,\xi)} \, \overline{h^\sharp
(\xi)} \, dx \, d\xi.
\end{eqnarray*} 
Thus  
\begin{eqnarray*}
\lefteqn{
\int_0^\infty \int_{-\infty}^\infty 
(e^{-isH} V e^{isH_0}g)(x) \overline{h(x)}
e^{-\epsilon s} \, dx \, ds } \\
&& = (2\pi)^{-1/2} \int_0^\infty \int_{-\infty}^\infty \int_{-\infty}^\infty
V(x) (e^{is(H_0 - \xi^2 + i\epsilon)} g)(x) 
\overline{e(x,\xi)} \, \overline{h^\sharp(\xi)} \, dx \, d\xi \, ds \\
&& = i (2\pi)^{-1/2} \int_{-\infty}^\infty \int_{-\infty}^\infty
V(x) ((H_0 - \xi^2 + i\epsilon)^{-1}g)(x) 
\overline{e(x,\xi)} \, \overline{h^\sharp(\xi)} \, dx \, d\xi.
\end{eqnarray*}

Using $\overline{G_0(x,y;z)} = G_0(x,y;\overline{z})$ we have
\begin{eqnarray*} 
\lefteqn{ 
(H_0 - \xi^2 + i\epsilon)^{-1}g(x)  } \\ 
&& \ \ \ \ \ \ \ \ \ \ =  
\int_{-\infty}^\infty G_0(x,y;\xi^2 - i\epsilon) g(y) \, dy \\
&& \ \ \ \ \ \ \ \ \ \ = 
(2i \overline{(\xi^2 + i\epsilon)^{1/2}})^{-1} 
\int_{-\infty}^\infty 
e^{-i \overline{(\xi^2 + i\epsilon)^{1/2}} \vert x-y \vert}
g(y) \, dy.
\end{eqnarray*}
Substituting all of this into (\ref{surjection_equation3}) we now have
\begin{align*}
\mbox{}&
\int_{-\infty}^\infty \Omega g(x) \overline{h(x)} \, dx = \int_{-\infty}^\infty g(x) \overline{h(x)} \, dx  \\  
+& (2\pi)^{-1/2} \int_{-\infty}^\infty \int_{-\infty}^\infty
\int_{-\infty}^\infty V(x) (2i\vert \xi \vert)^{-1} e^{-i \vert \xi \vert
\vert x-y \vert} g(y) \overline{e(x,\xi)} \, \overline{h^\sharp(\xi)}
\, dy \, dx \, d\xi \\ 
 =& \int_{-\infty}^\infty g(x) \overline{h(x)} \, dx
+ (2\pi)^{-1/2} \int_{-\infty}^\infty \int_{-\infty}^\infty 
g(y) (e^{-iy\xi} - \overline{e(y,\xi)}) \overline{h^\sharp(\xi)} \, dy \, d\xi \\
 =& \int_{-\infty}^\infty \hat{g}(x) \overline{h^\sharp(x)} \, dx\,.
\end{align*}
Here for the first equality we have applied  the dominated convergence theorem since $ e(x, \xi)$ is uniformly bounded for all $(x,\xi)\in \R\times [\sqrt{\alpha}, \sqrt{\beta}]$ and
$h^\sharp (\xi) = \chi_{\{\alpha \le \xi^2 \le \beta\}} f^\sharp \in L^2\cap L^1$  by Corollary \ref{Cor:property} ({\em b}). 
To arrive at the second to last line we used equation (\ref{LS_eq}).   
\end{proof}


\subsection{Integral representation for spectral operators}\label{ss:phi(H):kernel}  
 If $\vphi \in C_b=C_b^0(\R)$, 
 define a bounded operator on $L^2$ by (\ref{def:specOp}),
where $\cF$, $\cF^*$ and $\{e_k\}$ are the same as in Theorem \ref{th:Plancherel}.  
Then in light of Proposition \ref{c:wave:F*F0},  
it is easy to verify $\vphi(H)$ satisfies all the properties of the spectral theorem.  
Hence by uniqueness, our definition agrees with the definition of a spectral operator using functional calculus.  
We derive an integral kernel of  $\vphi(H)$.  
\begin{prop}\label{c:vphi(H):ker}    
Suppose  $V\in L^1\cap L^2$.  Let $\varphi : \R \rightarrow \C$ be continuous and supported in $ [\alpha,\beta]$, where $0 < \alpha < \beta$, $[\sqrt{\alpha},
\sqrt{\beta}] \cap {\cal E}_0 = \emptyset$. Then for all $f \in L^1 \cap L^2$
$$
\varphi(H)f(x) = \int_{-\infty}^\infty K(x,y) f(y) \, dy 
$$
where  
\[ K(x,y) = (2\pi)^{-1} \int_{-\infty}^\infty \varphi(\xi^2) e(x,\xi)
\overline{e(y,\xi)} \, d\xi\,. \]
\end{prop}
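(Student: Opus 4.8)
The plan is to start from the definition \eqref{def:specOp} of $\varphi(H)$. Since $\varphi$ is supported in $[\alpha,\beta]\subset(0,\infty)$ and $[\sqrt{\alpha},\sqrt{\beta}]\cap\mathcal{E}_0=\emptyset$, the point spectrum contributes nothing (the eigenvalues $\lambda_k$ are negative), so for $f\in L^1\cap L^2$ we have $\varphi(H)f=\mathcal{F}^*\big(\varphi(\xi^2)\mathcal{F}f\big)$. First I would record that $g:=\mathcal{F}f=f^\sharp$ lies in $L^2$ by Theorem \ref{th:Plancherel}(a), and that $\varphi(\xi^2)g(\xi)$ is supported in $\{\alpha\le\xi^2\le\beta\}$, hence by Theorem \ref{th:Plancherel}(b) (applied to the single interval, or via Corollary \ref{Cor:property}(b)(i)) we get
$$
\varphi(H)f(x)=(2\pi)^{-1/2}\int_{\alpha\le\xi^2\le\beta}\varphi(\xi^2)f^\sharp(\xi)\,e(x,\xi)\,d\xi
$$
in $L^2$.

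Next I would substitute the absolutely convergent expression $f^\sharp(\xi)=(2\pi)^{-1/2}\int_{-\infty}^\infty f(y)\overline{e(y,\xi)}\,dy$ (valid for $f\in L^1$ since $|e(y,\xi)|\le c_{\alpha,\beta}$ on $\R\times[\sqrt\alpha,\sqrt\beta]$ by Remark \ref{e(xxi):alpha-beta}, and $\varphi$ is compactly supported) and exchange the order of integration to obtain
$$
\varphi(H)f(x)=\int_{-\infty}^\infty\Big((2\pi)^{-1}\int_{-\infty}^\infty\varphi(\xi^2)e(x,\xi)\overline{e(y,\xi)}\,d\xi\Big)f(y)\,dy,
$$
which is precisely $\int K(x,y)f(y)\,dy$ with the stated kernel. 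The Fubini step is the one point requiring justification: the dominating function is
$$
(2\pi)^{-1}\,\chi_{\{\alpha\le\xi^2\le\beta\}}(\xi)\,\|\varphi\|_\infty\,c_{\alpha,\beta}^2\,|f(y)|,
$$
which is integrable in $(\xi,y)$ over $[\sqrt\alpha,\sqrt\beta]\times\R$ (indeed $[-\sqrt\beta,-\sqrt\alpha]\cup[\sqrt\alpha,\sqrt\beta]$) because $f\in L^1$ and the $\xi$-range is a bounded set; so Fubini--Tonelli applies. One should also note that the inner $\xi$-integral converges absolutely for each fixed $(x,y)$ by the same uniform bound, so $K(x,y)$ is well defined (in fact jointly continuous in $(x,y)$ by the continuity of $e$ from Remark \ref{e(xxi):alpha-beta}, though this is not needed for the statement).

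The main obstacle, such as it is, is bookkeeping rather than conceptual: one must be careful that the identity $\varphi(H)f=(2\pi)^{-1/2}\int_{\alpha\le\xi^2\le\beta}\varphi(\xi^2)f^\sharp e(x,\xi)\,d\xi$ holds as a genuine pointwise-a.e.\ and $L^2$ identity for $f\in L^1\cap L^2$ (not merely $f\in C_0^\infty$), which is exactly what part (b) of Theorem \ref{th:Plancherel} and its polarized form in the proof of Corollary \ref{Cor:property} deliver; and that passing $\varphi(\xi^2)$ inside is legitimate because $\varphi(\xi^2)f^\sharp(\xi)=\big(\varphi(H)f\big)^\sharp(\xi)$ up to the pointwise support truncation, again by the functional calculus together with \eqref{Eac:F*F}. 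Once these are in place the computation above closes the proof.
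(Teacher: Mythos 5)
Your proposal is correct and follows essentially the same route as the paper: both reduce to the identity $(\varphi(H)f)^\sharp(\xi)=\varphi(\xi^2)f^\sharp(\xi)$, hence to the representation $\varphi(H)f(x)=(2\pi)^{-1/2}\int_{\alpha\le\xi^2\le\beta}\varphi(\xi^2)f^\sharp(\xi)e(x,\xi)\,d\xi$, and then conclude by Fubini using the uniform bound $|e(x,\xi)|\le c_{\alpha,\beta}$ from Remark \ref{e(xxi):alpha-beta} together with $f\in L^1$. The only cosmetic difference is that you obtain the first identity directly from the definition \eqref{def:specOp} (the point spectrum dropping out by the support condition), whereas the paper derives it by approximating $\varphi$ uniformly on $[\alpha,\beta]$ by polynomials $p_n$ and invoking Corollary \ref{Cor:property}; since the paper has already identified \eqref{def:specOp} with the functional calculus, both justifications are legitimate.
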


\begin{proof} Let $p_1,p_2, {p_3},\dots$ be a sequence of complex polynomials such that
$p_n \rightarrow \varphi$ uniformly on $[\alpha,\beta]$. From the
spectral theorem we have 
$$
\lim_{n \rightarrow \infty} \Vert \varphi(H) f - p_n(H) P_{[\alpha,\beta]} f
\Vert_{L^2} = 0. 
$$
Hence  
$$
\lim_{n \rightarrow \infty} \int_{-\infty}^\infty
\vert (\varphi(H) f)^\sharp(\xi) -  (p_n(H) P_{[\alpha,\beta]} f)^\sharp
(\xi) \vert^2 \, d\xi = 0.
$$
According to Corollary \ref{Cor:property} this implies that  
$$
\lim_{n \rightarrow \infty} \int_{-\infty}^\infty
\vert (\varphi(H) f)^\sharp(\xi) - 
p_n(\xi^2) \chi_{\alpha < \xi^2 < \beta} f^\sharp(\xi) \vert^2 \, d\xi = 0.
$$
It follows that $(\varphi(H)f)^\sharp(\xi) = \varphi(\xi^2) f^\sharp(\xi)$ 
in the $L^2$ sense. 
Because of the support condition on $\varphi$ 
we now have 
$$
\varphi(H)f (x) = (2\pi)^{-1/2} \int_{-\infty}^\infty \varphi(\xi^2)
f^\sharp(\xi) e(x,\xi) \, d\xi.
$$ 
Again because of the condition that $[\sqrt{\alpha},\sqrt{\beta}] 
\cap {\cal E}_0 = \emptyset$, there exists some constant $c < \infty$
such that $\vert e(x,\xi) \vert < c$ for all $x \in \R$, $\alpha \leq \xi^2
\leq \beta$, see  Remark \ref{e(xxi):alpha-beta}. 
For $f \in L^1 \cap L^2$, the proof is finished with an application of Fubini's theorem. 
\end{proof}

\begin{remark}\label{rem:E(xxi)} 
  Owing to the fact that $\sigma_{ac}(H)=[0,\iy)$ and $\cE_0$ has zero measure with $\cE_0\cup \{0\}$ being closed, 
  we observe that  
one can allow $0\le \alpha < \beta $ be arbitrary. Indeed, a simple density argument  yields
\begin{equation}\label{ecFf:vphi}
\cF(\vphi(H) f) =  \vphi(\xi^2) \cF f\,,
\end{equation}
in view of 
\begin{align*}
&\varphi_{\cup_i [\al_i, \beta_i)}(H) f {\rightarrow} \vphi(H) f \;\;  \qquad \textrm{in} \;L^2\\  
& \varphi_{\cup_i [\al_i, \beta_i)}(\xi^2) \fsh {\rightarrow} \vphi(\xi^2) f^\sharp \ \qquad \textrm{in} \;L^2\,
\end{align*}
where one can always choose $\cup_i (\al_i, \beta_i)=(0,\iy)\setminus\cE_0^2$ such that each $[\al_i,\beta_i]$ is disjoint from  {$\cE_0^2=\{\xi^2:\xi\in\mathcal{E}_0\}$}. 
\end{remark}


\begin{prop}\label{p:phi(H)(xy)} Let $V\in L^1\cap L^2$ be real-valued. 
Let $\vphi$ be continuous and have compact support. Suppose $e(x,\xi)$ is uniformly bounded for a.e.
$(x,\xi)\in \R^2$. Then for all $f\in \sH_{ac}$\,,
\begin{align}\label{e:K(xy)}
\vphi(H)f(x) = \int K(x,y)f(y)dy
\end{align}
where $K(x,y)= \int \vphi(\xi^2) e(x,\xi) \overline{e(y,\xi)}d\xi $. 
The integral for the kernel $K$ converges for a.e. $x, y$ such that
$K(x,\cdot)$ is in $L^2\cap L^\iy$  for a.e. $x$.
\end{prop}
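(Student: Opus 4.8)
The plan is to deduce Proposition \ref{p:phi(H)(xy)} from the already-established Proposition \ref{c:vphi(H):ker} together with Remark \ref{rem:E(xxi)} and equation (\ref{ecFf:vphi}). First I would record that by (\ref{ecFf:vphi}), for every $f\in\sH_{ac}$ one has $\cF(\vphi(H)f)(\xi)=\vphi(\xi^2)f^\sharp(\xi)$ in $L^2$. Since $\vphi$ is continuous with compact support, say $\supp\vphi\subset[\alpha_0,\beta_0]$ with $0\le\alpha_0<\beta_0$, the function $\vphi(\xi^2)f^\sharp(\xi)$ is supported in $\{\alpha_0\le\xi^2\le\beta_0\}$ and lies in $L^1\cap L^2$ (it is an $L^2$ function with compact support). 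Applying the inversion part (b) of Theorem \ref{th:Plancherel}, or rather its $L^1$-valued strengthening via the absolute convergence in Lemma \ref{l:e(xxi):E0}, I would write
\[
\vphi(H)f(x)=(2\pi)^{-1/2}\int \vphi(\xi^2)f^\sharp(\xi)\,e(x,\xi)\,d\xi\,,
\]
valid in $L^2$ and, because the integrand is in $L^1_\xi$ with $e(x,\cdot)$ bounded on the compact $\xi$-support, for a.e.\ $x$.

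Next I would unfold $f^\sharp(\xi)=(2\pi)^{-1/2}\int f(y)\overline{e(y,\xi)}\,dy$ and justify interchanging the $y$- and $\xi$-integrations by Fubini. The integrand is $\vphi(\xi^2)\,e(x,\xi)\,\overline{e(y,\xi)}\,f(y)$; since $\vphi$ has compact support and $e$ is (by hypothesis) uniformly bounded a.e., the $\xi$-integral runs over a bounded set, $|e(x,\xi)\overline{e(y,\xi)}|\le c^2$ there, and $f\in L^1$, so the triple integral converges absolutely. Fubini then gives
\[
\vphi(H)f(x)=\int K(x,y)f(y)\,dy,\qquad K(x,y)=(2\pi)^{-1}\!\int \vphi(\xi^2)e(x,\xi)\overline{e(y,\xi)}\,d\xi,
\]
which is the desired representation. (Note the $(2\pi)^{-1}$ normalization absorbs the two factors of $(2\pi)^{-1/2}$; this matches the kernel in Proposition \ref{c:vphi(H):ker}.)

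Finally, for the mapping properties of $K$: fix $x$ outside the null set where $e(x,\cdot)$ fails to be bounded. Then $y\mapsto K(x,y)$ is, up to the bounded factor $(2\pi)^{-1}\vphi(\xi^2)e(x,\xi)$ integrated against $\overline{e(y,\xi)}$, nothing but $\overline{\cF^*\!\big(\vphi(\cdot^2)\,\overline{e(x,\cdot)}\,\chi_{[\sqrt{\alpha_0},\sqrt{\beta_0}]}\big)(y)}$; since $\vphi(\xi^2)\overline{e(x,\xi)}\in L^2_\xi$ with compact support, Theorem \ref{th:Plancherel}(c) (the Plancherel identity $\|\cF^*g\|_{L^2}=\|g\|_{L^2}$ on the range) shows $K(x,\cdot)\in L^2$ with $\|K(x,\cdot)\|_{L^2}\le c\,(2\pi)^{-1}\|\vphi\|_\infty(\beta_0-\alpha_0)^{1/2}$. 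For the $L^\infty$ bound, the uniform boundedness $|e(y,\xi)|\le c$ for a.e.\ $(y,\xi)$ together with the compact $\xi$-support gives directly $|K(x,y)|\le (2\pi)^{-1}c^2\|\vphi\|_\infty(\beta_0-\alpha_0)$ for a.e.\ $y$, so $K(x,\cdot)\in L^\infty$ as well.

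The main obstacle is not any single estimate but making the Fubini interchange and the ``a.e.\ $x$'' clauses genuinely rigorous: one must use the uniform boundedness hypothesis on $e$ precisely to reduce every integral to one over the bounded set $\{\alpha_0\le\xi^2\le\beta_0\}$, where boundedness of $e$ plus $f\in L^1\cap L^2$ makes everything absolutely convergent, and one must invoke Lemma \ref{l:e(xxi):E0} / Remark \ref{e(xxi):alpha-beta} to know the relevant $\xi$-section avoids $\cE_0$ (or note that $\cE_0^2$ has measure zero so contributes nothing to the $\xi$-integral). Everything else is a routine application of the $L^2$ theory already developed in Theorem \ref{th:Plancherel} and Propositions \ref{Th:F-surj} and \ref{c:vphi(H):ker}.
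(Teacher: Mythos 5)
Your route coincides with the paper's: both proofs start from \eqref{ecFf:vphi} to write $\vphi(H)f(x)=(2\pi)^{-1/2}\int\vphi(\xi^2)f^\sharp(\xi)e(x,\xi)\,d\xi$ (the integrand being in $L^1_\xi\cap L^2_\xi$ by compact support), then unfold $f^\sharp$ and interchange the $y$- and $\xi$-integrations. There is, however, one genuine gap in what you wrote: your Fubini step is justified by the clause ``$f\in L^1$'', but the proposition is asserted for \emph{all} $f\in \sH_{ac}$, which is a subspace of $L^2$ and contains functions that are not integrable. For such $f$ the identity $f^\sharp(\xi)=(2\pi)^{-1/2}\int f(y)\overline{e(y,\xi)}\,dy$ is not an absolutely convergent integral --- by \eqref{cFf:L2:V} it is only an $L^2$-limit of truncations --- so both the unfolding and your absolute-convergence bound on the triple integral break down. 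The paper closes this by a two-step argument: first establish \eqref{e:K(xy)} for $f\in C^\infty_0$ (where your computation is valid verbatim, since then $f\in L^1$ and $f^\sharp\in L^1\cap L^\infty$ on the support of $\vphi(\xi^2)$), and then pass to general $f\in L^2$ by taking $f_m\in C^\infty_0$ with $f_m\to f$ in $L^2$ and $\vphi(H)f_m\to\vphi(H)f$ a.e., using $K(x,\cdot)\in L^2$ to get $\int K(x,y)f_m(y)\,dy\to\int K(x,y)f(y)\,dy$. Since you have already proved $K(x,\cdot)\in L^2\cap L^\infty$, this limiting step is fully available to you; it simply needs to be written down rather than left implicit.

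On the kernel bounds themselves your argument is, if anything, more self-contained than the paper's: you deduce $K(x,\cdot)\in L^2$ from the isometry $\Vert\cF^*g\Vert_{L^2}=\Vert g\Vert_{L^2}$ (a consequence of $\cF\cF^*=I$) applied to $g(\xi)=\vphi(\xi^2)\overline{e(x,\xi)}$, which is bounded with compact support, and the $L^\infty$ bound from the uniform bound on $e$ together with the compact $\xi$-support. The paper instead appeals to the general Carleman-kernel result \cite[Corollary A.1.2]{Sim82}. Either justification is acceptable; once the density step above is added, your proof is complete.
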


\begin{proof} Suppose $f\in C_0^\infty$ first, which implies $f^\sharp ={\cal F}f \in L^2\cap L^\infty$. According to (\ref{ecFf:vphi}),
\begin{align*} 
\vphi(H)f(x)\stackrel{L^2}{=}& \slim \frac{1}{\sqrt{2\pi}}\int^N_{-N} \vphi(\xi^2)
f^\sharp (\xi) e(x,\xi)d\xi\\ 
=&\frac{1}{\sqrt{2\pi}}\int \underbrace{\vphi(\xi^2)
f^\sharp (\xi)}_{L^2\cap L^1} e(x,\xi)d\xi\qquad (pointwise)\\ 
\end{align*}  

Applying Fubini theorem we have 
\[ \vphi(H)f(x)=\frac{1}{\sqrt{2\pi}}\int f(y)dy \int \vphi(\xi^2) e(x,\xi) \overline{e(y,\xi)} d\xi,
 \]
where the integral is convergent for a.e. $x$. 

Now let $f\in L^2$. Pick a sequence $\{f_m \}\subset C_0^\infty$
such that $f_m \rightarrow f$ in $L^2$ and $ \vphi(H)f_m
\rightarrow\vphi(H)f$\quad $a.e$ and in $L^2$.  
Thus, since for almost all $x$, $K(x,\cdot) $ is in $L^2$,
\begin{align*}
\vphi(H)f(x)=&\lim_{m\to \iy} \vphi(H)f_m(x)\quad a.e.\\
 =&\lim_{m\to \iy} \int \vphi(H)(x,y) f_m dy = \int \vphi(H)(x,y) f dy. 
\end{align*} 
The fact that  
$K(x,y):=\vphi(H)(x,\cdot) \in L^2\cap L^\infty$ 
is a general property for Carleman type kernels \cite[Corollary A.1.2]{Sim82}, which can also be easily verified
using the heat kernel estimate \cite{OZh08,Zheng2006}.  
 \end{proof}


\begin{remark}\label{rem:V:exp-dec} If  $Ve^{\al |x|}\in L^1(\R)$, $\al>0$, then $e(x,\xi)$ are uniformly bounded and
the set  $\mathcal{E}_0$ is empty  
by applying  the analytic Fredholm theorem in the region $\Im z> \al/2 $ in the proof of Lemma \ref{l:e(xxi):E0},  
cf. \cite{Sim71} also. 
\end{remark}

\begin{remark}\label{rem:kernel:V1d} 
  Under the weaker condition $\sup_x\int_{|x-y|\le 1} |V(y)|dy< \iy$, there is a similar result  as stated in \cite[Theorem C.5.4]{Sim82}. 
  However, the spectral measure involved was not explicitly given there.  
 In our setting, one can derive the spectral measure from (\ref{dE:density-meausre}), 
{which shows the density of $dE_\lam$ for almost all $\lam$. 
This is comparable to (\ref{Hv:spectra-measure}) in the case of $V$ being a short-range finite measure}. 
\end{remark}

\section{Wave operators and scattering of $e^{-itH}$}\label{s:waveOp:scatter} 
In this section we  prove a scattering result for $U(t)=e^{-itH}$ under the condition that $V$ is real-valued satisfying either   
 \begin{enumerate}
\item[(i)]  $\cM_2$, i.e., a finite  measure verifying (\ref{eV:wei2}),  (see Theorem \ref{t:wave:scatter});  or,  
 \item[(ii)]  $L^1\cap L^2$,  (see Theorem \ref{th:Omega}).
\end{enumerate}
Let $\vphi(H)=\int \vphi(\lam) dE_\lam$ be defined by means of the spectral resolutions. 
The following  intertwining property for $\vphi(H)$ and $\vphi(H_0)$ are shown 
in \cite[sections {VIII and} IX]
{GH98} via the perturbed Fourier transforms.
\begin{prop}\label{p:phi(H):H0}  Let  $H=H_0+V$ be defined as a form operator as in \S \ref{s:preliminV}. 
Denote by  $\Fo_0f$   
  the usual Fourier transform \eqref{F0:fourier}
and   $ \Fo_\pm, \Fo^*_\pm$  the perturbed Fourier transforms and their adjoints (\ref{Fo+:f})-(\ref{eFo*-g}).  
If $\vphi$ is any bounded measurable function on $\R$, then we have
\begin{align}\label{Fstar:vphi}
\vphi(H)  \Fo^*_\pm \Fo_0= \Fo^*_\pm  \Fo_0\vphi(H_0)
\end{align}
or, equivalently, 
\begin{align}\label{eW:intertwine}
&\vphi(H) P_{ac} =W_\mp \vphi(H_0)W_\mp^*\,.
\end{align}
\end{prop}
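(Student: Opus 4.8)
The plan is to derive both identities algebraically from the structural facts already assembled in Sections \ref{s:preliminV}--\ref{s:perturb:F}: the inversion relations $\Fo^*_\pm\Fo_\pm = P_{ac}$ and $\Fo_\pm\Fo^*_\pm = I$ of Proposition \ref{p:Fo:L2}; the wave-operator factorization $W_\mp = \Fo^*_\pm\Fo_0$ of Theorem \ref{Fo:waV}; the spectral-operator formula $\vphi(H)f = \Fo^*_\pm\,\vphi(k^2)\,\Fo_\pm f$ valid for $f\in\sH_{ac}$, see \eqref{spec:F*Fo}; and the elementary diagonalization of $H_0$ by the ordinary Fourier transform, namely $\Fo_0\,\vphi(H_0) = M_\vphi\,\Fo_0$, where $M_\vphi$ denotes multiplication by $\vphi(\xi^2)$.

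First I would prove \eqref{Fstar:vphi}. Because $\ran\Fo^*_\pm = \sH_{ac}$, the operator $\Fo^*_\pm\Fo_0$ maps $L^2$ into $\sH_{ac}$, so \eqref{spec:F*Fo} may be applied to the vector $\Fo^*_\pm\Fo_0 f$ for any $f\in L^2$; this gives
\[
\vphi(H)\,\Fo^*_\pm\Fo_0 f = \Fo^*_\pm\,\vphi(k^2)\,\Fo_\pm\Fo^*_\pm\Fo_0 f = \Fo^*_\pm\,\vphi(k^2)\,\Fo_0 f = \Fo^*_\pm\,\Fo_0\,\vphi(H_0) f,
\]
where $\Fo_\pm\Fo^*_\pm = I$ is used in the middle step and $M_\vphi\Fo_0 = \Fo_0\vphi(H_0)$ in the last. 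This is exactly \eqref{Fstar:vphi}.

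Next I would check that \eqref{Fstar:vphi} and \eqref{eW:intertwine} are equivalent. From $W_\mp = \Fo^*_\pm\Fo_0$ and $\Fo_0\Fo_0^* = I$ one has $W_\mp^* = \Fo_0^*\Fo_\pm$, hence
\[
W_\mp\,\vphi(H_0)\,W_\mp^* = \Fo^*_\pm\Fo_0\,\vphi(H_0)\,\Fo_0^*\Fo_\pm = \Fo^*_\pm\,M_\vphi\,\Fo_0\Fo_0^*\,\Fo_\pm = \Fo^*_\pm\,M_\vphi\,\Fo_\pm = \vphi(H)P_{ac},
\]
the last equality being \eqref{spec:F*Fo} together with $\Fo_\pm = \Fo_\pm P_{ac}$ (valid because $\Fo_\pm$ kills $\sH_{ac}^\perp$: if $g\perp\sH_{ac}$ then $\|\Fo_\pm g\|_2^2 = (\Fo^*_\pm\Fo_\pm g,g) = (P_{ac}g,g) = 0$). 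Conversely, right-multiplying \eqref{eW:intertwine} by $W_\mp$, using $W_\mp^*W_\mp = I$ and $P_{ac}W_\mp = W_\mp$, and then substituting $W_\mp = \Fo^*_\pm\Fo_0$ recovers \eqref{Fstar:vphi}. The $\mp$ versus $\pm$ subscript bookkeeping in \eqref{eW:intertwine} is handled by the same computation, so only one sign need be written out.

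There is no substantial obstacle here: the argument is entirely formal once Proposition \ref{p:Fo:L2} and Theorem \ref{Fo:waV} are in hand. The only points requiring a little care are domain bookkeeping --- applying \eqref{spec:F*Fo} only to vectors in $\sH_{ac}$, which is guaranteed by $\ran\Fo^*_\pm = \sH_{ac}$ --- and the observation that $\vphi$ being merely bounded Borel is harmless, since then all operators appearing are bounded on $L^2$ and \eqref{spec:F*Fo} holds for every such $\vphi$ by the uniqueness clause of the spectral theorem \cite[Theorem VIII.5]{RS}.
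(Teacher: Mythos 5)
Your operator algebra is correct, but the argument has a circularity problem at its core. The key input you use is \eqref{spec:F*Fo}, read as the statement that the Borel functional calculus $\vphi(H)$ satisfies $\vphi(H)f=\Fo_\pm^*\vphi(k^2)\Fo_\pm f$ on $\sH_{ac}$. In the paper that formula is introduced as a \emph{definition} of a candidate spectral operator, followed by an unproved assertion that it coincides with the genuine functional calculus; and Remark \ref{spectra-resolution:W} states explicitly that \eqref{eW:intertwine} is \emph{equivalent} to \eqref{spec:F*Fo}. So when you substitute \eqref{spec:F*Fo} into $\vphi(H)\,\Fo^*_\pm\Fo_0 f$ and simplify with $\Fo_\pm\Fo^*_\pm=I$, you are in effect assuming the conclusion. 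To invoke the uniqueness clause of \cite[Theorem VIII.5]{RS} one must first verify that the map $\vphi\mapsto\Fo_\pm^*\vphi(k^2)\Fo_\pm$ reproduces the resolvent (or polynomial) calculus of $H$ on $\sH_{ac}$ --- and that verification is precisely the content of Proposition \ref{p:phi(H):H0}.

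The paper closes this gap by working from the generalized eigenfunctions rather than from \eqref{spec:F*Fo}: since $\psi^\pm_k$ solves $\la(-\De+V)\psi^\pm_k,\phi\ra=k^2\la\psi^\pm_k,\phi\ra$ for all test functions $\phi$, an application of Fubini to the definitions \eqref{Fo+:f}--\eqref{eFo*-g} gives $(H-\zeta)\Fo^*_\pm\Fo_0 f=\Fo^*_\pm\Fo_0(H_0-\zeta)f$ for $f\in\cD(H_0)=W^{2,2}$ and nonreal $\zeta$; this yields the resolvent intertwining $R(\zeta)\Fo^*_\pm\Fo_0=\Fo^*_\pm\Fo_0 R_0(\zeta)$ on all of $L^2$, and Stone's formula \eqref{Eab:R(zeta)} then upgrades it to spectral projections and hence to arbitrary bounded Borel $\vphi$, after which \eqref{eWav:pm} and \eqref{ac:WWstar} convert \eqref{Fstar:vphi} into \eqref{eW:intertwine}. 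The second half of your write-up --- the equivalence of \eqref{Fstar:vphi} and \eqref{eW:intertwine} via $W_\mp=\Fo^*_\pm\Fo_0$, $W_\mp^*W_\mp=I$, $P_{ac}W_\mp=W_\mp$ and $\Fo_\pm=\Fo_\pm P_{ac}$ --- is sound and matches what the paper does implicitly; it is only the derivation of \eqref{Fstar:vphi} itself that needs to be replaced by an argument, such as the Fubini--Stone route above, that does not presuppose \eqref{spec:F*Fo}.
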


\begin{proof} 
For $u=\psi^\pm_k$, $\lam=k^2>0$,  using
$\la (-\De+V)u, \phi\ra=\lam\la  u,  \phi\ra$ for all $\phi\in \cD$,  
Fubini theorem and  definitions of $\Fo^*_\mp$, we obtain for $\zeta\in \C\setminus (-\iy,\iy)$ and any $f\in \cD(H_0)=W^{2,2}$
\begin{equation}\label{Psi(k):cE0}
\begin{aligned}
&(H-\zeta)   \Fo^*_\pm \Fo_0 f =  \Fo^*_\pm \Fo_0 (H_0-\zeta) f\,\notag\\
=& \frac1{\sqrt{2\pi}}\int (k^2-\zeta)\psi^\pm (x,k)\hat{f}(k)dk\,, 
\end{aligned}
\end{equation} which implies that
\begin{align}
&(H-\zeta)\inv    \Fo^*_\pm \Fo_0 g =  \Fo^*_\pm \Fo_0 (H_0-\zeta)\inv g\,,\quad \forall g\in L^2\,.   \label{R(z):FoF0:g}
\end{align} 
Then, Stone's formula \eqref{Eab:R(zeta)} gives 
\begin{align*}
&\phi(H)  \Fo^*_\pm \Fo_0 g =  \Fo^*_\pm \Fo_0 \phi(H_0)g\,,
\end{align*} 
which amounts to (\ref{eW:intertwine}) in view of  \eqref{eWav:pm} and \eqref{ac:WWstar}. 
\end{proof} 

\begin{remark}\label{spectra-resolution:W}  By virtue of Theorem \ref{Fo:waV}, it is easy to note that 
equation (\ref{eW:intertwine}) is equivalent {to} (\ref{spec:F*Fo}). 
\end{remark}


We are ready to prove a long time representation of $U(t)$ in terms of $W_\pm$ and $U_0(t)$. 
\begin{theorem}\label{t:wave:scatter} Under the assumption  (\ref{eV:wei2}) on $V$,  the solution to (\ref{eU:H_V}) scatters:  
\begin{enumerate}
\item[(a)] If $\phi\in \sH_{ac}\cap L^1_1$, then there exists $\eta_\mp\in \sH_{ac}$ such that 
\begin{align}\label{U(t):V-W} 
&U(t)\phi = W_\mp U_0(t)\phi  +U(t)\eta_\mp\quad 
\end{align} 
where $\eta_\mp$ satisfy 
\begin{align*}
&\left\vert\Fo_\pm (\eta_\mp)(k)\right\vert 
\le \frac{C } {1+|k|}\norm{V}_{L^1_2} \norm{\phi}_{L^1_1}\,,
\end{align*} 
 where $\norm{V}_{L^1_2}:=\int (1+|x|)^2 |V|(dx)$ and $C$ is  some absolute positive constant. 
\item[(b)] For any $\phi\in \sH_{ac}$, there exist $\phi_\pm \in \sH=L^2$ such that 
\begin{align}\label{scattering:U0(t)} 
 &U(t)\phi =  U_0(t) {\phi_\pm}  +o(1)\quad in \  L^2  \quad as \ t\to \pm\iy \,.
\end{align}
Moreover,
\begin{align}\label{U0:scatter:phi+} 
 &U(t)\phi (x)= \frac{1}{\sqrt{2it}} e^{i \frac{x^2}{4t}} \widehat{\phi_\pm}(\pm\frac{x}{2|t|})  +o(1)\quad in \  L^2  \quad as \ t\to \pm\iy \,,
\end{align} 
\end{enumerate}  
where $\hat{f}=\Fo_0(f)$ is the usual Fourier transform (\ref{F0:fourier}).
\end{theorem}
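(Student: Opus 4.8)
\textbf{Proof plan for Theorem \ref{t:wave:scatter}.}

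The plan is to build everything on the wave-operator factorization $W_\mp=\Fo_\pm^*\Fo_0$ from Theorem \ref{Fo:waV} together with the intertwining identity \eqref{eW:intertwine}, which gives $e^{-itH}P_{ac}=W_\mp\,e^{-itH_0}\,W_\mp^*$ and hence $e^{-itH}W_\mp=W_\mp\,e^{-itH_0}$. For part (a), I would start from $\phi\in\sH_{ac}$ and write $U(t)\phi=W_\mp U_0(t)W_\mp^*\phi$; then set $\eta_\mp:=\phi-W_\mp^*\phi$ rewritten appropriately, so that $U(t)\phi=W_\mp U_0(t)\phi+W_\mp U_0(t)(W_\mp^*\phi-\phi)$, and absorb the last term as $U(t)\eta_\mp$ using $W_\mp U_0(t)=U(t)W_\mp$ on $\ran W_\mp=\sH_{ac}$. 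Thus $\eta_\mp=W_\mp(W_\mp^*\phi-\phi)$ lies in $\sH_{ac}$, and $\Fo_\pm(\eta_\mp)=\Fo_0 W_\mp^*\eta_\mp=\Fo_0(W_\mp^*\phi-\phi)$ since $\Fo_\pm W_\mp=\Fo_0$. The quantitative bound then reduces to estimating $\Fo_0(W_\mp^*\phi-\phi)(k)=\widehat{\phi}(k)-\Fo_\pm\phi(k)$; using the Lippmann--Schwinger representation $\psi^\pm(x,k)=e^{\pm ikx}+(2i|k|)^{-1}\int e^{i|k||x-y|}V(dy)\psi^\pm(y,k)$ one gets
\[
\widehat{\phi}(k)-\Fo_\pm\phi(k)=\frac{1}{\sqrt{2\pi}}\int\phi(x)\overline{\bigl(e^{\pm ikx}-\psi^\pm(x,k)\bigr)}\,dx,
\]
and the inner factor is bounded, via Proposition \ref{tr:vphi_z}(d) and Remark \ref{error:GH}, by $C(1+|k|)^{-1}\norm{V}_{L^1_2}(1+|x|)$, so pairing against $\phi\in L^1_1$ yields the stated constant. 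The main obstacle in (a) is controlling the iterated Lippmann--Schwinger kernel uniformly in $k$ near $k=0$ and at $|x|\to\infty$ simultaneously; this is exactly what the corrected bound $|\vphi_k^\pm(x)|\le C\min\{1+|x|,1/|k|\}$ of Remark \ref{error:GH} is designed to supply, and one must be careful that the $(1+|x|)$ growth is integrated against the $L^1_1$ weight, not $L^1$.

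For part (b), I would fix $\phi\in\sH_{ac}$ and put $\phi_\mp:=W_\mp^*\phi\in L^2$ (using $\Omega_\mp^*=W_\mp^*$ with $\Omega_\mp=W_\mp$ in the present normalization). Then
\[
\norm{U(t)\phi-U_0(t)\phi_\mp}_{L^2}=\norm{W_\mp U_0(t)W_\mp^*\phi-U_0(t)W_\mp^*\phi}_{L^2}=\norm{(W_\mp-I)U_0(t)\phi_\mp}_{L^2},
\]
and this tends to $0$ as $t\to\pm\infty$ by the definition of the wave operator: $W_\mp=\slim_{t\to\pm\infty}e^{itH}e^{-itH_0}$ means $\lim_t\norm{e^{-itH}e^{itH_0}\psi-W_\mp\psi}=0$, which after composing with unitaries is precisely $\lim_t\norm{(W_\mp-I)U_0(t)\psi}=0$ for every $\psi\in L^2$; apply with $\psi=\phi_\mp$. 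This gives \eqref{scattering:U0(t)}. The formula \eqref{U0:scatter:phi+} is then the classical dispersive asymptotics of the free propagator: for $g\in L^2$ one has $U_0(t)g(x)=\frac{1}{\sqrt{2it}}e^{ix^2/4t}\widehat{g}\bigl(\tfrac{x}{2t}\bigr)+o(1)$ in $L^2$ as $t\to+\infty$ (and the $t\to-\infty$ version with $|t|$ and the sign), which I would quote as the standard stationary-phase / Fourier-multiplier computation, applied with $g=\phi_\mp$. Combining with \eqref{scattering:U0(t)} and $\widehat{\phi_\mp}=\Fo_0 W_\mp^*\phi$ finishes the proof.

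The only genuinely delicate point is the uniform-in-$k$ estimate entering (a); parts (b) and \eqref{U0:scatter:phi+} are soft consequences of the existence and completeness of $W_\pm$ (Theorem \ref{Fo:waV}) plus the textbook free-dispersion asymptotics, so I would keep those arguments brief and concentrate the exposition on the Lippmann--Schwinger bound.
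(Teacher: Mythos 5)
Your proposal follows essentially the same route as the paper's proof: in (a) you use the factorization $W_\mp=\Fo_\pm^*\Fo_0$ and intertwining to identify $\eta_\mp=\phi-W_\mp\phi$, so that $\Fo_\pm\eta_\mp=(\Fo_\pm-\Fo_0)\phi$ is estimated through the Lippmann--Schwinger equation and the bound $|\psi^\pm(y,k)|\lesssim 1+|y|$, and in (b) you take $\phi_\pm=W_\pm^*\phi$ and combine completeness of the wave operators with the Fraunhofer asymptotics of $U_0(t)$, exactly as the paper does. The only blemishes are bookkeeping ones: your displayed difference should read $\Fo_\pm\phi(k)-\widehat{\phi}(k)$ rather than $\widehat{\phi}(k)-\Fo_\pm\phi(k)$ (harmless for the absolute-value bound), and your pairing of $W_\mp$ with the limit $t\to\pm\infty$ should be checked against the paper's convention $W_\pm=\slim_{t\to\pm\infty}e^{itH}e^{-itH_0}$.
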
  

\begin{proof}[Proof of Theorem \ref{t:wave:scatter}]  
(a) Let $\phi \in \sH_{ac}$. 
From (\ref{spec:F*Fo}) and Theorem \ref{Fo:waV} 
we have 
\begin{align*}
&  U(t) \phi = \Fo_\pm^* e^{ -i t k^2} \Fo_\pm \phi\\
& W_\mp U_0(t)\phi= \Fo_\pm^* \Fo_0 \Fo_0^* e^{-i t k^2} \Fo_0 \phi= \Fo_\pm^*  e^{-i t k^2} \Fo_0 \phi \\
& U(t) \phi- W_\mp U_0(t)\phi =\Fo_\pm^*  e^{-i t k^2} \left( \Fo_\pm -\Fo_0 \right)\phi\,.
\end{align*}  
Since $\Fo_\pm$ is surjective $\sH_{sc}\to L^2$, there exists $\eta_\mp$ in $\sH_{sc}$ such that $\Fo_\pm (\eta_\mp)=( \Fo_\pm -\Fo_0)\phi$. 
Hence, 
\begin{align*}
& U(t) \phi- W_\mp U_0(t)\phi =\Fo_\pm^*  e^{-i t k^2}\Fo_\pm (\eta_\mp)=U(t) \eta_\mp \,.
\end{align*}  
Here we can estimate $\Fo_+ (\eta_-)$ from (\ref{Fo+:f}) and (\ref{psiV:L-S})
  \begin{align*}
&\left\vert\Fo_+ (\eta_-)(k)\right\vert= \left\vert\frac{i}{2\sqrt{2\pi} |k|} \int  \overline{\psi^+(y,k)}  V(dy) \int \phi(x) e^{- i |k| | x-y|}dx\right\vert \\
\le& \frac{C}{1+|k|}\norm{V}_{L^1_2} \norm{\phi}_{L^1_1}\,.
\end{align*} 
Similarly, we have 
 \begin{align*}
&\left\vert\Fo_- (\eta_+)(k)\right\vert 
\le \frac{C}{1+|k|}\norm{V}_{L^1_2} \norm{\phi}_{L^1_1}\,.
\end{align*} 
On the other hand, it is easy to check the proof of \cite[Theorem 5.1]{GH98}  to derive that $|{\psi^\pm(y,k)}|\le C(1+|y|) e^{ c_0\norm{V}_{L^1_1}}$, see also \cite{Zheng2010i} for the case $V$ being a  function in $L^1_2$. 
Therefore, we have established part (a).  

\bigskip 
(b)  
Given $\phi\in \sH_{ac}$, we first show: There exists $\phi_+$ in $L^2$ such that (\ref{scattering:U0(t)}) holds. 
In fact,  since $W_+$ has range $\sH_{ac}$, 
we can take ${\phi}_+\in L^2$ such that  
\begin{align}\label{W:phi+}
W_{+} ({\phi}_+) =\phi\iff W_{+}^* ({\phi}) =\phi_+\,. 
\end{align}
Then,  from the existence of $W_+$ in Theorem \ref{eWav:pm} we have  as  $ t\to +\iy$
\begin{align*}
& e^{it H} e^{-it H_0} \phi_+  -W_+ \phi_+=o(1) \quad in \ L^2\,,   \end{align*} 
{that is}, 
$ e^{-it H_0}  \phi_+ -e^{-it H} W_+ \phi_+=o(1)$. 
Now, we deduce from the identity (\ref{W:phi+}) 
\begin{align}
 &e^{-it H} \phi=e^{-it H_0} W^*_+\phi+o(1)\notag\\
& e^{-it H}\phi =e^{-it H_0} {\phi_+} +o(1)  \quad \text{ in $L^2$ as  $t\to +\iy$.}  
\end{align} 
Similarly, we can show that there exists $\phi_-$ in $L^2$
\begin{align*}
 e^{-it H}\phi =e^{-it H_0} {\phi_-} +o(1)  \quad \text{in $L^2$ as  $t\to -\iy$}   
\end{align*}
where $W_-\phi_-=\phi\iff  \phi_-=W_-^* \phi$.  
This proves (\ref{scattering:U0(t)}) via the intertwining property $U(t)W_\pm= W_\pm U_0(t)$. 

To conclude the proof of the theorem, we further prove (\ref{U0:scatter:phi+}). 
 But this follows from  the asymptotics for the free waves  (Fraunhofer formula) for all $f\in L^2$
\begin{align*}
\lim_{t\to \pm\iy} e^{-it H_0}f(x) = \frac1{\sqrt{2it } } e^{ i \frac{x^2}{4t}} \hat{f}(\pm\frac{x}{2|t|})   
   \qquad \text{in $L^2$}\,,
\end{align*} 
where the function $\sqrt{z}$ is assumed on the branch so that $\Re\sqrt{z}\ge 0$, 
or equivalently, $\sqrt{z}$ is defined analytically in $\arg z\in (-\frac{\pi}{2}, \frac{\pi}{2})$,
see e.g. \cite[Theorem IX.31]{RS}. 
\end{proof} 

\begin{remark} An alternative formula of (\ref{U(t):V-W}) is true: 
\begin{align}\label{eU(t):W} 
& U(t) \phi = W_\mp U_0(t) {\phi_\mp}  \qquad \text{in $ L^2$ for all $t$}.
\end{align} 
Indeed, applying the intertwining equation (\ref{eW:intertwine}) for $U(t)P_{ac}=W_\mp U_0(t)W_\mp^*\,$, 
we calculate: For all $\phi\in \sH_{ac}\,$,
\begin{align*}
&U(t)\phi= W_\mp U_0(t) \phi_\mp \qquad \text{ in  $L^2$},
\end{align*} 
where $W_\mp^*\phi= {\phi_\mp}\in L^2$.  
\end{remark} 

\begin{remark} 
If $V$ is a real function in $L^1_2$, then  the  dispersive 
estimates hold for all $\eta\in \sH_{ac}$  \cite{GSch04} 
\begin{equation}\label{eU(t):time-decay}
 \norm{U(t)\eta}_{L^p(\R)} \lesssim 
 \frac{C}{|t|^{\frac12-\frac1p } }\norm{\eta}_{L^{p'}(\R)} \ \qquad p'\in [1,2]   
\end{equation} 
where 
$p$ and $p'$ are H\"older conjugate exponents.  
This suggests that, in view of part (a) in Theorem \ref{t:wave:scatter}, 
  there holds the approximation  as $t\to \pm\iy$,
 \begin{align}\label{eU(t):WU0} 
&U(t)\phi=e^{-it H} \phi \approx W_\mp U_0(t)\phi  \qquad \text{in $L^p$ for $p>2$}.  
\end{align} 
Regarding recent progress on  dispersive estimates we also refer to  \cite{EgoKopyMaTe16} 
for $V\in L_1^1$ on the line as well as   \cite{ErGoGr21,Gold12m} for measure-valued potentials in higher dimensions.   
The dispersive estimate (\ref{eU(t):time-decay}) should be valid as well with $V$ being a finite measure (\ref{eV:wei2}). 
\end{remark}

We have seen that the proofs of the scattering for $H=H_V$ in Theorem \ref{t:wave:scatter} as well as the $L^2$ theory with eigenfunction expansions 
are built on the construction of spectral measure and the asymptotic completeness of wave operators. 
The scattering theorem continues to hold true as soon as (\ref{eWav:pm}) is valid. 







\begin{lemma}\label{l:exist:ac}  Let $V\in L^1\cap L^2$ be real-valued. Then 
$(H-z)^{-1}-(H_0-z)^{-1} $ is trace class for some nonreal $z$. 
Hence $\Om_\pm$ exist in \eqref{e:Omega-f} and are asymptotic complete. 
The continuous part of $H$ and $H_0$ are unitarily equivalent. 
\end{lemma}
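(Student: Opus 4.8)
The plan is to verify the trace-class property of the resolvent difference and then invoke the Kato--Rosenblum theorem, together with a standard argument to upgrade existence of the wave operators to asymptotic completeness. First I would fix a nonreal $z$, say $z = i$, and write the second resolvent identity in the symmetrized form already exploited in the proof of Lemma \ref{L:R(z)}, namely
\[
R(z) - R_0(z) = -R_0(z) V^{1/2} \bigl(I + |V|^{1/2} R_0(z) V^{1/2}\bigr)^{-1} |V|^{1/2} R_0(z).
\]
The middle factor is bounded by Lemma \ref{L:inverse}. The key point is that because $V \in L^1 \cap L^2$, each of the outer factors $|V|^{1/2} R_0(z)$ and $R_0(z) V^{1/2}$ is Hilbert--Schmidt: indeed their integral kernels are $-(2iz^{1/2})^{-1} |V(x)|^{1/2} e^{iz^{1/2}|x-y|}$ (respectively with $V^{1/2}$), whose modulus squared is dominated by $c\,|V(x)| e^{-2\epsilon|x-y|}$ with $\epsilon = \Im z^{1/2} > 0$, and $\int_{\R^2} |V(x)| e^{-2\epsilon|x-y|}\,dx\,dy = \|V\|_{L^1} \cdot \epsilon^{-1} < \infty$ since $V \in L^1$. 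Writing $R(z) - R_0(z)$ as a product of two Hilbert--Schmidt operators sandwiching a bounded operator exhibits it as trace class.

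Next, from the trace-class (hence compact) resolvent difference, Weyl's theorem on the invariance of the essential spectrum gives $\sigma_{ess}(H) = \sigma_{ess}(H_0) = [0,\infty)$, consistent with Remark \ref{rem:L1V:HS}. The Kato--Rosenblum theorem (see \cite[Theorem XI.8]{RS}) then yields directly that the wave operators $\Omega_\pm$ of \eqref{e:Omega-f} exist and are complete, i.e.\ $\ran \Omega_+ = \ran \Omega_- = \sH_{ac}(H)$, and that $\Omega_\pm$ are partial isometries intertwining $H_0$ and $H$; consequently the absolutely continuous parts $H|_{\sH_{ac}}$ and $H_0$ are unitarily equivalent via $\Omega_\pm$. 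In particular, since $\sigma_{ac}(H_0) = [0,\infty)$ has no eigenvalues and the complement of $[0,\infty)$ can contain only isolated eigenvalues of finite multiplicity (again by the compact resolvent difference and the fact that $H$ is bounded below because $V \in L^2 + L^\infty$ makes $V$ infinitesimally $H_0$-bounded), one recovers the spectral picture $\sigma(H) = [0,\infty) \cup \sigma_{pp}(H)$ with $\sigma_{sc}(H) = \emptyset$ that was quoted at the start of Section \ref{s:L2}.

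The main obstacle is purely bookkeeping: making sure that the factorization of $R(z) - R_0(z)$ is legitimately a product of two genuine Hilbert--Schmidt operators rather than, say, a Hilbert--Schmidt times a merely bounded operator, so that the trace-class conclusion (not just Hilbert--Schmidt) is valid. This is handled by keeping the $V^{1/2}$ and $|V|^{1/2}$ factors attached symmetrically to the two copies of $R_0(z)$ and absorbing $(I + |V|^{1/2}R_0(z)V^{1/2})^{-1}$ as the bounded middle term, exactly as above. Everything else is a direct citation: Kato--Rosenblum for existence and completeness, and the intertwining relations $\Omega_\pm H_0 = H \Omega_\pm$ (on the appropriate domains) for the unitary equivalence of the continuous parts.
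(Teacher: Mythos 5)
Your proposal is correct and follows essentially the same route as the paper, which disposes of the lemma by citing \cite[Theorem 4.12, Chapter X]{Kato66} and, alternatively, the Kuroda--Birman theorem \cite[Theorem XI.9]{RS}; you usefully supply the trace-class verification the paper leaves implicit, via the symmetrized resolvent identity with the two Hilbert--Schmidt outer factors $R_0(z)V^{1/2}$ and $|V|^{1/2}R_0(z)$ sandwiching the bounded inverse from Lemma \ref{L:inverse}. One correction: the abstract theorem you need for a trace-class \emph{difference of resolvents} is the Kuroda--Birman theorem \cite[Theorem XI.9]{RS}, not the Kato--Rosenblum theorem \cite[Theorem XI.8]{RS}; the latter requires $H-H_0=V$ itself to be trace class, which fails for multiplication by an $L^1\cap L^2$ function. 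With that citation fixed, the rest of your argument (completeness, intertwining, unitary equivalence of the continuous parts) goes through as stated.
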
  

\begin{proof} This lemma follows from \cite[Theorem 4.12, chapter X]{Kato66}.   
Alternatively, one can also derive the existence and completeness of $\Om_\pm$ from  
a pattern of  Kuroda-Birman's theorem   
\cite[Theorem XI.9]{RS}.   
\end{proof} 


\begin{theorem}\label{th:Omega} Suppose $V\in L^1\cap L^2(\R)$ is real-valued. The following scattering properties hold.  
\begin{enumerate}
\item[(a)] If $\phi\in \sH_{ac}$, then there exists $\eta_\mp\in \sH_{ac}$ such that 
\begin{align*}
&U(t)\phi = \Om_\mp U_0(t)\phi  +U(t)\eta_\mp\qquad  \text{in $L^2$} 
\end{align*} 
\item[(b)] For any $\phi\in \sH_{ac}$, there exist $\phi_\pm \in \sH=L^2$ such that 
\begin{align*}
 &U(t)\phi =  U_0(t) {\phi_\pm}  +o(1)\quad in \  L^2  \quad as \ t\to \pm\iy \,.
\end{align*}
Moreover,
\begin{align*}
 &U(t)\phi (x)= \frac{1}{\sqrt{2it}} e^{i \frac{x^2}{4t}} \widehat{\phi_\pm}(\pm\frac{x}{2|t|})  +o(1)\quad in \  L^2  \quad as \ t\to \pm\iy \,
\end{align*} 
where $  \phi_\mp=\Om_\mp^* \phi\,$. 
\end{enumerate} 
\end{theorem}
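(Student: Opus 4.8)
The plan is to reprove Theorem~\ref{t:wave:scatter} in the $L^1\cap L^2$ setting, with the finite-measure machinery of \S\ref{s:preliminV}--\S\ref{s:perturb:F} replaced by the $L^1\cap L^2$ machinery built in \S\ref{s:L2}--\S\ref{s:Om:F:surj}; structurally the two proofs are identical. The inputs I would invoke are: Lemma~\ref{l:exist:ac}, which provides the existence and asymptotic completeness of $\Om_\pm$ and the unitary equivalence of the absolutely continuous parts of $H$ and $H_0$; Proposition~\ref{c:wave:F*F0}, which supplies $\Om_\mp=\cF_\pm^*\cF_0$, $\cF_\pm\cF_\pm^*=I$, $\cF_\pm^*\cF_\pm=P_{ac}$ and $\Om_\mp\Om_\mp^*=P_{ac}$; Theorem~\ref{th:Plancherel}(d) together with Remark~\ref{re:E(xxi)pm:LS}, which yields the functional-calculus representation $U(t)\phi=\cF_\pm^*e^{-it\xi^2}\cF_\pm\phi$ for $\phi\in\sH_{ac}$; and Proposition~\ref{Th:F-surj}, by which $\cF_\pm\colon\sH_{ac}\to L^2$ is a bijective isometry.

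For part (a) I would compute, using $\Om_\mp=\cF_\pm^*\cF_0$, $\cF_0U_0(t)=e^{-it\xi^2}\cF_0$ and $\cF_0\cF_0^*=I$,
\[
\Om_\mp U_0(t)\phi=\cF_\pm^*\cF_0\cF_0^*e^{-it\xi^2}\cF_0\phi=\cF_\pm^*e^{-it\xi^2}\cF_0\phi,
\]
and subtract this from $U(t)\phi=\cF_\pm^*e^{-it\xi^2}\cF_\pm\phi$, obtaining $U(t)\phi-\Om_\mp U_0(t)\phi=\cF_\pm^*e^{-it\xi^2}(\cF_\pm-\cF_0)\phi$. Since $(\cF_\pm-\cF_0)\phi\in L^2$ and $\cF_\pm$ maps $\sH_{ac}$ onto $L^2$ by Proposition~\ref{Th:F-surj}, there is $\eta_\mp\in\sH_{ac}$ with $\cF_\pm\eta_\mp=(\cF_\pm-\cF_0)\phi$ (explicitly $\eta_\mp=\cF_\pm^*(\cF_\pm-\cF_0)\phi$); hence $U(t)\phi-\Om_\mp U_0(t)\phi=\cF_\pm^*e^{-it\xi^2}\cF_\pm\eta_\mp=U(t)\eta_\mp$, which is (a).

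For part (b) I would set $\phi_\pm:=\Om_\pm^*\phi\in L^2$; since $\phi\in\sH_{ac}$ and $\Om_\pm\Om_\pm^*=P_{ac}$, we have $\Om_\pm\phi_\pm=\phi$. The strong-limit definition of $\Om_\pm$ (which uses the limit $t\to\mp\infty$) then gives $\|e^{-itH}e^{itH_0}\phi_\pm-\phi\|_{L^2}\to0$ as $t\to\mp\infty$, and applying the unitary $e^{itH}$ and reflecting $t\mapsto-t$ turns this into $U(t)\phi=U_0(t)\phi_\pm+o(1)$ in $L^2$ as $t\to\pm\infty$; this step uses only the existence of $\Om_\pm$ from Lemma~\ref{l:exist:ac}. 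The refined asymptotics then follow by inserting $f=\phi_\pm$ into the standard Fraunhofer formula $U_0(t)f(x)=\tfrac{1}{\sqrt{2it}}e^{ix^2/4t}\widehat f(\pm x/2|t|)+o(1)$ in $L^2$ as $t\to\pm\infty$, with the branch $\Re\sqrt z\ge0$, cf.\ \cite[Theorem IX.31]{RS}, exactly as in the proof of Theorem~\ref{t:wave:scatter}(b).

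I do not anticipate a real obstacle, since all the spectral and mapping-theoretic inputs have been established in \S\ref{s:L2}--\S\ref{s:Om:F:surj}; the only points requiring care are the bookkeeping of which wave operator pairs with which time direction (so that the $\pm/\mp$ labels in (a) and (b) come out consistently, given that $\Om_\mp$ is defined through the limit $t\to\pm\infty$), and the check that the functional-calculus identity $U(t)\phi=\cF_\pm^*e^{-it\xi^2}\cF_\pm\phi$ is valid on all of $\sH_{ac}$, which it is by $\cF_\pm^*\cF_\pm=P_{ac}$ and the uniqueness part of the spectral theorem.
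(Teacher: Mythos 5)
Your proposal is correct and is essentially the paper's intended argument: the paper does not write out a separate proof of Theorem \ref{th:Omega} but explicitly defers to the proof of Theorem \ref{t:wave:scatter}, with the inputs replaced by Lemma \ref{l:exist:ac}, Proposition \ref{c:wave:F*F0} ($\Om_\mp=\cF_\pm^*\cF_0$, $\cF_\pm\cF_\pm^*=I$), Proposition \ref{Th:F-surj} and the functional-calculus representation $U(t)=\cF_\pm^*e^{-it\xi^2}\cF_\pm$ on $\sH_{ac}$, which is exactly what you do. Your sign bookkeeping ($\phi_\pm=\Om_\pm^*\phi$ paired with $t\to\pm\infty$) and your explicit choice $\eta_\mp=\cF_\pm^*(\cF_\pm-\cF_0)\phi\in\sH_{ac}$ are both consistent with the paper's conventions.
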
 

 According to Lemma \ref{l:exist:ac} and Proposition \ref{c:wave:F*F0}, the wave operators $\Om_\mp$ verify (\ref{eWav:pm}):
\begin{align*}
&\Omega_\mp f: = \lim_{t \rightarrow \mp\infty} e^{itH} e^{-itH_0} f\, = \cF^*_\pm \cF_0\,, 
\end{align*} 
so that $\ran \Om_\mp=\sH_{ac}$. It follows that  
and the perturbed plane wave is formally given by   $e_\pm(x,\xi)=\Om_\mp e^{\pm i kx} $. Moreover, there are the intertwining properties: 
  \begin{enumerate}
\item[(i)] $R(\zeta)\Om_\mp= \Om_\mp R_0(\zeta)$ \quad for $\zeta\in \C\setminus [-\lam_0, +\iy)$, 
where $-\lam_0=\inf \sigma(H)$, the infimum of the spectrum of $H$.  
\item[(ii)] $\vphi(H)\Om_\mp= \Om_\mp \vphi(H_0)$.
\end{enumerate} 
The scattering matrix $S=\Om^*_+\Om_-$  is unitary.  
Examples on the spectral and scattering properties  of wave operators and $S$-matrix that include sign-changing barrier potential,  Coulomb potential, nonlocal $\de$ and $\de'$ interactions  can be found in e.g. \cite{BZ2010, EcKMTe2014,EgHoTe2015,GlowKu2020,Teschl2014b}. 
\begin{remark} For $V$ in $L^1\cap L^2$, the proofs in Lemmas \ref{l:e(xxi):E0}, \ref{lem:F-Green} and \ref{lem:Phi} do not show the (uniform) 
$L^\iy$ boundedness or integrability in the $\xi$-variable for $e(x,\xi)$. However, in light of Remark \ref{e(xxi):alpha-beta},
 $e(x,\xi)$ is uniformly bounded  for $x \in \R$,  $\alpha \leq \vert \xi \vert \leq \beta$ 
with $ 0 < \alpha < \beta$ and $[\alpha,\beta] \cap {\cal E}_0^2 =\emptyset$.
This observation allows us to verify (\ref{Psi(k):cE0}) for all functions $f\in \cD(H_0)$ with support away from $\cE$. 
As a consequence, we can strengthen  the property (i) for all  $\zeta\in\C\setminus \cE^2$ by a density argument. 
\end{remark}

\vspace{.2in}
 
\noindent 
{\em Acknowledgments} 
The author  thanks \mbox{Michael Goldberg} for valuable comments regarding the dispersive estimates for measure-valued potentials in weighted $L^1$
class. {The author also thanks the referees for careful reading of the manuscript and providing helpful comments}.  
The research  of the  author is supported in part by  {NSFC} 12071323 as Co-PI.

\end{document}